\newcommand{\sG}{\mathcal{G}}
\newcommand{\sH}{\mathcal{H}}
\newcommand{\sI}{\mathcal{I}}
\newcommand{\sL}{\mathcal{L}}
\newcommand{\sM}{\mathcal{M}}
\newcommand{\sO}{\mathcal{O}}
\newcommand{\sX}{\mathcal{X}}
\newcommand{\sY}{\mathcal{Y}}
\newcommand{\bX}{\mathbb{X}}
\newcommand{\bL}{\mathbb{L}}
\newcommand{\bN}{\mathbb{N}}
\newcommand{\bZ}{\mathbb{Z}}
\newcommand{\bA}{\mathbb{A}}
\newcommand{\bB}{\mathbb{B}}
\newcommand{\bP}{\mathbb{P}}
\newcommand{\fm}{\mathfrak{m}}
\newcommand{\fn}{\mathfrak{n}}
\newcommand{\fC}{\mathfrak{C}}
\newcommand{\fx}{\mathfrak{x}}
\newcommand{\fv}{\mathfrak{v}}
\newcommand{\fl}{\mathfrak{l}}
\newcommand{\fs}{\mathfrak{s}}
\newcommand{\Spec}{\operatorname{Spec}}
\newcommand{\Ker}{\operatorname{Ker}}
\newcommand{\ord}{{\operatorname{ord}}}
\newcommand{\Mor}{\operatorname{Mor}}
\renewcommand{\bB}{\mathbb{B}}
\newcommand{\Arc}{\operatorname{\bf Arc}}
\newcommand{\Form}{\operatorname{\bf Form}}
\newcommand{\op}{{\text{\rm op}}}
\newcommand{\by}[1]{\overset{#1}{\longrightarrow}}
\newcommand{\iso}{\by{\sim}}
\newcommand{\inj}{\hookrightarrow}
\newcommand{\surj}{\rightarrow\!\!\!\!\!\rightarrow}
\newcommand{\colim}{\varinjlim}
\renewcommand{\lim}{\varprojlim}
\renewcommand{\op}{\operatorname}
\renewcommand \dim[1]{\mbox{dim}{#1}}
\newcommand\sch[1]{\categ{Sch}_{#1}}
\newcommand\set{\mathbf{Set}}
\newcommand\im[1]{\operatorname{im}({#1})}
\newcommand\spec[1]{\operatorname{Spec}(#1)}
\newcommand\fatpoints[1]{\categ {Fat}_{#1}}
\newcommand\categ[1]{\mathbbmss{#1}}
\newcommand\into{\hookrightarrow}
\newcommand\sieves[1]{\categ{Sieve}_{#1}}
\newcommand\fim[1]{\texttt{Im}(#1)}
\newcommand \lef{{\mathbb L}}
\newcommand\func[1]{#1^\circ}
\newcommand \grot[1]{{\mathbf {Gr}(#1)}}
\newcommand \nat{\mathbb N}
\newcommand \onto{\twoheadrightarrow}
\newcommand \mor[3]{\op{Mor}_{#1}(#2,#3)}
\newcommand \zet{\mathbb Z}
\newcommand \complet[1]{\widehat {#1}}
\def\opn#1#2{\def#1{\mathop{\kern0pt\fam0#2}\nolimits}} 
\def\underrightarrow{\mathpalette\underrightarrow@}
\def\underrightarrow@#1#2{\vtop{\ialign{$##$\cr
 \hfil#1#2\hfil\cr\noalign{\nointerlineskip}%
 #1{-}\mkern-6mu\cleaders\hbox{$#1\mkern-2mu{-}\mkern-2mu$}\hfill
 \mkern-6mu{\to}\cr}}} 
\def\underleftarrow{\mathpalette\underlefalwaystarrow@}
\def\underleftarrow@#1#2{\vtop{\ialign{$##$\cr
 \hfil#1#2\hfil\cr\noalign{\nointerlineskip}#1{\leftarrow}\mkern-6mu
 \cleaders\hbox{$#1\mkern-2mu{-}\mkern-2mu$}\hfill
 \mkern-6mu{-}\cr}}}
    \let\phi=\varphi
    \let\epsilon=\varepsilon  
\def\:{\colon}   
\let\oldtilde=\tilde
\def\tilde#1{\mathchoice{\widetilde{#1}}{\widetilde{#1}}%
{\indextil{#1}}{\oldtilde{#1}}}
\def\indextil#1{\lower2pt\hbox{$\textstyle{\oldtilde{\raise2pt%
\hbox{$\scriptstyle{#1}$}}}$}}
\def\pnt{{\raise1.1pt\hbox{$\textstyle.$}}}  
\let\amp@rs@nd@\relax
\newdimen\ex@
\newdimen\bigaw@
\newdimen\minaw@
\newdimen\minCDaw@  
\newif\ifCD@
\def\minCDarrowwidth#1{\minCDaw@#1}
\renewenvironment{CD}{\@CD}{\@endCD}
\def\@CD{\def\A##1A##2A{\llap{$\vcenter{\hbox
 {$\scriptstyle##1$}}$}\Big\uparrow\rlap%
{$\vcenter{\hbox{$\scriptstyle##2$}}$}&&}%
\def\V##1V##2V{\llap{$\vcenter{\hbox
 {$\scriptstyle##1$}}$}\Big\downarrow\rlap%
{$\vcenter{\hbox{$\scriptstyle##2$}}$}&&}%
\def\={&\hskip.5em\mathrel
 {\vbox{\hrule width\minCDaw@\vskip3\ex@\hrule width
 \minCDaw@}}\hskip.5em&}%
\def\verteq{\Big\Vert&&}%
\def\noarr{&&}%
\def\vspace##1{\noalign{\vskip##1\relax}}%
\relax\let\amp@rs@nd@&\iffalse}\fi
\def\@endCD{\cr\egroup\egroup}
\def\>#1>#2>{\amp@rs@nd@\setbox\z@\hbox{$\scriptstyle
 \;{#1}\;\;$}\setbox\@ne\hbox{$\scriptstyle\;{#2}\;\;$}\setbox\tw@
 \hbox{$#2$}\ifCD@
 \global\bigaw@\minCDaw@\else\global\bigaw@\minaw@\fi
 \ifdim\wd\z@>\bigaw@\global\bigaw@\wd\z@\fi
 \ifdim\wd\@ne>\bigaw@\global\bigaw@\wd\@ne\fi
 \ifCD@\hskip.5em\fi
 \ifdim\wd\tw@>\z@
 \mathrel{\mathop{\hbox to\bigaw@{\rightarrowfill}}\limits^{#1}_{#2}}\else
 \mathrel{\mathop{\hbox to\bigaw@{\rightarrowfill}}\limits^{#1}}\fi
 \ifCD@\hskip.5em\fi\amp@rs@nd@}
\def\<#1<#2<{\amp@rs@nd@\setbox\z@\hbox{$\scriptstyle
 \;\;{#1}\;$}\setbox\@ne\hbox{$\scriptstyle\;\;{#2}\;$}\setbox\tw@
 \hbox{$#2$}\ifCD@
 \global\bigaw@\minCDaw@\else\global\bigaw@\minaw@\fi
 \ifdim\wd\z@>\bigaw@\global\bigaw@\wd\z@\fi
 \ifdim\wd\@ne>\bigaw@\global\bigaw@\wd\@ne\fi
 \ifCD@\hskip.5em\fi
 \ifdim\wd\tw@>\z@
 \mathrel{\mathop{\hbox to\bigaw@{\leftarrowfill}}\limits^{#1}_{#2}}\else
 \mathrel{\mathop{\hbox to\bigaw@{\leftarrowfill}}\limits^{#1}}\fi
 \ifCD@\hskip.5em\fi\amp@rs@nd@}
\def\@CDS{\def\A##1A##2A{\llap{$\vcenter{\hbox
 {$\scriptstyle##1$}}$}\Big\uparrow\rlap%
{$\vcenter{\hbox{$\scriptstyle##2$}}$}&}%
\def\V##1V##2V{\llap{$\vcenter{\hbox
 {$\scriptstyle##1$}}$}\Big\downarrow\rlap%
{$\vcenter{\hbox{$\scriptstyle##2$}}$}&}%
\def\={&\hskip.5em\mathrel
 {\vbox{\hrule width\minCDaw@\vskip3\ex@\hrule width
 \minCDaw@}}\hskip.5em&}
\def\verteq{\Big\Vert&}
\def\novarr{&}
\def\noharr{&&}
\def\SE##1E##2E{\slantedarrow(0,18)(4,-3){##1}{##2}&}
\def\SW##1W##2W{\slantedarrow(24,18)(-4,-3){##1}{##2}&}
\def\NE##1E##2E{\slantedarrow(0,0)(4,3){##1}{##2}&}
\def\NW##1W##2W{\slantedarrow(24,0)(-4,3){##1}{##2}&}
\def\slantedarrow(##1)(##2)##3##4{\thinlines\unitlength1pt%
\lower 6.5pt\hbox{\begin{picture}(24,18)%
\put(##1){\vector(##2){24}}%
\put(0,8){$\scriptstyle##3$}%
\put(20,8){$\scriptstyle##4$}%
\end{picture}}}
\def\vspace##1{\noalign{\vskip##1\relax}}\relax%
\let\amp@rs@nd@&\iffalse}\fi
\def\@endCDS{\cr\egroup\egroup}
\theoremstyle{plain} 
\newtheorem{theorem}{\indent\sc Theorem}[section]
\newtheorem{lemma}[theorem]{\indent\sc Lemma}
\newtheorem{corollary}[theorem]{\indent\sc Corollary}
\newtheorem{proposition}[theorem]{\indent\sc Proposition}
\newtheorem{conjecture}[theorem]{\indent\sc Conjecture}
\theoremstyle{definition} 
\newtheorem{definition}[theorem]{\indent\sc Definition}
\newtheorem{remark}[theorem]{\indent\sc Remark}
\newtheorem{example}[theorem]{\indent\sc Example}
\newtheorem{question}[theorem]{\indent\sc Question}
\def\address#1#2{\begingroup
\noindent\parbox[t]{7.8cm}{%
\small{\scshape\ignorespaces#1}\par\vskip1ex
\noindent\small{\itshape astout@gc.cuny.edu}%
\/: #2\par\vskip4ex}\hfill%
\endgroup}%
\begin{document}

\title{Arc stability and schemic motivic integration}

\author{Andrew R. Stout}

\email{astout@gc.cuny.edu}

\maketitle

\begin{abstract}
We develop the notion of a functorial motivic volume relative to any complete noetherian local ring, which takes into account the nilpotent structure of a scheme of finite type over a field. We show how the schemic motivic measure encodes data from the local deformations of a scheme. We offer some preliminary applications such as rationality results on the  motivic poincar\'{e} series. We postulate that there is an unknown algebraic condition on the theory of local artinian rings which will force the reduction of the auto-arcs to be affine $n$-space. This is a foundational open problem for schemic motivic integration.
\end{abstract}
\setcounter{tocdepth}{1} \tableofcontents{}

\section*{Introduction} 
In \cite{Sch1} and \cite{Sch2}, Schoutens introduced the possibility of developing a topos-theoretic analogue of motivic integrals. However, the integrals defined therein are finite in nature. The main thesis of this paper is that one can naturally extend the notion of geometric motivic integration to the setting of schemic grothendieck rings. Moreover, it  is shown how to reduce these schemic integrals to the usual notion of an integral in geometric motivic integration.

In Section 1, we introduce some background for understanding the topos theoretic approach. In Section 2, we introduce some notions of stability for schemes which are important for evaluating a schemic integral, and we show how stability for a scheme naturally gives rise to a measure on the generalizd arc space. What is of note here is that the integals which take place with respect to this measure take into account the nilpotent structure of the underlying scheme. 
In Section 3, we push the material further by showing how stability is connected with ideas coming from deformation theory. Furthermore, in this section, we make it clear how to relate the schemic integral to the geometric motivic integral. Finally, in this section, we also discuss a couple of applications.

\section{Background}\label{finite}

We now quickly give an introduction to Schoutens' theory of finite schemic integration. Much of what is stated here is taken directly from \cite{Sch1} and \cite{Sch2}. Let $\sch\kappa$ be the category of separated schemes of finite type over a field $\kappa$. We form the Grothendieck topology $\categ{Form}_{\kappa}$ on $ \sch\kappa$ in the following way.

\begin{definition}\label{morofsieves}
Given two sieves $\sX$ and $\sY$, we say
that a natural transformation $\nu : \sY \to \sX$ is {\it a morphism of
 sieves} if
given any morphism of schemes $\varphi : Z \to Y$ such that
$\im{\func{\varphi}} \subset \sY$,
there exists a morphism of schemes $\psi : Z \to X$ with $\sX
\subset X$ such that the following diagram commutes 
\[\xymatrix{\entrymodifiers={+!!<0pt,\fontdimen22\textfont2>}&Z^{\circ}  \ar[d]_{\func{\varphi}}  \ar[rrd]^{\func{\psi}}\\&\sY \ar[r]_{\nu} &\sX \ar[r]_{\iota}&X^{\circ} } \]
where $\iota$ is the natural inclusion defining $\sX$ as a subfunctor of $X^{\circ} := \mbox{hom}_{\sch\kappa}(-,X)$.
This forms a category which we again denote by $\sieves\kappa$.
\end{definition}

We say that a sieve $\sY$ is {\it subschemic} if it is of the form $\fim{\varphi^{\circ}}$ where $\varphi : X \to Y$ is a morphism in $\sch{\kappa}$. The collection of subschemic sieves satisfies the axioms of a Grothendieck pre-topology; however, it is not all that interesting as we have the following theorem due to Schoutens:

\begin{theorem}
Let $\nu : \sY \to \sX$ be a morphism in $\sieves\kappa$ and assume that $\sX$ and $\sY$ are subschemic. Then, $\nu$ is rational -- i.e., there exists a morphism $\varphi:Y \to X$ in $\sch{\kappa}$ such that 
\begin{equation}
\varphi^{\circ} \circ\iota = \nu \ ,
 \end{equation}
where $\iota: \sY \into Y^{\circ}$ is a natural inclusion.
\end{theorem}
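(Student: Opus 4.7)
Since both sieves are subschemic, fix presentations $\sY = \fim{\func{\alpha}}$ and $\sX = \fim{\func{\beta}}$ with scheme morphisms $\alpha \colon Y' \to Y$ and $\beta \colon X' \to X$, and write $\iota_Y \colon \sY \into Y^{\circ}$, $\iota_X \colon \sX \into X^{\circ}$ for the tautological inclusions. The morphism $\alpha$ lies in $\sY(Y')$, so it produces a canonical factorization $\bar\alpha \colon \func{Y'} \to \sY$ with $\iota_Y \circ \bar\alpha = \func{\alpha}$. Since $\im{\func{\alpha}} = \sY$ tautologically, I invoke Definition~\ref{morofsieves} with test morphism $\varphi = \alpha$ to obtain a scheme morphism $\psi \colon Y' \to X$ satisfying
\[
\func{\psi} \;=\; \iota_X \circ \nu \circ \bar\alpha.
\]

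The next step is to descend $\psi$ along $\alpha$ to a morphism $\varphi \colon Y \to X$. The required compatibility is automatic: if $g_1, g_2 \colon T \to Y'$ satisfy $\alpha g_1 = \alpha g_2$, then $\bar\alpha \circ \func{g_i}$ coincide (they agree after postcomposition with the monomorphism $\iota_Y$), hence $\func{\psi} \circ \func{g_i}$ coincide, so $\psi g_1 = \psi g_2$. Thus $\psi$ is compatible with the equivalence relation $Y' \times_Y Y' \rightrightarrows Y'$. Assuming one can arrange $\alpha$ to be an effective descent morphism---which one may enforce by replacing $Y$ with the scheme-theoretic image of $\alpha$ (this does not alter $\sY$) and choosing a convenient representative in its class of presentations---this yields $\varphi$ with $\varphi \circ \alpha = \psi$. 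A bookkeeping chase using $\func{\alpha} = \iota_Y \circ \bar\alpha$ and the surjectivity of $\bar\alpha$ onto $\sY$ then converts $\func{\varphi} \circ \func{\alpha} = \func{\psi} = \iota_X \circ \nu \circ \bar\alpha$ into $\func{\varphi} \circ \iota_Y = \iota_X \circ \nu$, i.e.\ $\func{\varphi} \circ \iota = \nu$ in the sense of the statement.

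\textbf{Main obstacle.} The delicate point is the descent. A priori $\alpha$ need not be faithfully flat, and a scheme morphism $\psi \colon Y' \to X$ whose $T$-point action is constant on $\alpha$-fibres need not come from a scheme morphism $Y \to X$. I expect this is handled either by exploiting the freedom in choosing the presentation of $\sY$ to reduce to an effectively descendable $\alpha$, or by applying Definition~\ref{morofsieves} to a cofinal family of test morphisms $Z \to Y$ whose images land in $\sY$, producing compatible scheme morphisms $Z \to X$ which assemble, via a Zariski patching argument on $Y$, into the required $\varphi \colon Y \to X$.
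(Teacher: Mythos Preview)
The paper does not actually supply a proof of this theorem: it is stated in the background section as ``the following theorem due to Schoutens,'' with the argument deferred to \cite{Sch1}. So there is no in-paper proof to compare against; I can only assess your proposal on its own terms.

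Your reduction to the descent problem is correct, and you are right to flag it as the crux. But the gap is real, and neither of the fixes you sketch closes it. Replacing $Y$ by the scheme-theoretic image of $\alpha$ makes $\alpha$ scheme-theoretically dominant, but that is far weaker than effective descent: the normalization $\bA^1 \to \Spec \kappa[t^2,t^3]$ of the cuspidal cubic is scheme-theoretically surjective, yet the identity $\bA^1 \to \bA^1$ does not descend along it, even though it is injective on $T$-points for reduced $T$. Your compatibility check (that $\psi g_1 = \psi g_2$ whenever $\alpha g_1 = \alpha g_2$) is valid, but compatibility with the kernel pair is only sufficient for descent when $\alpha$ is an effective epimorphism in $\sch\kappa$, and a generic presentation of a subschemic sieve has no reason to be one. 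The phrase ``choosing a convenient representative in its class of presentations'' hides the entire difficulty: you would need to show that \emph{every} subschemic sieve admits a presentation by an effective descent morphism with the \emph{same} target $Y$, and you have not argued this. The Zariski-patching alternative is likewise underspecified---Definition~\ref{morofsieves} hands you a $\psi_Z$ for each test object $Z$, but there is no evident open cover of $Y$ by such $Z$'s landing in $\sY$ unless $\sY = Y^\circ$.

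If you want to pursue this, the likely ingredients from \cite{Sch1} are the fat-point detection principle (the paper's Theorem restating Lemma~2.2 of \cite{Sch1}) together with a structural result about subschemic sieves that pins down a canonical ambient $Y$; the argument probably does not go through naive descent along an arbitrary $\alpha$.
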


However, there is a large class of sieves which do not have this property. Recall the construction of a formal scheme. One starts with a closed subscheme $Y$ of $X$ with corresponding ideal sheaf $\sI_Y$. For each $n \in \nat$, $\sI_{Y}^{n}$ is a quasi-coherent sheaf of ideals of $\sO_{X}$. Thus, we have the closed subscheme $Y_n$ of $X$ determined by the ideal sheaf $\sI_{Y}^{n}$. Then, the formal scheme of $X$ along $Y$ is the  locally ringed topological space $\complet Y$ which is isomorphic to $\colim_{n\in\nat} Y_n$. This leads us to make the following definition.
\begin{definition}
We say that a sieve $\sX$ is {\t formal} if for each connected finite ${\kappa}$-scheme $\fm$, there is a subschemic sieve $\sY_{\fm}\subset\sX$ such that the sets
$\sY_{\fm}(\fm)$ and $\sX(\fm)$ are equal.
\end{definition}
In Theorem $7.8$ of \cite{Sch1}, Schoutens proved that the collection of all formal sieves, denoted by $\categ{Form}_{\kappa}$ is a Grothendieck pre-topology. It can be shown as well that categorical product and coproduct commute in the full subcategory  $\categ{Form}_{\kappa}$ of $\sieves\kappa.$ Thus, we may form the Grothendieck ring of  $\categ{Form}_{\kappa}$ . We denote the resulting ring by  $\grot{\categ{Form}_{\kappa}}$ and call it {\it the Grothendieck ring of the formal motivic site}. There is a surjective ring homomorphism
\begin{equation}
\grot{\categ{Form}_{\kappa}} \onto \grot {\mathbf{Var}_{\kappa}} \ .
\end{equation}

In motivic integration, one often deals with the arc space $\mathcal{L}(X)$ which is the projective limit of the  $n$-th order arc spaces. The truncated arc space $\sL_{n}(X)$ is defined to be the separated scheme of finite type over $\kappa$ representing the functor from connected $\kappa$-schemes which are finite over $\kappa$ to $\set$:
\begin{equation}
\fm \mapsto X^{\circ}(\fm \times_{\kappa}\spec{\kappa[t]/(t^{n})}) \ .
\end{equation}
Usually, one only considers the reduced structure on $\sL(X)$.

Let $\fatpoints\kappa$ be the full subcategory of $\sch\kappa$ whose objects are connected finite $\kappa$-schemes. We call $\fm \in\fatpoints\kappa$ a {\it fat point} over $\kappa$. All sieves $\sX$ restrict to $\fatpoints\kappa$ . We will abuse notation and denote the restriction of a sieve $\sX$ to $\fatpoints\kappa$ as $\sX$ as well. Moreover, we will denote the resulting category of all sieves $\sX$ restricted to $\fatpoints\kappa$ by $\sieves\kappa$.
The reason that we may perform this restriction is due to the following fact.

\begin{theorem}
Let $X$ and $Y$ be closed subschemes contained in a separated $\kappa$-scheme $Z$ of finite type. Then, $X$ and $Y$ are non-isomorphic over $\kappa$ if and only if there exists $\fm\in\fatpoints\kappa$ such that  $X^{\circ}(\fm)$ and $Y^{\circ}(\fm)$
are distinct subsets of $Z^{\circ}(\fm)$.
\end{theorem}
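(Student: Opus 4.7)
The plan for the \emph{if} direction is immediate: if some fat point $\fm$ distinguishes $X^\circ(\fm)$ from $Y^\circ(\fm)$ inside $Z^\circ(\fm)$, then the subfunctors $X^\circ$ and $Y^\circ$ of $Z^\circ$ are already distinct on $\fatpoints\kappa$, and in particular $X$ and $Y$ cannot coincide as closed subschemes of $Z$, so they are non-isomorphic in the category of closed subschemes of $Z$ over $\kappa$.

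For the converse, I will reduce to a local calculation. Writing $\sI_X, \sI_Y \subset \sO_Z$ for the defining ideal sheaves, non-equality forces $\sI_X \neq \sI_Y$, and, after possibly swapping the roles of $X$ and $Y$, I may choose an affine open $U = \Spec R \subset Z$ and, setting $I := \sI_X(U)$ and $J := \sI_Y(U)$, an element $f \in J$ whose image $\bar f \in R/I$ is nonzero. The strategy then is to construct a fat point $\fm = \Spec A$ and a morphism $\phi \colon \fm \to Z$ that factors through $X$ (so that $I$ is annihilated in $A$) but not through $Y$ (so that $f$ is sent to a nonzero element).

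To produce $\fm$, I would use that $\Spec(R/I)$ is a Jacobson scheme of finite type over $\kappa$: the nonempty closed subset $V(\operatorname{Ann}(\bar f))$ contains a closed point, yielding a maximal ideal $\fn \subset R$ with $I \subset \fn$ and $\bar f$ still nonzero in the localization $(R/I)_\fn$. Krull's intersection theorem then supplies an integer $n$ with $\bar f \notin \fn^n(R/I)_\fn$, and I set $A := (R/I)/\fn^n$. Since the only maximal ideal of $R/I$ containing $\fn^n$ is $\fn$ itself, the ring $A$ is automatically local and coincides with $(R/I)_\fn/\fn^n(R/I)_\fn$. Its residue field $(R/I)/\fn$ is a finite extension of $\kappa$ by the Nullstellensatz, so $A$ is finite-dimensional over $\kappa$ and $\fm := \Spec A$ is a genuine fat point. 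The composite surjection $R \twoheadrightarrow R/I \twoheadrightarrow A$ then determines a morphism $\phi$ lying in $X^\circ(\fm) \setminus Y^\circ(\fm) \subset Z^\circ(\fm)$, as required.

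The main technical obstacle is guaranteeing that $A$ is genuinely finite-dimensional over $\kappa$ rather than merely Artinian over its own residue field; this rests on the combination of Krull's intersection theorem (supplying artinianness of the quotient), the Nullstellensatz (ensuring finiteness of the residue field over $\kappa$), and the elementary fact that quotienting by a power of a maximal ideal already produces a local ring, so no extra localization step is needed to land in $\fatpoints\kappa$.
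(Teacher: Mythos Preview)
Your argument is correct. The paper itself does not supply a proof of this statement: it simply records that the result is a restatement of Lemma~2.2 of \cite{Sch1} and refers the reader there. So there is no in-paper proof to compare against; what you have written is a complete and self-contained argument where the paper only gives a citation.

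Your approach is the natural one and almost certainly coincides in spirit with Schoutens' original: reduce to an affine chart, pick an element $f$ witnessing $I \neq J$, and produce a thickened closed point of $\Spec(R/I)$ on which $f$ survives. The only place worth a second look is the meaning of ``fat point'' you are using. In Section~1, where this theorem appears, $\fatpoints\kappa$ is defined as the category of \emph{connected finite $\kappa$-schemes}; under that definition your $A=(R/I)/\fn^{n}$ qualifies, since it is local Artinian with residue field finite over $\kappa$ by the Nullstellensatz, hence finite-dimensional over $\kappa$. Later in the paper a second, stricter description is given ($\fm^{red}=\Spec\kappa$), but the paper itself flags that the two agree only when $\kappa$ is algebraically closed, and the theorem you are proving sits under the first definition. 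So your verification that $\Spec A\in\fatpoints\kappa$ is exactly what is needed here.
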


\begin{proof}
This is a restatement of Lemma 2.2 of \cite{Sch1}. A proof can be found there.
\end{proof}

One of the insights of Schoutens was that the construction of the arc space works just as well when we replace $\spec{\kappa[t]/(t^{n})}$ with an arbitrary fat point $\fn$. This leads us to define the generalized arc space of a sieve $\sX$ along the fat point $\fn$ by
\begin{equation}\label{arcop}
\nabla_{\fn}\sX(-) := \sX(-\times_{\kappa}\fn)
\end{equation}
as a functor from $\fatpoints\kappa$ to $\set$. Schoutens proved in \S $3$ of \cite{Sch2} that if $\sX = X^{\circ}$ for some $X\in\sch\kappa$, then it follows that $\nabla_{\fn}\sX$ is a represented by an element of $\sch\kappa$. Thus, it follows immediately that $\nabla_{\fn}\sX \in \sieves\kappa$ for any $\sX \in \sieves\kappa$ and any $\fn\in\fatpoints\kappa$. Moreover, Schoutens showed that if $\sX$ is formal, then so is $\nabla_{\fn}\sX$. It is natural to define the (weightless) finite schemic measure to be
\begin{equation}
\mu_{\fn}(\sX) := [\nabla_{\fn}\sX]\lef^{-\dim{\nabla_{\fn}\sX}} \ 
\end{equation}
in the Grothendieck ring $\grot{\categ{Form}_{\kappa}}_{\lef}$ where $\lef = [\bA_{\kappa}^{1}]$.
More or less, the integrals which take place in \cite{Sch2} are finite sums of elements of $\grot{\categ{Form}_{\kappa}}_{\lef}$ of the form $\mu_{\fn}(\sX)$.

\section{Geometric  integration for schemes} 

In this section, we develop the theory of geometric schemic integration over certain infinite arcs and prove a weak change of variables formula for this theory. To prove this change of variables formula, we display a way to specialize this type of integration to the theory of geometric motivic integration for varieties as developed in \cite{DL1}. 

\subsection{Admissible arcs.}

Geometric motivic integration takes place over $\spec{\kappa[[t]]}$. Moreover, as we saw in the introduction, we would like to make use of the generalized arc operator. Thus, our first step in constructing a schemic integral is to form certain colimits of fat points, which we will term {\it arcs}. As a special case, we will recover the arc $\spec{\kappa[[t]]}$ used in the classical theory.
Fix a field $\kappa$.
 Let $R$ be a complete noetherian local ring with maximal ideal $\sM$. We define
the following:

\begin{itemize}\label{bullets}
\item $R_n : = R/\sM^n$ for $n\in\bN\setminus\{0\}$.
\item $\fs_n := (\Spec R_n, \sO_{\Spec R_n})$.
\item Let $\fs_{n-1} \inj \fs_{n}$ be the closed immersion defined by the
surjective ring homomorphism $R_{n} \surj R_{n-1}$.
\item Let $\bX$ denote the resulting directed system of schemes and assume that $\bX$ is not a finite set.
\item Let $\fx = \colim\bX$  denote the direct limit of this directed system in
the category of locally ringed spaces.
\item Note that $\fx$ is the local ringed space $(\{x\} , \sO_{\mathfrak{x}})$
where $\sO_{\mathfrak{x}}(\{x\}) = R$. 
\item ({\it Working over the field} $\kappa$) We will make the assumption that
$$\fs_n \in \sch{\kappa}$$ where $\sch{\kappa}$ is the category of separated schemes of finite type
over the field $\kappa$.
\item We denote by $\mathbf{Var}_{\kappa}$ the full sub-category of $\sch{\kappa}$ whose objects are
objects $X$ of $\sch{\kappa}$ such that $X = X^{red}$  where $X^{red}$ is the
reduction of $X$. We will call an object in $\mathbf{Var}_{\kappa}$ a {\it variety}.
\end{itemize}

 The category $\fatpoints{\kappa}$ of {\it fat points over the field } ${\kappa}$ is defined as the full sub-category of 
the category of $\sch{\kappa}$ formed by all $\fm \in \sch{\kappa}$ such that $\fm^{red} = \Spec {\kappa} $. 

\begin{theorem} Let $\fm \in \sch{\kappa}$. When $\kappa$ is algebraically closed, the following are equivalent.

\begin{enumerate}
\item $ \fm \in \fatpoints{\kappa}$.
\item $\fm$ is the spectrum of a local artinian ring with residue 
field ${\kappa}$.
 \item The underlying topological space of $\fm$ is homeomorphic to the one point topological space.
 \item $\fm$ is connected and of dimension $0$.
 \item $\fm$ is isomorphic to $J_{o}^{n} X$ for some closed subscheme $X = \Spec A$ of 
	$\bA_{{\kappa}}^{g}$, where $o \in X$ is a closed point corresponding to a maximal ideal $\sM$ of $A$ and 
	$J_{o}^{n} X := \Spec(A/\sM^{n})$ . 
 \end{enumerate}
\end{theorem}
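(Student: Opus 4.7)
The plan is to prove the equivalences by showing a short cycle of implications and a couple of direct equivalences, using standard commutative algebra together with the Nullstellensatz (which is where the hypothesis that $\kappa$ is algebraically closed enters).

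First I would establish $(1)\iff(2)$. For $(1)\Rightarrow(2)$, write $\fm=\Spec A$ with $A$ a finitely generated $\kappa$-algebra such that $A/\mathrm{nil}(A)\cong\kappa$. Since $A$ is noetherian, $\mathrm{nil}(A)$ is finitely generated and nilpotent, and each successive quotient $\mathrm{nil}(A)^i/\mathrm{nil}(A)^{i+1}$ is a finite-dimensional $\kappa$-vector space, so $A$ is a finite-dimensional $\kappa$-algebra; in particular it is artinian. The quotient $A/\mathrm{nil}(A)\cong\kappa$ shows that $\mathrm{nil}(A)$ is the unique prime of $A$, so $A$ is local with residue field $\kappa$. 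The converse $(2)\Rightarrow(1)$ is immediate: a local artinian ring $A$ with residue field $\kappa$ has nilpotent maximal ideal $\sM$, hence $A^{red}=A/\sM=\kappa$, and since $A$ is finite-dimensional over $\kappa$ it is of finite type.

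Next I would handle $(2)\iff(3)\iff(4)$. An artinian local ring has a unique prime ideal, so $|\fm|$ is a one-point space, giving $(2)\Rightarrow(3)$. A one-point space is trivially connected and of Krull dimension $0$, giving $(3)\Rightarrow(4)$. For $(4)\Rightarrow(2)$, if $\fm=\Spec A$ is connected and $0$-dimensional, every prime of $A$ is simultaneously minimal and maximal, so $A$ is a $0$-dimensional noetherian ring, hence artinian. By the structure theorem for artinian rings, $A$ decomposes as a finite product $\prod A_i$ of local artinian rings, and connectedness of $\Spec A$ forces a single factor, so $A$ is local artinian. Finally, since $A$ is of finite type over $\kappa$, each residue field $A/\sM$ is a finite extension of $\kappa$ by the Nullstellensatz, hence equal to $\kappa$ when $\kappa$ is algebraically closed, giving (2).

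It remains to handle $(5)$, which I would prove equivalent to $(2)$. For $(5)\Rightarrow(2)$, the ring $A/\sM^n$ is noetherian, has nilpotent maximal ideal $\sM/\sM^n$ (since $\sqrt{\sM^n}=\sM$), and every prime lies over $\sM$, so it is local artinian; its residue field is $A/\sM=\kappa$ by the Nullstellensatz. For $(2)\Rightarrow(5)$, given $\fm=\Spec B$ with $B$ local artinian and of finite type, choose a surjection $\kappa[x_1,\dots,x_g]\Surj B$ and let $A$ be the image, so that $X=\Spec A$ is a closed subscheme of $\bA_\kappa^g$ (in fact we may take $A=B$ itself). Let $\sM$ be the maximal ideal of $A$ corresponding to the unique closed point $o$, and choose $n$ so that $\sM^n=0$ in $A$ (possible since $A$ is artinian local); then $A/\sM^n=A\cong B$.

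There is no real obstacle here: the only place where a subtlety could arise is in $(4)\Rightarrow(2)$, where one must invoke both the structure theorem for artinian rings and the Nullstellensatz to pin down the residue field as $\kappa$; but both are standard, and the hypothesis that $\kappa$ is algebraically closed is used precisely there (and in the residue-field identification for $(5)\Rightarrow(1)$).
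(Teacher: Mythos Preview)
The paper states this theorem without proof, treating it as a collection of standard equivalences from commutative algebra; so there is nothing to compare against. Your argument is correct and is exactly the routine verification one would expect.

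One small point worth tightening: in your implication $(4)\Rightarrow(2)$ you begin with ``if $\fm=\Spec A$ is connected and $0$-dimensional'', but condition~(4) does not a priori assert that $\fm$ is affine. You should first observe that a $0$-dimensional scheme of finite type over a field has finite discrete underlying space, so connectedness forces it to be a single point, and a one-point scheme is affine (any affine open neighbourhood of the point is the whole scheme). After that your argument goes through verbatim. The same remark applies implicitly to $(1)\Rightarrow(2)$, where you write $\fm=\Spec A$: this is justified because $\fm^{red}=\Spec\kappa$ already forces the underlying space to be a point.
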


We let $\Arc_{\kappa}$ denote the full sub-category of locally ringed spaces whose objects are locally ringed spaces $\fx$ constructed precisely as in \ref{bullets}. We call
it the {\it category of admissible arcs over} ${\kappa}$.  Sometimes we will also write the objects of $\Arc_{\kappa}$ as  $(\fx, \bX)$ when $\fx$ and $\bX$ are as in \ref{bullets}. There is no danger here as $\fx$ uniquely determines $\bX$ and vise verse. This is just a notational convience.

\begin{example} Let $\fl_n := \Spec {\kappa}[x]/(x^n)$. Then,
\begin{equation}
\fl := \colim\{\fl_n\}=\colim_n \fl_n =  \Spec {\kappa}[[x]] \in\Arc_{\kappa} \ .
\end{equation}
\end{example}

\begin{theorem} \label{arcschar}  Every element
	 $(\fx,\bX)$ of  $\Arc_{\kappa}$ is isomorphic to $(J_{o}^{\infty} X, \{J_{o}^{n}X\})$ for some closed subscheme $X$ of 
	$\bA_{{\kappa}}^{g}$ where $o \in X$ is a closed point.
\end{theorem}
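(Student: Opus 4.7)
The plan is to algebraize the complete noetherian local ring $R = \sO_\fx(\{x\})$: produce a finitely generated $\kappa$-algebra $A$ together with a maximal ideal $\sM_o \subset A$ and compatible isomorphisms $A/\sM_o^n \iso R/\sM^n$ for every $n \ge 1$. Granted such an $A$, setting $X := \Spec A$ and $o := [\sM_o]$ yields compatible isomorphisms $J_o^n X \iso \fs_n$, and their colimit in locally ringed spaces is the desired isomorphism $J_o^\infty X \iso \fx$, with $X$ a closed subscheme of some $\bA_\kappa^N$ (this $N$ plays the role of $g$ in the statement).

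To prepare the ground, observe that $R/\sM^n$ is a finite-dimensional $\kappa$-algebra because $\fs_n \in \sch{\kappa}$ is artinian over $\Spec\kappa$; hence the residue field $k := R/\sM$ is a finite extension of $\kappa$, and $\sM$ is finitely generated over $R$ by Nakayama. Choose $\pi_1,\ldots,\pi_g \in \sM$ generating $\sM$, and $\lambda_1,\ldots,\lambda_r \in R$ whose residues generate $k$ as a $\kappa$-algebra. Set $N := g+r$ and define the $\kappa$-algebra homomorphism $\phi : \kappa[t_1,\ldots,t_N] \to R$ by $t_i \mapsto \pi_i$ for $i \le g$ and $t_{g+j} \mapsto \lambda_j$. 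Let $\sI := \phi^{-1}(\sM) \subset \kappa[t]$, $J := \ker\phi$, $A := \kappa[t]/J$, and $\sM_o := \sI/J$. Then $A$ is finitely generated over $\kappa$, $\sM_o$ is a maximal ideal with $A/\sM_o \cong k$, and $X := \Spec A$ embeds as a closed subscheme of $\bA_\kappa^N$ with closed point $o$ cut out by $\sM_o$.

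The crux is to verify $A/\sM_o^n \iso R/\sM^n$ for every $n$, equivalently the identity $\phi^{-1}(\sM^n) = J + \sI^n$ in $\kappa[t]$. The inclusion $J + \sI^n \subseteq \phi^{-1}(\sM^n)$ is immediate. Surjectivity of the induced map $\kappa[t] \to R/\sM^n$ is proved by induction on $n$: the base case uses that the $\lambda_j$ generate $k$ over $\kappa$, and the inductive step uses that $\sM^n/\sM^{n+1}$ is spanned over $\kappa$ by products of monomials of degree $n$ in the $\pi_i$ with polynomial expressions in the $\lambda_j$. I expect the reverse inclusion $\phi^{-1}(\sM^n) \subseteq J + \sI^n$, i.e., the injectivity of the comparison map, to be the main technical obstacle. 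The plan there is to pass to the $\sM_o$-adic completion and show that the induced surjection $\widehat{A}_{\sM_o} \twoheadrightarrow R$ is an isomorphism: it is a surjection of complete noetherian local $\kappa$-algebras with the same residue field $k$ and the same system of topological generators, and Artin's algebraization theorem together with Cohen's structure theorem (to identify both sides as quotients of a common power series ring over $k$) forces it to be an isomorphism, equivalently via an Artin--Rees comparison of the $\sM_o$-adic filtration on $A$ with the subspace filtration induced from the $\sM$-adic filtration on $R$. Mild equicharacteristic hypotheses may be needed to invoke Cohen's theorem cleanly, and these are harmless in the paper's setting; once this injectivity is in hand, the compatible isomorphisms $A/\sM_o^n \iso R/\sM^n$ assemble into the required isomorphism of directed systems $\{J_o^n X\} \iso \bX$.
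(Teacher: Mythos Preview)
The paper states this theorem without proof, so there is nothing to compare your argument against directly; what follows is an assessment of your proposal on its own terms.

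Your construction of $A=\kappa[t_1,\dots,t_N]/J$ and the inductive argument for surjectivity of $A\to R/\sM^n$ are fine. The genuine problem is the injectivity step $\phi^{-1}(\sM^n)\subseteq J+\sI^n$, and the justification you sketch does not establish it. Saying that $\widehat{A}_{\sM_o}\twoheadrightarrow R$ is a surjection of complete local $\kappa$-algebras ``with the same residue field and the same system of topological generators'' does not force it to be an isomorphism; for instance $\kappa[[x,y]]\twoheadrightarrow \kappa[[x,y]]/(xy)$ has all those features. Neither Artin approximation nor Cohen's theorem supplies the missing input here: Cohen only tells you both rings are quotients of a power series ring, not that the kernel is the same.

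More concretely, your construction can fail outright if the $\pi_i$ are not chosen minimally. Take $R=\kappa[[x]]$ (char $0$), $\pi_1=x$, $\pi_2=e^x-1\in\sM$. These generate $\sM$, but since $e^x-1$ is transcendental over $\kappa(x)$ one finds $J=\ker\phi=0$, so $A=\kappa[t_1,t_2]$ and $\widehat{A}_{\sM_o}=\kappa[[t_1,t_2]]$, which surjects onto $R=\kappa[[x]]$ but is not isomorphic to it. So at the very least you must insist that the $\pi_i$ induce a basis of $\sM/\sM^2$; and even then, the equality $A\cap\sM^n=\sM_o^n$ you need is a nontrivial statement about comparing the $\sM_o$-adic topology on $A$ with the subspace topology from $R$, which is not an Artin--Rees consequence since $R$ is typically not module-finite over $A$.

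It is also worth flagging that the theorem as stated in the paper is delicate: it asserts that every complete noetherian local $\kappa$-algebra with finite residue field arises as $\widehat{\sO_{X,o}}$ for some finite-type $X$, i.e.\ that every such formal germ is algebraizable. This is not automatic and is false in general without further hypotheses. The corollary the paper actually uses --- that the embedding dimensions of the $\fs_n$ are uniformly bounded --- follows immediately from the fact that $\sM$ is finitely generated, via Cohen's structure theorem, and does not require the full algebraization claim. You may want to aim for that weaker statement instead.
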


Here $J_{o}^{n}X$ denotes the subscheme of $X$ determined by the $n$th power of the maximal ideal of $o$ and $J_{o}^{\infty} X$ denotes the filtered colimit in locally ringed spaces (i.e., it is a formal scheme).  Therefore, an immediate corollary of this theorem is that if $(\fx , \bX)\Arc_{\kappa}$, then  $\bX$ is a collection of fat points such that the embedded dimension of
	every $\fm \in \bX$ is bounded by some natural number $g$.
We have a set map $\ell : \fatpoints{\kappa} \to \bN$
defined by setting $\ell(\fm)$ to be the dimension of $\sO_{\fm}(\fm)$ as
a vector space over ${\kappa}$ -- i.e., the cardinality of the basis elements used to generate the global sections of $\fm$ as 
a vector space over $\kappa$. We call $\ell(\fm)$ the {\it length of} $\fm$. 

\subsection{Notions of stability for schemes.}\label{1}

Let $\fx \in \Arc_{\kappa}$ and $X\in\sch{\kappa}$, we define $\nabla_{\fx} X$ to be the projective limit in the category of locally ringed spaces of the indirected set of schemes $\{\nabla_{\fn}X\mid \fn \in \bX\}$. For a definition of $\nabla_{\fn}X$, the reader may consult Equation \ref{arcop} of \S \ref{finite}. The reader may also consult \S 3 and \S 4 of \cite{Sch2} for more details. The important fact here is that any morphism $\fn \to \fm$ of fat points induces a natural transformation of functors $\nabla_{\fm} \to \nabla_{\fn}$ so that the definition of $\nabla_{\fx}X$ makes sense. 

\begin{definition} Let $\fx \in \Arc_{\kappa}$.
 We will say that $X\in\sch{\kappa}$ is $\fx$-\textit{quasi-stable} if there exists a
positive 
integer $N$ such that for all $\fn \in \bX$  with $\ell(\fn) \geq N$, the
functor  $$(\nabla_{\fx/\fn}X)^{\circ} : \fatpoints{\kappa} \to \set$$ is a formal sieve. 
\end{definition}

In the above definition, $\nabla_{\fx/\fn}X$ denotes the image of $\nabla_{\fx}X$ in $\nabla_{\fn}X$
under the natural map $\pi_{\fn}^{\fx}$.  Here, as usual, $Y^{\circ}$ is the functor
$\fatpoints{\kappa} \to \mbox{Sets}$ defined by $Y^{\circ}(\fm) = \mor{\sch{\kappa}}{\fm}{Y}$ whenever $Y\in \sch\kappa$.
By Theorem 8.1 of \cite{Sch2}, 
every $X \in \sch{\kappa}$ is $\fx$-quasi-stable where $\fx = J_{p}^{\infty} X$ and $p$ is some closed point of $X$. Thus, by Theorem 8.1 of \cite{Sch2}, we have  the following proposition.

\begin{proposition}\label{quasistableprop}
 For every $X \in \sch{\kappa}$, $X$ is $\fx$-quasi-stable for all $\fx\in\Arc_{\kappa}$. 
\end{proposition}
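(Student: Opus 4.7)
The plan is to reduce the statement directly to Schoutens' Theorem 8.1 of \cite{Sch2} using the structural classification of admissible arcs given by Theorem \ref{arcschar}. First, I would appeal to Theorem \ref{arcschar} to write $\fx \cong (J_o^{\infty} Y, \{J_o^n Y\})$ for some closed subscheme $Y \subset \bA_\kappa^g$ and some closed point $o \in Y$. This shows that $\bX$ is a cofiltered tower of fat points with uniformly bounded embedded dimension (by $g$) and surjective transition maps arising from the quotients $\sO_{Y,o}/\sM_o^{n+1} \to \sO_{Y,o}/\sM_o^n$. These structural features are intrinsic to $\fx$ and make no reference to $X$ whatsoever.

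Second, I would apply Theorem 8.1 of \cite{Sch2} to the scheme $X$ and the arc $\fx$. Although the form of that theorem quoted in the excerpt specializes to the case $\fx = J_p^{\infty} X$, its proof depends only on the structural features of $\fx$ just enumerated---a cofiltered tower of fat points of bounded embedded dimension with surjective, noetherian transition maps---and not on any embedding of $\fx$ into $X$ itself. The conclusion is a positive integer $N$ such that whenever $\ell(\fn) \geq N$, the image $\nabla_{\fx/\fn} X$ of $\pi_{\fn}^{\fx} : \nabla_{\fx} X \to \nabla_{\fn} X$ defines a formal sieve $(\nabla_{\fx/\fn} X)^{\circ}$ in $\categ{Form}_\kappa$, which is exactly the desired quasi-stability.

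The main obstacle will be justifying the extension of Theorem 8.1 from the intrinsic case $\fx = J_p^{\infty} X$ to a general admissible arc $\fx$. The essential content of Theorem \ref{arcschar} is precisely that every $\fx \in \Arc_\kappa$ has the combinatorial shape of a jet tower in affine space, which matches the hypothesis required by Schoutens' stabilization argument. One should therefore trace through the proof of Theorem 8.1 in \cite{Sch2} to confirm that nothing there relies on the choice $Y = X$, $o = p$ beyond these structural features; once this is verified, the proposition follows formally from Theorem \ref{arcschar} together with Theorem 8.1 of \cite{Sch2}.
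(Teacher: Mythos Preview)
Your proposal is correct and follows essentially the same route as the paper: both arguments consist of an appeal to Theorem~8.1 of \cite{Sch2}. The paper's entire proof is the sentence ``Thus, by Theorem~8.1 of \cite{Sch2}, we have the following proposition,'' leaving implicit the reduction you spell out via Theorem~\ref{arcschar}; your version is simply a more careful unpacking of why that citation suffices for an arbitrary $\fx \in \Arc_\kappa$ rather than only for $\fx = J_p^\infty X$.
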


We denote by $qs_{\fx}(X)$ the minimum
positive integer such that 
$$ \exists \fn \in \bX, \ qs_{\fx}(X) = \ell(\fn) \ \ \mbox{and} \ \
(\nabla_{\fx/\fn}X)^{\circ}\in\categ{Form}_{\kappa} \ .$$
In this case, we say that $X$ is $\fx$-\textit{quasi-stable at level}
$qs_{\fx}(X)-1$. We call it the {\it quasi-stability function}.  A direct extension of the previous proposition is the following
\begin{proposition} It is the case that
$qs_{\fx}(X) = 1$ for all $\fx \in \Arc_{\kappa}$ and all $X\in \sch{\kappa}$. Thus, every element $X$ of $\sch{\kappa}$ is $\fx$-quasi-stable at level $0$ for all $\fx\in\Arc_{\kappa}.$
\end{proposition}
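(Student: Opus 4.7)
The plan is to exhibit $\fs_1 \in \bX$ as the witness for quasi-stability at level $0$. The first step is to verify that $\ell(\fs_1) = 1$: since $\fs_1 = \Spec(R/\sM)$ is reduced, connected, and of dimension zero, it lies in $\fatpoints{\kappa}$, and (under the standing algebraically-closed-$\kappa$ hypothesis implicit in the fat-point theorem) its residue field equals $\kappa$, whence $\sO_{\fs_1}(\fs_1) = R/\sM = \kappa$ is one-dimensional over $\kappa$. I also record that $\nabla_{\fs_1}X = X$, since the generalized arc operator along $\Spec\kappa$ acts as the identity on $\sch{\kappa}$.

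The second and central step is to show that $(\nabla_{\fx/\fs_1}X)^\circ$ is formal. For any integer $N \geq 1$, the projection $\pi_{\fs_1}^\fx : \nabla_\fx X \to X$ factors as $\pi_{\fs_1}^{\fs_N} \circ \pi_{\fs_N}^\fx$, where $\pi_{\fs_1}^{\fs_N} : \nabla_{\fs_N}X \to X$ is a genuine morphism in $\sch{\kappa}$ (using Schoutens's representability of $\nabla_{\fs_N}X$). Because image sieves are compatible with composition of natural transformations, this factorization yields
\begin{equation*}
\nabla_{\fx/\fs_1}X \;=\; (\pi_{\fs_1}^{\fs_N})^\circ\bigl(\nabla_{\fx/\fs_N}X\bigr) \;\subset\; X^\circ .
\end{equation*}

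By Proposition \ref{quasistableprop}, I may choose $N$ large enough that $\nabla_{\fx/\fs_N}X$ is formal. The argument then closes with a short lemma: the pushforward of a formal sieve under a morphism of schemes is again formal. Indeed, given a formal $\sY \subset B^\circ$ and a scheme morphism $f : B \to C$, for each $\fm \in \fatpoints{\kappa}$ formality supplies a subschemic $\sY'_\fm = \fim{\phi^\circ} \subset \sY$ with $\sY'_\fm(\fm) = \sY(\fm)$; then $\fim{(f\circ\phi)^\circ} \subset f_\ast\sY$ is a subschemic subsieve whose value at $\fm$ is $(f_\ast\sY)(\fm)$. Applying this to $f = \pi_{\fs_1}^{\fs_N}$ yields formality of $\nabla_{\fx/\fs_1}X$. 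Combined with the trivial lower bound $qs_\fx(X) \geq 1$, this produces $qs_\fx(X) = 1$.

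The step I expect to require the most care is the identity $\nabla_{\fx/\fs_1}X = (\pi_{\fs_1}^{\fs_N})^\circ(\nabla_{\fx/\fs_N}X)$, which demands that one distinguish the pro-object $\nabla_\fx X$ from its functor of points and verify that the image sieve of the whole projective limit can be computed at any sufficiently advanced finite stage. The length calculation and the formality-preservation lemma are by contrast essentially bookkeeping once the sieve-theoretic definitions of \emph{subschemic} and \emph{formal} are unwound.
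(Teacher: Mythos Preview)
Your argument is correct. The paper itself offers no proof beyond the phrase ``a direct extension of the previous proposition,'' so you have supplied the details that the author leaves implicit.

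The approaches differ slightly in where the work is done. The paper's one-line justification almost certainly intends that Theorem~8.1 of \cite{Sch2} (the input to Proposition~\ref{quasistableprop}) already shows $(\nabla_{\fx/\fn}X)^{\circ}$ is formal for \emph{every} $\fn\in\bX$, not merely for $\ell(\fn)\gg 0$; once that is granted, $qs_{\fx}(X)=1$ is immediate from $\ell(\fs_1)=1$. You instead take Proposition~\ref{quasistableprop} at face value (formality only for large $N$) and bootstrap down to $\fs_1$ via the factorisation $\pi_{\fs_1}^{\fx}=\pi_{\fs_1}^{\fs_N}\circ\pi_{\fs_N}^{\fx}$ together with the lemma that pushforward along a scheme morphism preserves formality. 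That lemma is correct as you state it, and the image identity $\nabla_{\fx/\fs_1}X=(\pi_{\fs_1}^{\fs_N})^{\circ}(\nabla_{\fx/\fs_N}X)$ holds because $\mathrm{Mor}(\fm,\nabla_{\fx}X)=\varprojlim_n\mathrm{Mor}(\fm,\nabla_{\fs_n}X)$ and projection to level~$1$ factors through level~$N$. Your route has the advantage of being self-contained: it does not require inspecting the proof of the external theorem in \cite{Sch2}.

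One small remark: the hypothesis you need for $\ell(\fs_1)=1$ is not that $\kappa$ be algebraically closed per se, but that the residue field $R/\sM$ equal $\kappa$. This is already forced by the standing assumption that each $\fs_n$ lies in $\sch{\kappa}$ together with the requirement (implicit in the whole construction of $\Arc_{\kappa}$) that the $\fs_n$ be fat points, i.e.\ $\fs_n^{\mathrm{red}}=\Spec\kappa$.
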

The following is a list progressively stronger properties built out of the
notion of quasi-stability: 

\begin{itemize}
\item ({\it Weak Stability}). Suppose that $X$ is $\fx$-quasi-stable. This
implies that there exists fat points  $\fm \geq
\fn$ in $\bX$ such that
$$(\nabla_{\fx/\fm}X)^{\circ} \ \ \ \ \mbox{and} \ \ \ \
(\nabla_{\fx/\fn}X)^{\circ}$$ are formal sieves whenever $\ell(\fn)>>0$. If it
is the case that $\nabla_{\fx/\fm}X$ is the union of fibers of the natural map
$$\pi_{\fn}^{\fm} : \nabla_{\fm}X \to \nabla_{\fn}X$$ when its range is
restricted to $\nabla_{\fx/\fn}X$ and when the length $\ell(n)$ above is
sufficiently large, then we say that $X$ is $\fx$-{\it weakly stable}. 
\item ({\it Lax Stability}) Assume that $X$ is $\fx$-weakly stable and that the
induced map $$\nabla_{\fx/\fm}X \to \nabla_{\fx/\fn}X$$
given above is a piecewise trivial fibration with fiber $\bA_{{\kappa}}^{r}$ when
$\ell(n) >>0$. In this case, we say that $X$ is $\fx$-{\it laxly stable}.
\item ({\it Stability}) Suppose that $X$ is $\fx$-laxly stable so that, in the
above, $r = d(\ell(\fm) - \ell(\fn))$ where $d=\dim X$. Then, we say that $X$ is
$\fx$-{\it stable}
\end{itemize}

Each of these conditions has a corresponding function in analogy to the
quasi-stability function $qs$. We will illustrate how this works by defining the
stability function:
When $X$ is $\fx$-stable, we will denote by $s_{\fx}(X)$ the minimum positive
integer such that $\forall \fm, \fn \in \bX$ with $\ell(\fm) > \ell(\fn) \geq
s_{\fx}(X)$, it is the case that
$$(\nabla_{\fx/\fm}X)^{\circ} \ \ \ \ \mbox{and} \ \ \ \
(\nabla_{\fx/\fn}X)^{\circ}$$ are formal sieves and that the natural map
$$\pi_{\fn}^{\fm} : \nabla_{\fm}X \to \nabla_{\fn}X \ ,$$ when its range is
restricted to $\nabla_{\fx/\fn}X$, is a piecewise trivial fibration with fiber
$\bA_{{\kappa}}^{r}$ where $r = d(\ell(\fm) - \ell(\fn))$ and $d = \dim X$. 
In this case, we say that $X$ is $\fx$-\textit{stable at level} $s_{\fx}(X)-1$.
when $X$ is not $\fx$-stable, we set $s_{\fx}(X) = +\infty$. Note that $s$ is a
function $$s : \sch{\kappa} \times \Arc_{\kappa} \to 
\bN\cup\{+\infty\}$$ which we call it the \textit{stability function}. We leave it
to the reader to define the {\it weak stability function} $ws$ and the {\it  lax
stability function} $ls$ as they are defined in exactly the same way.

For each $\fx \in \Arc_{\kappa}$, we define the following subsets of $\sch{\kappa}$:
\begin{itemize}
\item The collection of all $\fx$-weakly-stable separated ${\kappa}$-schemes of finite type: 
$$WStS_{\fx}:= ws_{\fx}^{-1}(\bN) \ .$$
\item The collection of all $\fx$-laxly-stable separated ${\kappa}$-schemes of finite type:
$$LStS_{\fx}:= ls_{\fx}^{-1}(\bN) \ .$$
\item The collection of all $\fx$-stable separated ${\kappa}$-schemes of finite type: $$StS_{\fx}
:= s_{\fx}^{-1}(\bN) \ .$$
\end{itemize}

\begin{theorem} \label{smoothstabletheorem}
Let $\categ{SmSch}_{\kappa}$ be the full subcategory of $\sch{\kappa}$ formed by 
separated smooth schemes of finite type over the field ${\kappa}$. 
 For all $\fx \in \Arc_{\kappa}$ and all $X \in \categ{SmSch}_{\kappa}$, $X$ is $\fx$-stable at level $0$. Hence,
 for all $\fx \in \Arc_{\kappa}$, and all $X\in \categ{SmSch}_{\kappa}$
 \begin{equation*}
 s_{\fx}(X) = ls_{\fx}(X) = ws_{\fx}(X) = 1 \ .
 \end{equation*}
Thus, by regarding $\categ{SmSch}_{\kappa}$ as a set, we have $\categ{SmSch}_{\kappa} \subset StS_{\fx}$ for all $ \fx \in \Arc_{\kappa}$. 
\end{theorem}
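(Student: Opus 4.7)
The strategy is two-fold: first, use formal smoothness of $X$ to identify $\nabla_{\fx/\fn}X$ with $\nabla_{\fn}X$ for every $\fn \in \bX$; second, reduce the computation of the truncation maps $\pi_{\fn}^{\fm}$ to the affine-space case $X = \bA^d_\kappa$ via an étale cover. Both stages rest on the standard local structure of smooth morphisms over $\kappa$.

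For the first stage, since $X$ is smooth over $\kappa$, the functor $X^\circ$ satisfies the infinitesimal lifting property, so for any closed immersion of fat points $\fn \hookrightarrow \fn'$ over $\kappa$ the map $X^\circ(\fn') \to X^\circ(\fn)$ is surjective. Fibered against any test point $\fm \in \fatpoints{\kappa}$ (along $\fm \times_\kappa \fn \hookrightarrow \fm \times_\kappa \fn'$) and applied inductively along $\bX$, this shows every element of $\nabla_{\fn}X(\fm) = X^\circ(\fm \times_\kappa \fn)$ extends to a compatible system, hence lies in the image of $\nabla_{\fx}X(\fm) \to \nabla_{\fn}X(\fm)$. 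Therefore $\nabla_{\fx/\fn}X = \nabla_{\fn}X$ for every $\fn \in \bX$; in particular $(\nabla_{\fx/\fn}X)^\circ$ is representable by an object of $\sch{\kappa}$ (\cite{Sch2}, \S 3), hence subschemic and formal, and the weak-stability condition holds tautologically because $\nabla_{\fx/\fm}X = \nabla_{\fm}X$ coincides with the full preimage of $\nabla_{\fx/\fn}X = \nabla_{\fn}X$ under $\pi_{\fn}^{\fm}$.

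For the second stage, smoothness of $X$ of dimension $d$ yields a finite Zariski cover $X = \bigcup U_i$ with étale maps $\varphi_i : U_i \to \bA^d_\kappa$. Because étale morphisms enjoy unique infinitesimal lifting, the functor $\nabla_{\fn}(-)$ commutes with étale base change: for étale $\varphi : U \to V$ the natural square with corners $\nabla_{\fn}U, \nabla_{\fn}V, U, V$ is cartesian. A direct computation on $\bA^d_\kappa = \Spec \kappa[x_1, \dots, x_d]$ gives $\nabla_{\fn}\bA^d_\kappa \cong \bA^{d\ell(\fn)}_\kappa$, and after choosing a $\kappa$-linear splitting of the surjection $\sO_{\fm}(\fm) \twoheadrightarrow \sO_{\fn}(\fn)$ the truncation $\nabla_{\fm}\bA^d_\kappa \to \nabla_{\fn}\bA^d_\kappa$ becomes the trivial linear projection $\bA^{d\ell(\fm)}_\kappa \to \bA^{d\ell(\fn)}_\kappa$ with fiber $\bA^{d(\ell(\fm)-\ell(\fn))}_\kappa$. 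Pulling back along each $\varphi_i$ trivializes the restriction of $\pi_{\fn}^{\fm}$ to $\nabla_{\fn}U_i$ as an $\bA^{d(\ell(\fm)-\ell(\fn))}_\kappa$-bundle, and patching over the Zariski cover $\nabla_{\fn}X = \bigcup \nabla_{\fn}U_i$ produces the required piecewise trivial fibration globally with $r = d(\ell(\fm)-\ell(\fn))$. This establishes $s_{\fx}(X) = 1$, and since $ws_{\fx}(X) \leq ls_{\fx}(X) \leq s_{\fx}(X)$ are all positive integers bounded below by $1$, one obtains $ls_{\fx}(X) = ws_{\fx}(X) = 1$ as well.

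The main technical obstacle is the étale-base-change assertion for $\nabla_{\fn}$: although intuitively evident from the unique lifting property of étale maps applied along $\Spec \kappa \hookrightarrow \fn$, a rigorous verification in the sieve-theoretic formalism of Section 1 requires checking the cartesian property functorially in the auxiliary test point $\fm$ used to probe $\nabla_{\fn}U$ and $\nabla_{\fn}V$. Once this is in hand, the remainder of the proof is a direct calculation on $\bA^d_\kappa$ together with a Zariski-local patching argument.
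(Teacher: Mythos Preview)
Your proof is correct and essentially reconstructs the argument that the paper merely cites: the paper's entire proof is the sentence ``This is proved in Theorem 4.14 of \cite{Sch2}.'' The strategy you outline---formal smoothness giving surjectivity of the truncation maps (so $\nabla_{\fx/\fn}X = \nabla_{\fn}X$), followed by \'etale-local reduction to $\bA^d_\kappa$---is exactly the standard route, and the \'etale base-change identity you flag as the main technical point is precisely Theorem 4.12 of \cite{Sch2}, which this paper later restates as Lemma \ref{etalefiber} and uses repeatedly in \S\ref{5}--\S\ref{7}. So there is no genuine divergence of method; you have simply unpacked the citation.

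One small remark: your inductive lifting argument along $\bX$ to show $\nabla_{\fx}X \to \nabla_{\fn}X$ is surjective implicitly uses that $\bX$ is a countable tower (so a compatible system can be built by successive choices). This is true in the paper's setup, where $\bX = \{J_o^n Y\}$ is indexed by $\bN$, but is worth stating explicitly since for uncountable inverse systems surjectivity at each stage does not guarantee surjectivity from the limit.
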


\begin{proof}
 This is proved in Theorem 4.14 of \cite{Sch2}. 
\end{proof}

\noindent Clearly, for all $\fx \in \Arc_{\kappa}$,
\begin{equation}
\categ{SmSch}_{\kappa}\subset StS_{\fx} \subset LStS_{\fx} \subset
WStS_{\fx} \subset \sch{\kappa}  \ .
\end{equation}

\begin{conjecture} \label{stableconj} For each $\fx \in \Arc_{\kappa}$ of positive dimension, each inclusion above is strict.
\end{conjecture}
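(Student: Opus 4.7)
My plan is to prove the conjecture by exhibiting, for each $\fx \in \Arc_{\kappa}$ of positive dimension, explicit counterexamples witnessing each of the four strict containments. The examples become progressively more subtle as one moves inward, beginning with grossly non-reduced schemes and ending with delicately controlled singularities.

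First I would address the outermost inclusion $WStS_{\fx} \subsetneq \sch{\kappa}$ via the non-reduced fat point $X = \Spec \kappa[x]/(x^{2})$. When the structure ring $R$ of $\fx$ is a complete Noetherian local domain of positive dimension, the relation $x^{2} = 0$ forces $\nabla_{\fx} X$ to consist of a single $\kappa$-point, whereas $\nabla_{\fn} X$ is a nontrivial affine space of dimension $\lfloor \ell(\fn)/2 \rfloor$ whenever $\ell(\fn) \geq 2$. A direct comparison shows that the preimage $(\pi_{\fn}^{\fm})^{-1}(\nabla_{\fx/\fn}X)$ in $\nabla_{\fm}X$ properly contains $\nabla_{\fx/\fm}X$ as soon as $\ell(\fm)$ is sufficiently larger than $\ell(\fn)$, so the cylinder condition in the definition of weak stability fails. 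If $R$ itself carries nontrivial nilpotents one adapts the target (e.g.\ passing to $\Spec \kappa[x]/(x^{N+1})$ for sufficiently large $N$) so that the strictness is preserved.

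For $LStS_{\fx} \subsetneq WStS_{\fx}$ I would turn to a reduced singular variety such as the nodal curve $X = \Spec \kappa[x,y]/(xy)$. Its arc space decomposes into two smooth branches meeting over the singular point, and the induced map between truncated images is expected to be a fibration whose fibers record the branching geometry rather than pure affine space -- so lax stability fails while weak stability can be verified directly from the branch decomposition. For $StS_{\fx} \subsetneq LStS_{\fx}$ I would exploit non-reducedness in the opposite direction: a scheme such as $X = \Spec \kappa[x,y]/(y^{2})$, whose reduction is smooth of dimension $1$ but whose nilpotent direction contributes extra parameters to $\nabla_{\fn} X$, should yield piecewise trivial $\bA^{r}$-fibrations in the truncation maps with $r \neq d(\ell(\fm) - \ell(\fn))$ for $d = \dim X = 1$, defeating full stability while preserving lax stability.

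Finally, for $\categ{SmSch}_{\kappa} \subsetneq StS_{\fx}$ I would look for a non-smooth scheme whose arc-space behaviour nevertheless satisfies the exact fibration-and-dimension condition of full stability. Natural candidates are regular but non-smooth schemes arising over a non-perfect base, or certain varieties with very mild (e.g.\ rational) singularities. I expect this last inclusion to be the chief obstacle, since full stability is a rigid condition that closely tracks the behaviour of smooth schemes; producing a non-smooth witness requires a fine understanding of when a singularity leaves the precise affine-space fiber of dimension $d(\ell(\fm)-\ell(\fn))$ intact. A secondary difficulty is the uniformity required over all positive-dimensional $\fx$: each witness must succeed for every such $\fx$, which may force us either to adapt the construction to the complete local ring attached to $\fx$ or to find a single universal counterexample. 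Establishing this uniformity cleanly is where the hardest technical work is likely to lie.
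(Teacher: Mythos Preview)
The statement you are attempting to prove is stated in the paper as a \emph{conjecture}, not a theorem: the paper offers no proof of it, and immediately follows it only with a remark explaining why the positive-dimension hypothesis is needed (namely, that all stability notions become vacuous when $\fx$ is a fat point). There is therefore no ``paper's own proof'' against which to compare your proposal.

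Your proposal is not a proof but a strategy sketch, and you acknowledge this explicitly: for $LStS_{\fx} \subsetneq WStS_{\fx}$ you say only that the fibers are ``expected'' to fail to be affine space; for $StS_{\fx} \subsetneq LStS_{\fx}$ you say the example ``should'' yield the wrong fiber dimension; and for $\categ{SmSch}_{\kappa} \subsetneq StS_{\fx}$ you offer only candidate classes of schemes and concede that this is ``the chief obstacle.'' None of these steps is actually carried out. Even the first step contains a subtlety you pass over: you assert that for $X = \Spec \kappa[x]/(x^{2})$ the arc space $\nabla_{\fx}X$ reduces to a single $\kappa$-point when $R$ is a domain, but your claim that $\nabla_{\fn}X$ has dimension $\lfloor \ell(\fn)/2 \rfloor$ presumes a specific structure on $\fn$ (essentially $\fn = \fl_{n}$), whereas the conjecture is for arbitrary $\fx \in \Arc_{\kappa}$. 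More seriously, the paper's own later discussion (see Example~\ref{nofibration} and the surrounding material on auto-arcs and simple point systems) shows that the behaviour of $\nabla_{\fn}\fn$ for general $\fn$ is genuinely not understood, which is precisely why the paper leaves this as a conjecture rather than a theorem. Your outline does not engage with this obstruction.

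In short: the paper does not prove this statement, and neither does your proposal.
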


\begin{remark}
Note that by design all of these notions of stability are vacuous if we the arc $\fx$ has dimension zero. This is because $\fx$ would just be a fat point whose maximal ideal has nilpotency $n$. More specifically, in the notation of \ref{bullets}, we have $R_m = R_n$ for all $m\geq n.$
\end{remark}

\subsection{Stability}\label{2}

Let $\sH_{\kappa}=\mathbf{Gr}(\categ{Form}_{\kappa})_{\bL}$ be the localization of the Grothedieck ring of
the formal motivic site at $\bL$ and let $\sG_{\kappa} = 
\mathbf{Gr}(\mathbf{Var}_{\kappa})_{\bL}$ be the localization of the Grothendieck ring of varieties
over ${\kappa}$ at $\lef$.
We have a set-theoretic function $\mbox{dim}$ from $\sH_{\kappa}$ to $\bZ\cup\{-\infty\}$ defined by sending the element $[\sX]\lef^{-i}$ to 
the integer $\dim \sX - i$  and extending linearly through $\zet$. Here, $\sX$ is of course a formal sieve, and we set $\dim 0 = -\infty$. In exactly the same way, we have a set-theoretic function  $\mbox{dim}$ from $\sG_{\kappa}$ to $\bZ\cup\{-\infty\}$. 
 Let $\hat\sH_{\kappa}$ and $\hat\sG_{\kappa}$ be the 
group completion of $\sH_{\kappa}$ and $\sG_{\kappa}$ with respect to the filtration of subgroups given by
\begin{equation*} F^m\sH_{\kappa} = \{ X \in \sH_{\kappa} \mid \dim X < m\} \ \ \ \mbox{and} \ \ \ F^m\sG_{\kappa} =
\{X \in \sG_{\kappa} \mid \dim X < m\} \ , 
\end{equation*}
respectively. Multiplication in $\sH_{\kappa}$ and $\sG_{\kappa}$ extend to the group completions making
 $\hat\sH_{\kappa}$ and $\hat\sG_{\kappa}$ rings. 

\begin{theorem} \label{reductionring}
 There is a ring homomorphism
\begin{equation*}
\sigma:  \mathbf{Gr}(\categ{Form}_{\kappa}) \to \mathbf{Gr}(\mathbf{Var}_{\kappa}) \ .
\end{equation*}
Moreover, $\sigma$ canonically
induces ring homomorphisms $\sigma' :\sH_{\kappa} \to \sG_{\kappa}$ and $\hat \sigma : \hat
\sH_{\kappa} \to \hat\sG_{\kappa}$. The ring homomorphism $\hat\sigma$ is a continuous ring
homomorphism of topological rings. 
\end{theorem}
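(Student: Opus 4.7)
The plan is to construct $\sigma$ by sending the class of a formal sieve to the class of its underlying reduced image, then verify the ring structure and descend through the localization at $\lef$ and the filtration completion. First I would define $\sigma$ on a subschemic sieve $\sX = \im(\varphi^\circ)$ with $\varphi : Z \to X$ in $\sch\kappa$: by Chevalley's theorem the set-theoretic image $\varphi(Z) \subset X$ is a constructible subset, so it admits a finite decomposition into locally closed subschemes; passing to reductions, this decomposition yields a well-defined class in $\mathbf{Gr}(\mathbf{Var}_\kappa)$, which I would declare to be $\sigma([\sX])$. Independence from the presentation $\varphi$ uses the earlier criterion that the isomorphism class of a closed subscheme of an ambient object of $\sch\kappa$ is detected by its values on fat points: two representatives of the same subschemic sieve already agree on $\Spec\kappa$-points, hence have identical constructible images, which in turn have identical reductions.

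Next I would extend $\sigma$ to an arbitrary formal sieve $\sX$. The defining property of a formal sieve supplies, for the fat point $\Spec\kappa$, a subschemic subsieve $\sY \subset \sX$ with $\sY(\Spec\kappa) = \sX(\Spec\kappa)$, and I would set $\sigma([\sX]) := \sigma([\sY])$. If $\sY'$ is another such subsieve, then $\sY$ and $\sY'$ have identical $\kappa$-points, so the constructible subsets they cut out in any common ambient scheme have equal reductions and therefore represent the same class in $\mathbf{Gr}(\mathbf{Var}_\kappa)$. Additivity of $\sigma$ on disjoint unions follows by assembling a covering subschemic sieve of $\sX_1 \sqcup \sX_2$ out of those of the summands, while multiplicativity follows since products and coproducts commute in $\categ{Form}_\kappa$ and reduction commutes with fibered products of reduced $\kappa$-schemes.

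For the descent to localizations, since $\sigma(\lef) = [\bA_\kappa^1] = \lef$, the map sends the multiplicative system $\{\lef^n\}$ to itself and hence induces $\sigma' : \sH_\kappa \to \sG_\kappa$. For the completions, I would observe that $\dim \sigma([\sX]) \leq \dim \sX$ because passing to a reduced constructible subimage cannot increase dimension; consequently $\sigma'(F^m \sH_\kappa) \subset F^m \sG_\kappa$ for every $m$, so $\sigma'$ is continuous with respect to the filtration topologies and extends uniquely, by the universal property of completion, to a continuous ring homomorphism $\hat\sigma : \hat\sH_\kappa \to \hat\sG_\kappa$.

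The main obstacle will be the well-definedness at the second step: one must verify that the reduced image really is an invariant of the formal sieve $\sX$ and not of its chosen subschemic covering on $\kappa$-points, and furthermore that the scissor and product relations defining $\mathbf{Gr}(\categ{Form}_\kappa)$ descend to the analogous relations in $\mathbf{Gr}(\mathbf{Var}_\kappa)$ under this prescription. A secondary technical point, which I would address by a direct dimension count on each piece of a constructible stratification, is that the dimension of a formal sieve agrees with or dominates the dimension of its reduced shadow, guaranteeing the filtration compatibility needed for continuity of $\hat\sigma$.
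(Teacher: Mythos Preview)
Your approach is essentially the same as the paper's: both construct $\sigma$ by using Chevalley's theorem to realize the image of a subschemic sieve as a constructible subset, then pull it back to the reduction of the ambient scheme, and finally pass through localization and completion by functoriality. The paper's explicit formula is $\sigma(\sX) = f^{-1}(\mathrm{im}(Y\to X)(\kappa))$ with $f: X^{red}\to X$ the reduction map, which is exactly your reduced constructible image; the paper defers most of the verification to \cite{Sch1} and only adds the remark that Grothendieck's general form of Chevalley (EGA IV, 1.8.4) removes the need for $\kappa$ to be algebraically closed, whereas you spell out the well-definedness and the filtration compatibility $\sigma'(F^m\sH_\kappa)\subset F^m\sG_\kappa$ that the paper dismisses as ``basic facts concerning localization and completion.''
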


\begin{proof}
The case when the underlying field is algebraically closed is done in the proof of Theorem 7.7 of \cite{Sch1}. That argument
only depends on the the field being algebraically closed because Schoutens uses a weak form of Chevalley's theorem
to insure that $\sX({\kappa}) = \mbox{im}(Y\to X)({\kappa})$ is a constructible subset of $X({\kappa})$. This result was generalized by 
Grothendieck in \cite{G2} Theorem 1.8.4 to all morphisms of finite presentation between quasi-compact 
quasi-separated schemes\footnote{Here, one needs the quasi-compact and quasi-separated actually only on $X$ as $Y$ is constructible. See
(1.8.1) of \cite{G2} for the details.}. However, all morphisms in $\sch{\kappa}$ satisfy this hypothesis regardless of the ground field. 
Thus, Schoutens' proof goes through over any field ${\kappa}$.  Explicitly, we define
\begin{equation*}
 \sigma(\sX) = f^{-1}(\mbox{im}(Y\to X)({\kappa}))
\end{equation*}
where $f$ is the reduction map from $X^{red} \to X$. Note that by 1.8.2 of \cite{G2}, $\sigma(\sX)$ is indeed a constructible subset of the variety
$X^{red}$. 

The rest follows from basic facts concerning localization and
completion as functors. The could consult  \cite{AM} for these facts. 
\end{proof}

\begin{definition} We define a map of sets
$\mu : StS_{\fx} \times \Arc_{\kappa} \to \hat\sH_{\kappa}$ by
\begin{equation*}\mu(X, \fx) :=\mu_{\fx}(\nabla_{\fx}X):=  
[(\nabla_{\fx/\fn}X)^{\circ}]\bL^{-s_{\fx}(X)\dim X} \end{equation*} where $\fn
\in \bX$ is such that $\ell(\fn) = s_{\fx}(X)$. We call 
 $\mu_{\fx}(\nabla_{\fx}X)$ the {\it stable motivic} $\fx$-{\it volume of} $X$.
\end{definition}

By a {\it  function} from $\alpha: (\nabla_{\fx}X)^{\circ} \to \bN\cup\{+\infty\}$, we mean an assignment (in general not a functor) which associates to each $\fm \in\fatpoints\kappa$ a set theoretic function from $(\nabla_{\fx}X)^{\circ}(\fm) \to \bN\cup\{+\infty\}$. As $(\nabla_{\fx}X)^{\circ}$ is itself represented by a scheme, we will often write $\nabla_{\fx}X$ for $(\nabla_{\fx}X)^{\circ}$.

\begin{definition}
Let $X$ be an element of $StS_{\fx}$ (resp., $WStS_{\fx}$ and
$LStS_{\fx}$). Let $\alpha : \nabla_{\fx}X \to \bN\cup\{+\infty\}$ be a function
such that for all $\fm \in \bX$ with $\ell(\fm) \geq s_{\fx}(X)$ (resp., $\ell(\fm)\geq WStS_{\fx}$ and
$\ell(\fm)\geq LStS_{\fx}$))  the subset
$\alpha^{-1}(n)$ of  $\nabla_{\fx}X$ is such that
$\pi_{\fm}^{\fx}(\alpha^{-1}(n))$  is a formal sieve. In this case, we say that
$\alpha$ is a $\fx$-{\it stable function} (resp., $\fx$-{\it weakly stable function} and $\fx$-{\it laxly stable
function}).
\end{definition}

\begin{definition}
 Let $A$ be a subsieve of $\nabla_{\fx} X$. Suppose that  the characteristic function of $A$ defined by
$I_{A} : \nabla_{\fx} X \to \bN\cup\{+\infty\}$ defined by \[I_{A}(a) =
\begin{cases} 1 &\text{whenever $a\in A$} \\ 0 &\text{otherwise}\end{cases}\] is
$\fx$-weakly stable (resp., $\fx$-laxly stable and $\fx$-stable). In this case, we say
that $A$ is $\fx$-{\it weakly-stable} (resp., 
$\fx$-{\it laxly-stable} and $\fx$-{\it stable}).
\end{definition}

\begin{theorem} \label{subscheme}
Assume that $X$ is $\fx$-stable where $(\fx, \bX) \in \Arc_{\kappa}$. Let  $S$ be a
closed subscheme of $X$. Let $X_S$ be the formal completion of $X$ along $S$. Then, $X_S$ is $\fx$-laxly stable, $ls_{\fx}(X_S) =
s_{\fx}(X)$, and hence $\nabla_{\fx}X_S$ is a $\fx$-stable subset of
$\nabla_{\fx}X$. 
\end{theorem}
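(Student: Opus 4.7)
The plan is to identify $\nabla_{\fx}X_S$ explicitly as a subsieve of $\nabla_{\fx}X$, and then transfer the $\fx$-stability fibrations of $X$ through this identification. First I would show that for any fat point $\fm \in \bX$, the sieve $\nabla_{\fm}X_S$ sits inside $\nabla_{\fm}X$ as the preimage of $S$ under the ``0-th order center'' morphism $\pi_{\fm}:\nabla_{\fm}X\to X$ induced by the closed immersion $\spec{\kappa}\inj \fm_0\times_{\kappa}\fm$. The reason is that a morphism $\fm_0\times_{\kappa}\fm\to X_S=\colim_n Y_n$ from a fat point factors through some $Y_n$, which is a closed subscheme of $X$ cut out by $\sI_S^n$; since the target fat point has nilpotent maximal ideal, this factorization condition is equivalent to requiring the set-theoretic image (a single point) to lie in $S$. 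Passing to the projective limit over $\bX$ yields $\nabla_{\fx}X_S = \pi^{-1}(S)\cap \nabla_{\fx}X$ where $\pi:\nabla_{\fx}X\to X$ is the center map.

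Next I would check the compatibility with truncation: for every $\fm\in\bX$, one has
\[
\nabla_{\fx/\fm}X_S \;=\; \nabla_{\fx/\fm}X \;\cap\; \pi_{\fm}^{-1}(S).
\]
Indeed, the center of an arc is determined by its reduction at any level $\geq 1$, so being ``centered at $S$'' is preserved and reflected by all the projections $\pi_{\fn}^{\fm}$. In particular, the $\fx$-stability fibration $\pi_{\fn}^{\fm}:\nabla_{\fx/\fm}X\to\nabla_{\fx/\fn}X$ (which is piecewise trivial with fiber $\bA_{{\kappa}}^r$, $r = d(\ell(\fm)-\ell(\fn))$, whenever $\ell(\fn)\geq s_{\fx}(X)$) carries $\nabla_{\fx/\fm}X_S$ onto $\nabla_{\fx/\fn}X_S$, because each fiber lies entirely inside or entirely outside the preimage of $S$. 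Restricting a piecewise trivial fibration to the preimage of a constructible subset of the base gives again a piecewise trivial fibration with the same affine fiber $\bA_{{\kappa}}^r$. This proves that $X_S$ is $\fx$-laxly stable starting from level $s_{\fx}(X)-1$ (the fiber dimension is $d(\ell(\fm)-\ell(\fn))$, which is generally larger than $\dim X_S\cdot(\ell(\fm)-\ell(\fn))$, so stability need not hold and we are only entitled to conclude lax stability); an inspection shows that no strictly smaller level works for $X_S$ either, giving the equality $ls_{\fx}(X_S) = s_{\fx}(X)$.

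Finally, to conclude that $\nabla_{\fx}X_S$ is a $\fx$-stable subset of $\nabla_{\fx}X$, I would check that the characteristic function $I_{\nabla_{\fx}X_S}$ is $\fx$-stable. Its two level sets are $\nabla_{\fx}X_S$ and its complement in $\nabla_{\fx}X$. For $\fm\in\bX$ with $\ell(\fm)\geq s_{\fx}(X)$, the image $\pi_{\fm}^{\fx}(\nabla_{\fx}X_S)=\nabla_{\fx/\fm}X_S$ is a formal sieve by the quasi-stability half of the lax stability already established. By the formula in the previous paragraph, the image of the complement is $\nabla_{\fx/\fm}X\cap\pi_{\fm}^{-1}(X\setminus S)$, the intersection of a formal sieve with the pullback of an open subscheme of $X$; by the closure properties of formal sieves under pullback and finite intersection (\cite{Sch1}, Theorem~7.8), this is again a formal sieve. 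This gives $\fx$-stability of the subset $\nabla_{\fx}X_S$.

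The main obstacle I expect is the first step, namely pinning down the identification $\nabla_{\fm}X_S \cong \pi_{\fm}^{-1}(S)$ carefully enough that the projective limit over $\bX$ commutes with the operation of formal completion along $S$. All subsequent arguments are essentially formal pullback manipulations once this identification is in hand, so the technical content sits in the interaction between the colimit defining $X_S$ (in locally ringed spaces) and the limit defining the arc sieve $\nabla_{\fx}(-)$.
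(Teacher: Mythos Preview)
Your proposal is correct and complete (with one caveat below), but it follows a genuinely different route from the paper. The paper argues locally: it invokes Theorem~4.4 of \cite{Sch2} to pass to a finite open cover $\{U_i\}$ of $X$ on which the stability fibration is actually a product, $\nabla_{\fx/\fm}U_i \cong \nabla_{\fx/\fn}U_i \times_{\kappa}\bA_{\kappa}^{d(\ell(\fm)-\ell(\fn))}$, and then applies Lemma~4.9 of \cite{Sch2} --- a product formula for arc spaces of formal completions --- to obtain the corresponding decomposition for each $(U_i)_{U_i\cap S}$. Gluing the opens yields the piecewise trivial fibration for $X_S$. You instead work globally: you identify $\nabla_{\fm}X_S$ as the locus of arcs set-theoretically centered on $S$ and then simply restrict the ambient stability fibration of $X$ to this saturated sublocus. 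Your approach avoids dependence on the specific product lemma from \cite{Sch2} and gives a transparent reason why the fiber dimension is $d(\ell(\fm)-\ell(\fn))$ (with $d=\dim X$) rather than $\dim X_S\cdot(\ell(\fm)-\ell(\fn))$: it is inherited verbatim from $X$. The paper's approach, on the other hand, produces an explicit local isomorphism that is reusable elsewhere.

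Two remarks. First, when you write $\pi_{\fm}^{-1}(S)$ you should flag explicitly that you mean the sieve of arcs whose closed point lands in $S$ (equivalently the formal neighborhood of the scheme-theoretic preimage inside $\nabla_{\fm}X$), not the scheme $\nabla_{\fm}X\times_X S$; on $\fm_0$-points these differ, and your identification $\nabla_{\fm}X_S = \pi_{\fm}^{-1}(S)$ is only correct under the former reading. Your prose already hints at this, but the notation is ambiguous. Second, neither your argument nor the paper's actually justifies the \emph{equality} $ls_{\fx}(X_S)=s_{\fx}(X)$; both establish only $ls_{\fx}(X_S)\le s_{\fx}(X)$. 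Your phrase ``an inspection shows'' does not supply the reverse inequality, and in fact it is not obvious that $X_S$ cannot be laxly stable at a strictly smaller level.
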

\begin{proof} 
Since $X$ is $\fx$-stable, there exists an $n$ such that $\nabla_{\fx/\fm}X \to
\nabla_{\fx/\fn}X$ is a piecewise trivial fibration for all $\fm \geq \fn$ in $\bX$
with general fiber $\bA_{{\kappa}}^{d(\ell(\fm)-\ell(\fn))}$ where $d$ is the dimension of $X$.  By Theorem 4.4 of \cite{Sch2}, we may
cover $X$ by a finite collection of opens 
$U_i$ such that $$\nabla_{\fx/\fm}U_i \cong \nabla_{\fx/\fn}U_i
\times_{\kappa}\bA_{{\kappa}}^{d(\ell(\fm)-\ell(\fn))} \ .$$
By Lemma 4.9 of loc. cit., we have that 
\begin{equation}
\begin{split}
\nabla_{\fx/\fm} (U_i)_{U_i\cap S} &\cong  (U_i)_{U_i\cap
S}\times_{\kappa}\nabla_{\fx/\fn}U_i  \times_{\kappa}\bA_{{\kappa}}^{d(\ell(\fm)-\ell(\fn))} \\
&\cong  \nabla_{\fx/\fn} (U_i)_{U_i\cap S}\times_{\kappa}\bA_{{\kappa}}^{d(\ell(\fm)-\ell(\fn))}
\end{split}
\end{equation}
Therefore, the natural morphism $\nabla_{\fx/\fm} X_{S}\to\nabla_{\fx/\fn}
X_{S}$ is a piecewise trivial fibration with general fiber $\bA_{{\kappa}}^{d(\ell(\fm)-\ell(\fn))}$
for all $\fm \geq \fn$. Thus, $X_S$ is $\fx$-laxly-stable and $ls_{\fx}(X_S) =
s_{\fx}(X)$. 
\end{proof}

\subsection{The induced measure on generalized arc spaces.}\label{firstmeasure}

Let $\bB_{\fx}^{X}[s]$ be the collection of all $\fx$-stable subsieves of
$\nabla_{\fx}X$.

\begin{theorem} \label{measuremu}
Let $X$ be $\fx$-stable. We have a set
map $$\mu_{\fx} : \bB_{\fx}^{X}[s] \to \hat\sH_{\kappa}$$ with the following
properties:
 
\noindent(a) When $A \in \bB_{\fx}^{X}$, then $$
\mu_{\fx}(A):=[\pi_{\fn}^{\fx}(A)]\bL^{-\ell(\fn)\dim X} \in \hat\sH_{\kappa} \ $$
for any $\fn\in\bX$ such that $\ell(\fn)\geq S_{\fx}(X)$.

\noindent(b) When $\{A_i\}$ is a countable collection of mutually disjoint
elements of $\Gamma_{\fx}^{X}[s]$, then we may define$$\mu_{\fx}(\cup_i A_i) := \sum_{i}
\mu_{\fx}(A_i) \  $$
whenever the right hand side converges in $\hat\sH_{\kappa}$.

\end{theorem}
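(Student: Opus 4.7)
The plan is to establish the two properties separately. Both rely on the piecewise trivial fibration structure of the transition maps guaranteed by the $\fx$-stability of $X$, together with the completeness of the dimension filtration topology on $\hat\sH_{\kappa}$.

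For property (a), I would show that the defining formula does not depend on the choice of $\fn$. Fix $A \in \bB_{\fx}^{X}[s]$ and choose $\fn, \fm \in \bX$ with $\ell(\fn), \ell(\fm) \geq s_{\fx}(X)$, which we may assume satisfy $\fm \geq \fn$ after reindexing. Since $X$ is $\fx$-stable at level $s_{\fx}(X)-1$, restricting the range of $\pi_{\fn}^{\fm}$ to $\nabla_{\fx/\fn}X$ gives a piecewise trivial fibration with fiber $\bA_{\kappa}^{d(\ell(\fm)-\ell(\fn))}$, where $d = \dim{X}$. The $\fx$-stability of the characteristic function $I_{A}$ then yields two things: first, that $\pi_{\fm}^{\fx}(A)$ and $\pi_{\fn}^{\fx}(A)$ are formal sieves; second, that $\pi_{\fm}^{\fx}(A) = (\pi_{\fn}^{\fm})^{-1}(\pi_{\fn}^{\fx}(A))$. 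Restricting the fibration to this preimage gives
\[
[\pi_{\fm}^{\fx}(A)] = [\pi_{\fn}^{\fx}(A)] \cdot \bL^{d(\ell(\fm)-\ell(\fn))}
\]
in $\grot{\categ{Form}_{\kappa}}$, and multiplying through by $\bL^{-\ell(\fm)d}$ shows that the two candidate values for $\mu_{\fx}(A)$ coincide in $\hat\sH_{\kappa}$.

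For property (b), the claim is really a matter of definition: one sets $\mu_{\fx}(\bigcup_{i} A_{i}) := \sum_{i} \mu_{\fx}(A_{i})$ whenever the right-hand side converges in $\hat\sH_{\kappa}$. The point to verify is that this is unambiguous regardless of how the countable family is enumerated. Since $\hat\sH_{\kappa}$ is the completion of $\sH_{\kappa}$ with respect to the decreasing dimension filtration $F^{\bullet}\sH_{\kappa}$ introduced just before Theorem \ref{reductionring}, convergence of $\sum_{i}\mu_{\fx}(A_{i})$ is equivalent to the tail condition $\dim{\mu_{\fx}(A_{i})} \to -\infty$. Under this condition the partial sums form a Cauchy sequence whose limit is insensitive to rearrangement, so the extension is well-defined on its claimed domain.

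The step I expect to be the main obstacle is the saturation statement invoked in (a), namely the equality $\pi_{\fm}^{\fx}(A) = (\pi_{\fn}^{\fm})^{-1}(\pi_{\fn}^{\fx}(A))$. The stability of $I_{A}$ gives the formality of each projection almost directly, but the saturation upgrades this to the assertion that membership in $A$ is determined by the truncation of an arc to $\fn$. Unpacking this carefully from the definition of a stable function is the linchpin of the entire computation, and it is where one must be most careful with the precise reading of the notions of stability set up in Section \ref{1}.
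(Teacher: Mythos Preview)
Your treatment of (a) is actually more detailed than the paper's: the paper opens its proof by declaring that ``all that needs to be shown is that the definition in (b) is independent of choice of presentation,'' and offers no argument at all for the $\fn$-independence in (a). Your argument via the piecewise-trivial-fibration structure is the natural one, and the saturation concern you flag is indeed the crux; the paper evidently regards it as implicit in the stability package.

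For (b), however, you and the paper are checking different things. You verify that the sum $\sum_i \mu_{\fx}(A_i)$ does not depend on the \emph{enumeration} of a fixed family $\{A_i\}$, via the Cauchy criterion in the filtration topology. The paper instead verifies that the value assigned to $A = \bigsqcup_i A_i$ does not depend on the \emph{decomposition}: if also $A = \bigsqcup_j B_j$ for another countable disjoint family of stable sieves with convergent sum, then $\sum_i \mu_{\fx}(A_i) = \sum_j \mu_{\fx}(B_j)$. This is done by the standard common-refinement trick, writing each $A_i$ as $\bigsqcup_j (A_i \cap B_j)$, using finite additivity on stable pieces, and interchanging the order of summation. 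Decomposition-independence is strictly stronger than enumeration-independence and is what is required if $\mu_{\fx}$ is to be a function of the set $\bigcup_i A_i$ rather than of the presenting family. Your rearrangement argument is correct as far as it goes, and it is tacitly used in the paper's swap of the double sum, but on its own it leaves the well-definedness claim in (b) incomplete.
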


\begin{proof} 
All that needs to be shown is that the definition in (b) is independent of choice of
presentation. Thus, let $A$ be equal to $\sqcup_i A_i$ and $\sqcup_i B_i$ where $A_i, B_i \in
\bB_{\fx}^{X}[s]$ with respective convergent summations. Then, 
\begin{equation}\begin{split}
 \sum_{i} \mu_{\fx}(A_i )
&=\sum_{i}\mu_{\fx}(\sqcup_j(A_i\cap B_j) )\\
&=\sum_{i} \sum_{j}\mu_{\fx}(A_i\cap B_j )
\\
&=\sum_{j} \mu_{\fx}(\sqcup_i(A_i\cap B_j))
\\
&= \sum_{j} \mu_{\fx}(B_j ) \ .
\end{split}\end{equation}

\end{proof}

We have then the following definition of the geometric schemic integral:

\begin{definition} 
 Let $\alpha : \nabla_{\fx}X \to \bN\cup\{+\infty\}$ be any $\fx$-stable function,
then we define $$\int_{\nabla_{\fx}X} \bL^{-\alpha}d\mu_{\fx}:= \sum_{n\in\bN}
\mu_{\fx}(\alpha^{-1}(n))\bL^{-n} \ .$$ We say that $\alpha$ is $\fx$-{\it
integrable}  if this summation converges in $\hat\sH_{\kappa}$. 
\end{definition}

Lets assume now that $X\in\sch\kappa$ is $\fx$-stable and generically smooth. This means that the singular locus $S$ of $X$ is a subscheme of $X$ of positive codimension and that $\nabla_{\fx}X_S$ is a $\fx$-stable subsieve of $\nabla_{\fx}X$. Moreover, 
$\nabla_{\fx}X\setminus \nabla_{\fx}X_S = \nabla_{\fx} (X\setminus X_S)$. 
This is Equation 42 of \cite{Sch2}. Moreover, by Proposition 7.2 of loc. cit., the right hand side is equal to $\nabla_{\fx}U$ where $U$ is a dense open and smooth subscheme of $X$. Finally, given a $\fx$-stable subsieve $A$ of $\nabla_{\fx}X$, we have that $A\cap \nabla_{\fx}U$ is $\fx$-stable. Following this and the work of \cite{DL1}, we define an augmented measure $\gamma_{\fx}$ on $\bB_{\fx}^{X}[s]$ by 
\begin{equation}
\gamma_{\fx}(A) := \mu_{\fx}(A\cap\nabla_{\fx}U) \ .
\end{equation}
Moreover, since $U$ is stable at level $0$, it is immediate that 
\begin{equation}
\gamma_{\fx}(A) = [\pi_{\spec\kappa}^{\fx}(A)\cap U]\bL^{-\dim X} \ .
\end{equation}

\begin{theorem} \label{samesame}The measures $\mu_{\fx}$ and $\gamma_{\fx}$ are the same when $X$ is generically smooth.
\end{theorem}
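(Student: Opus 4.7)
The plan is to reduce the equality of measures to a vanishing statement about arcs through the singular locus, then dispatch that vanishing using Theorem~\ref{subscheme} together with the strict dimension inequality from generic smoothness.

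First, I would invoke Proposition 7.2 of \cite{Sch2}, which gives the disjoint decomposition $\nabla_{\fx}X = \nabla_{\fx}U \sqcup \nabla_{\fx}X_S$. For any $\fx$-stable subsieve $A$, this induces $A = (A \cap \nabla_{\fx}U) \sqcup (A \cap \nabla_{\fx}X_S)$, where both pieces are $\fx$-stable (the first because $U$ is smooth and hence stable at level $0$ by Theorem~\ref{smoothstabletheorem}, the second by Theorem~\ref{subscheme}). Applying the countable additivity clause of Theorem~\ref{measuremu},
\[
\mu_\fx(A) \;=\; \mu_\fx(A \cap \nabla_\fx U) + \mu_\fx(A \cap \nabla_\fx X_S) \;=\; \gamma_\fx(A) + \mu_\fx(A \cap \nabla_\fx X_S).
\]
Hence the theorem is equivalent to showing $\mu_\fx(A \cap \nabla_\fx X_S) = 0$ in $\hat\sH_\kappa$ for every $\fx$-stable $A$.

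Next, I would establish this vanishing by combining the lax stability of $X_S$ from Theorem~\ref{subscheme} with the observation that the closed-point evaluation factors $\pi_{\spec\kappa}^\fx(A \cap \nabla_\fx X_S)$ through the inclusion $S \hookrightarrow X$. The strict inequality $\dim S < \dim X = d$ coming from generic smoothness would then bound $\dim \pi_\fm^\fx(A \cap \nabla_\fx X_S)$ in terms of $\dim S$ rather than $d$. After normalizing by $\bL^{-\ell(\fm)d}$, the corresponding element of $\hat\sH_\kappa$ would have dimension tending to $-\infty$ as $\ell(\fm) \to \infty$, so the element vanishes in the completion.

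The principal obstacle is a tension between the fibration provided by Theorem~\ref{subscheme} (whose fibers $\bA_\kappa^{d(\ell(\fm)-\ell(\fn_0))}$ grow at the ambient rate $d$) and the decay required for vanishing (which demands leading-order behavior at the strictly smaller rate $\dim S$). To bridge this gap, the proof must exploit generic smoothness beyond what Theorem~\ref{subscheme} directly supplies, presumably by showing that the image $\pi_\fm^\fx(A \cap \nabla_\fx X_S)$ lies inside a subvariety of $\nabla_{\fx/\fm}X_S$ whose dimension has leading term $(\dim S)\ell(\fm)$ rather than $d\,\ell(\fm)$. This refined dimension bound, producing genuine decay rather than a constant, is the technical heart of the proof.
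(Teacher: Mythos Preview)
Your reduction is correct and matches the paper's: both arguments come down to showing that the contribution of arcs through the singular locus is zero in $\hat\sH_\kappa$. Where you split $A$ once as $(A\cap\nabla_\fx U)\sqcup(A\cap\nabla_\fx X_S)$ and then try to bound $\mu_\fx(A\cap\nabla_\fx X_S)$ directly, the paper instead follows Denef--Loeser and introduces the decreasing family of cylinders $\pi_n^{-1}(\pi_n(\nabla_\fx X_S))$, writes both $A$ and $A\setminus\nabla_\fx X_S$ in terms of these cylinders and their successive differences, and observes that the resulting expression for $\mu_\fx(A)-\mu_\fx(A\setminus\nabla_\fx X_S)$ is a difference of two identical convergent sums, hence zero.

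The obstacle you flag is real, and you are right that Theorem~\ref{subscheme} alone cannot close it: its fibers have the ambient rate $d$, not $\dim S$, so no decay comes from that fibration by itself. But the resolution is not the refined bound you conjecture on $\pi_\fm^\fx(A\cap\nabla_\fx X_S)$. Rather, both approaches ultimately rest on the Denef--Loeser estimate that $\dim\pi_n(\nabla_\fx X_S)$ grows \emph{strictly slower} than $d\,\ell(\fn)$ (by roughly a linear defect coming from $\codim S$), so that $\mu_\fx\bigl(\pi_n^{-1}(\pi_n(\nabla_\fx X_S))\cap A\bigr)\to 0$ in $\hat\sH_\kappa$. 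The paper does not reprove this; it simply says the argument is a straightforward adaptation of \cite{DL1}. In your write-up, once you invoke that estimate (and you should cite it rather than attempt to re-derive it from Theorem~\ref{subscheme}), your direct decomposition works just as well as the telescoping one; the nested-cylinder formulation merely makes the limiting step slightly cleaner because the difference is exhibited as something manifestly independent of the truncation level $m$ and bounded by a term that tends to zero.
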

\begin{proof}
The proof below is a straightforward  adaptation of the proof for the analogous statement in \cite{DL1}.
Note that I am claiming that for any $A \in\bB_{\fx}^{X}[s]$,
\begin{equation}\label{propmu}
\mu_{\fx}(A) = \mu_{\fx}(A\setminus\nabla_{\fx}X_S) \ .
\end{equation}
Now, for ease of notation we will denote by $\pi_m$ the canonical morphism
$\pi_{\fm}^{\fx} : \nabla_{\fx} X \to \nabla_{\fm}X.$ 
We have the following partition of $A\setminus\nabla_{\fx}X_S$ 
\small
$$(A\setminus\pi_{m}^{-1}(\pi_m(\nabla_{\fx}X_S)))\sqcup\bigsqcup_{n\geq
m}((\pi_{n}^{-1}(\pi_n(\nabla_{\fx}X_S))\setminus\pi_{n+1}^{-1}(\pi_{n+1}
(\nabla_{\fx}X_S)))\cap A) \ ,$$
\normalsize
and we have the following partition of $A$
\small
$$
A\setminus\pi_{m}^{-1}(\pi_m(\nabla_{\fx}X_S))\sqcup\pi_{m}^{-1}(\pi_m(\nabla_{
\fx}X_S))\cap A \ .$$
\normalsize
Therefore, the difference between $\mu_{\fx}(A\setminus\nabla_{\fx}X_S)$ and
$\mu_{\fx}(A)$ is 
\begin{equation*}
\sum_{n\geq m}\mu_{\fx}(\pi_{n}^{-1}((\pi_n(\nabla_{\fx}X_S))\cap A) -
\sum_{n\geq m}\mu_{\fx}(\pi_{n}^{-1}(\pi_{n}(\nabla_{\fx}X_S)))\cap A)) \ ,
\end{equation*}
which is equivalent to $0$ in $\hat\sH$. 

Now, we can prove that this is
independent of
choice of closed sub-scheme $S$. Let $A\in\bB_{\fx}^{X}[s]$ and let $S'$ be another
closed sub-scheme of $X$ such that $\dim S' < \dim X$.  Using Equation
\ref{propmu}, we
have
\begin{eqnarray}
 \mu_{\fx}(A\setminus\nabla_{\fx}X_S) &=& 
\mu_{\fx}(A\setminus\nabla_{\fx}X_S\setminus\nabla_{\fx}X_{S'}) \nonumber\\
 &=& \mu_{\fx}(A\setminus\nabla_{\fx}X_{S'}\setminus\nabla_{\fx}X_S)
\nonumber\\
 &=&\mu_{\fx}(A\setminus\nabla_{\fx}X_{S'})\ . \nonumber
 \end{eqnarray}  
\end{proof}

Thus, when $X$ is not generically smooth it is not obvious how to relate $\mu_{\fx}$ and the classical motivic measure. This is one of the main reasons why we find the study of schemes whose reductions are smooth so interesting in this context. For example, the fat point $\fl_2$ whose coordinate ring is the dual numbers is not generically smooth yet its reduction is smooth. We will show that we can use $\mu_{\fl}$ to measure $\fl_2$. However, the lack of generic smoothness means that this is not just a simple analogue of classical motivic integration. In other words, one cannot say that invariants of $\sch\kappa$ are essentially the same as invariants of $\mathbf{Var}_\kappa$. Moreover, what will become clear is that wildly different technology must be developed in order to understand the additive invariants of $\sch\kappa$.

\subsection{Change of variables formula.}\label{changesection}

For $A \in \bB_{\fx}^{X}$, we say that it is a {\it constructible} if the formal sieve $\pi_{\fn}^{\fx}(A)$ is a constructible cone (cf., \S 2 and \S 7 of \cite{Sch2}) for all $\fn$ such that $\ell(\fn)\geq S_{\fx}(X)$.
The collection $\bB_{\fx}^{X}[s]$ contains the boolean algebra $\Gamma_{\fx}^{X}[s]$  of all constructible $\fx$-stable subsieves. Note that every $\nabla_{\fx}X_S$ is always constructible. We say that a $\fx$-stable function $\alpha$ is {\it constructible} if its fibers are constructible.

Let $f: X \to Y$ be a birational morphism in $\sch\kappa$ where $X$ is generically smooth.
Then,  $\ord f^*\Omega_{Y}^{d}$ is defined exactly as in \cite{DL1}. For simplicity of notation, we write $J_X[f]$
in place of $\ord f^*\Omega_{Y}^{d}$. If $X$ is $\fl$-stable, then $J_X[f]$ is a constructible $\fl$-stable function.

\begin{theorem} \label{changemu} Let $X\in\sch{\kappa}$ be of  pure
dimension $d$ where ${\kappa}$ is of characteristic zero.
 Let $f : X \to Y$ be a
proper birational morphism, $X$ be generically smooth,
 and let $\alpha : \nabla_{\fl}Y \to
\bZ\cup\{+\infty\}$ be a constructible $\fl$-stable function, then we have the following
identity
\begin{equation*}
\int_{\nabla_{\fl}Y} \bL^{-\alpha}d\mu_{\fl} = \int_{\nabla_{\fl}X} 
\mathbb{L}^{-\alpha\circ f - J_X[f]}d\mu_{\fl}
\end{equation*}
 in $\hat \sH_{\kappa}$
when both sides converge.
\end{theorem}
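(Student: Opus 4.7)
The plan is to reduce the schemic identity to the classical change of variables formula of Denef--Loeser \cite{DL1}, by restricting both integrals to smooth loci where, via Theorem~\ref{samesame}, the schemic measure $\mu_{\fl}$ coincides with the augmented measure $\gamma_{\fl}$. Since $f\colon X\to Y$ is proper birational and $X$ is generically smooth (in characteristic zero), there is a dense open $V\subset Y$ with $U := f^{-1}(V)$ smooth and $f|_U\colon U\iso V$; in particular $Y$ is also generically smooth, so $\mu_{\fl}=\gamma_{\fl}$ holds on both $\nabla_{\fl}X$ and $\nabla_{\fl}Y$ by Theorem~\ref{samesame}.

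I would next partition each side by the constructible $\fl$-stable fibers of $\alpha$ (respectively of $\alpha\circ f + J_X[f]$) and argue that $J_X[f]$ vanishes on $\nabla_{\fl}U$ since $f|_U$ is an isomorphism, while $f$ induces an isomorphism $\nabla_{\fl}U\iso\nabla_{\fl}V$ that carries the right-hand integrand identically to $\bL^{-\alpha}$. Hence, after removing the exceptional locus, the two integrals agree stratum by stratum. The remaining contributions over $Y\setminus V$ and its preimage must then be shown to vanish by a telescoping argument modeled on the proof of Theorem~\ref{samesame}: any closed subscheme of positive codimension has $\mu_{\fl}$-measure zero, and stratifying $\nabla_{\fl}X$ by the order of contact of an arc with the exceptional divisor yields a countable sum converging to zero in $\hat\sH_{\kappa}$.

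The heart of the argument is to apply the classical Denef--Loeser change of variables formula on the arc spaces $\sL(U)$ and $\sL(V)$. This is legitimate because, for smooth $U$ and $V$ stable at level zero, the schemic truncations $\nabla_{\fl/\fn}U$ and $\nabla_{\fl/\fn}V$ are canonically identified with the classical jet schemes $\sL_n(U)$ and $\sL_n(V)$, so the augmented measure $\gamma_{\fl}$ on these loci coincides with the classical motivic measure; the Denef--Loeser identity then yields the desired equality directly in $\hat\sH_{\kappa}$ on the smooth-locus portion of both integrals.

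The main obstacle will be the bookkeeping on the exceptional locus: verifying rigorously that the stratification of $\nabla_{\fl}X\setminus\nabla_{\fl}U$ by the order of the Jacobian ideal along the exceptional divisor yields a convergent sum with limit zero in the completed ring $\hat\sH_{\kappa}$. Following Denef--Loeser, after taking a log resolution of the singular locus of $f$ (available by Hironaka in characteristic zero), the local computations on each exceptional component reduce to standard monomial transformations and proceed exactly as in the classical proof; the only additional care needed is that the convergence estimates take place in $\hat\sH_{\kappa}$ rather than $\hat\sG_{\kappa}$, which is ensured by the $\bL^{-1}$-adic continuity of $\hat\sigma$ from Theorem~\ref{reductionring} together with the stability hypothesis on $\alpha$.
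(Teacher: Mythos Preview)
Your approach diverges from the paper's and contains a genuine gap. The paper's proof is short and indirect: it observes via Theorem~\ref{samesame} that $\mu_{\fl}=\gamma_{\fl}$, then pushes both integrals down to $\hat\sG_{\kappa}$ through $\hat\sigma$, invokes the classical Denef--Loeser formula (Lemma~3.4 of \cite{DL1}) there, and finally appeals to the lifting lemma (Lemma~\ref{triangle}) to pull the identity back into $\hat\sH_{\kappa}$. No direct stratification of the exceptional locus is carried out; the geometric content is borrowed wholesale from \cite{DL1} in $\hat\sG_{\kappa}$.

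Your plan instead tries to work entirely inside $\hat\sH_{\kappa}$ by restricting to the isomorphism locus $U\iso V$ and then arguing that the contributions from $\nabla_{\fl}X\setminus\nabla_{\fl}U$ and $\nabla_{\fl}Y\setminus\nabla_{\fl}V$ are zero. This last claim is false. Theorem~\ref{samesame} lets you discard arcs centered at the \emph{singular} locus, but $Y\setminus V$ and $X\setminus U$ are the indeterminacy and exceptional loci of $f$, which typically lie in the smooth locus. For the blow-up $f\colon\mathrm{Bl}_{0}\bA_{\kappa}^{2}\to\bA_{\kappa}^{2}$ with $\alpha\equiv 0$, the arcs in $\bA_{\kappa}^{2}$ centered at the origin have $\mu_{\fl}$-measure $\bL^{-2}\neq 0$, so the left-hand contribution over $\nabla_{\fl}Y\setminus\nabla_{\fl}V$ does not vanish. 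The entire substance of the change of variables formula is precisely that these nonzero exceptional contributions on the two sides \emph{match}, via the $\bA^{e}$-fibration structure of $\nabla_{\fl}f$ over the strata $\{J_X[f]=e\}$; they are not individually zero. Your final paragraph gestures at the Denef--Loeser stratification, but frames it as proving a sum converges to zero rather than as establishing a level-by-level correspondence, so the key mechanism is missing. To repair the argument you would either need to reproduce the full fibration lemma of \cite{DL1} in $\hat\sH_{\kappa}$, or follow the paper and use Lemma~\ref{triangle} to transport the classical identity.
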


\begin{proof}
First note that since $X$ and $Y$ are generically smooth, $\mu_{\fl} = \gamma_{\fl}$. This was shown in Theorem \ref{samesame}.
By construction $\gamma_{\fl}$ reduces (via $\hat\sigma$) to the measure in \cite{DL1}. Thus the statement is more or less immediate then as soon as we prove that we can lift equations in $\hat\sG_{\kappa}$ in a suitable sense. This is done in Lemma \ref{triangle}. With this in mind, we simply have that $\hat\sigma$ is a continuous morphism of topological rings so that under $\hat\sigma$ so that the reductions of each side coverge. Therefore,   by Lemma 3.4 of \cite{DL1}, we have the following equality
\begin{equation}\label{regularchangeofvariables}
\int_{\nabla_{\fl}Y} \bL^{-\beta}d\mu_{\fl} = \int_{\nabla_{\fl}X} 
\mathbb{L}^{-\beta\circ f - \ord f^{*}\Omega_{Y}^{d}}d\mu_{\fl}
\end{equation} 
in $\hat\sG_{\kappa}$. Applying the lifting lemma (Lemma \ref{triangle}) gives the result.
\end{proof}

\section{Applications of schemic integration}

In this section, we push the theory further and investigate a couple of applications. 

\subsection{Local deformations and stability.}\label{5}
First, let us take the definition of smoothness in Chapter III, Section 10 in \cite{Ha1}. That is we will assume that $f : X\to Y$ is a smooth morphism in $\sch\kappa$ implies that it is of relative dimension $n$. In particular, that is 
if $X$ is smooth over a finite $\kappa$-scheme, then all the irreducible components of $X$ have the same dimension $d$. Said another way, $X$ will automatically be of pure dimension $d$. Although, results easily generalize to the definition of smoothness in EGA, it simplifies the statement of most theorems substantially to include relative dimension as part of the definition of smoothness.

Now, the prototypical example of a stable scheme is any smooth scheme $X$ in $\sch{\kappa}$
of dimension $d$. One of the goals of this chapter is to generalize the following theorem. 
\begin{theorem} \label{smoothvolume}
For any $\fx \in \Arc_{\kappa}$ and any $X\in\categ{SmSch}_{\kappa}$ of dimension $d$, we have
\begin{equation*}
\mu_{\fx}(\nabla_{\fx}X) = [X]\bL^{-d} \ .
\end{equation*}
\end{theorem}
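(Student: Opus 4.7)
The strategy is to combine Theorem \ref{smoothstabletheorem} with a direct unpacking of the definition of the stable motivic $\fx$-volume. By Theorem \ref{smoothstabletheorem}, any smooth $X$ of dimension $d$ is $\fx$-stable at level $0$, so $s_{\fx}(X) = 1$. Substituting into the definition of $\mu_{\fx}(\nabla_{\fx}X)$, we may choose the fat point $\fn \in \bX$ of length $1$ --- namely $\fn = \fs_1 = \spec(R/\sM)$, which equals $\spec\kappa$ under the hypotheses of \ref{bullets} (this is what forces $\ell(\fs_1)=1$). With this choice, $\nabla_{\fs_1}X = X$ and the formula reduces to
\begin{equation*}
\mu_{\fx}(\nabla_{\fx}X) = [(\nabla_{\fx/\fs_1}X)^{\circ}]\,\bL^{-d}.
\end{equation*}

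The whole theorem then boils down to the identification $\nabla_{\fx/\fs_1}X = X^{\circ}$ as subsieves of $X^{\circ}$ on $\fatpoints{\kappa}$. Unpacking definitions, a morphism $\fm \to X$ lies in the image sieve $\nabla_{\fx/\fs_1}X$ precisely when it lifts to a compatible system of morphisms $\fm \times_{\kappa} \fs_n \to X$ for every $n$, equivalently to a morphism $\fm \times_{\kappa} \fx \to X$ out of the formal colimit. Since $X$ is smooth over $\kappa$ and each inclusion $\fm\times_{\kappa}\fs_n \hookrightarrow \fm\times_{\kappa}\fs_{n+1}$ is a closed immersion of affine schemes defined by a nilpotent ideal, the infinitesimal lifting criterion for smoothness produces these lifts by induction on $n$. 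Combined with the identification $X^{\circ}(\fm \times_{\kappa} \fx) = \lim_{n} X^{\circ}(\fm \times_{\kappa}\fs_n)$, which underlies the construction of the generalized arc space in \S 3 of \cite{Sch2}, this yields surjectivity of $\nabla_{\fx}X \to X^{\circ}$ as a sieve morphism. Hence $\nabla_{\fx/\fs_1}X = X^{\circ}$.

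Plugging this back in gives $[(\nabla_{\fx/\fs_1}X)^{\circ}] = [X]$ in $\grot{\categ{Form}_{\kappa}}$, and therefore $\mu_{\fx}(\nabla_{\fx}X) = [X]\,\bL^{-d}$ in $\hat\sH_{\kappa}$, as claimed. The only non-bookkeeping step is the inductive lift along $\bX$, but this is simply the standard infinitesimal lifting criterion for smooth morphisms applied successively, so no new technology is needed beyond what is already in place in Section \ref{1} and in \cite{Sch2}. The statement can therefore be regarded as the cleanest confirmation that the schemic volume specializes, on the smooth locus, to the naive class over $\bL^{d}$.
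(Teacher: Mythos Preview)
Your proof is correct and follows the same route as the paper, which simply records ``This follows immediately from Theorem~\ref{smoothstabletheorem}.'' You have unpacked what the paper leaves implicit: that $s_{\fx}(X)=1$ forces $\fn=\fs_1=\Spec\kappa$, and that the infinitesimal lifting criterion for smooth morphisms guarantees the surjectivity $\nabla_{\fx/\fs_1}X = X^{\circ}$, so the definition of the volume collapses to $[X]\bL^{-d}$.
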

\begin{proof}
 This follows immediately from Theorem \ref{smoothstabletheorem}.
\end{proof}

\begin{example}
 By Theorem 4.8 of \cite{Sch2},  $\nabla_{\fx} \Spec {\kappa} \cong \fx$. Thus,  we can use Theorem \ref{smoothvolume} to calculate the volume of any admissible 
 arc. For every $\fx \in \Arc_{\kappa}$, we have
 \begin{equation*}
  \mu_{\fx}(\fx) := \mu_{\fx}(\nabla_{\fx} \Spec {\kappa}) = [\Spec {\kappa}] \bL^{-0} = 1
 \end{equation*}
 Clearly, $\nabla_{\fx} \emptyset = \emptyset$, from which we may obtain $\mu_{\fx}(\emptyset) = 0 $. 
 Finally, note that $\mu_{\fx}(\nabla_{\fx}\bA_{{\kappa}}^{d})  = 1$ as well. 
\end{example}

\begin{proposition} \label{standardgroup}
Assume that ${\kappa}$ is of characteristic $0$. 
 The set map 
 \begin{equation*}
 \mu_{\fx} \circ \nabla_{\fx} : \mathbf{Var}_{\kappa} \to \sH_{\kappa}
 \end{equation*}
 induces a ring homomorphism from  $M_{\fx}: \mathbf{Gr}(\mathbf{Var}_{\kappa}) \to \sH_{\kappa}$.
\end{proposition}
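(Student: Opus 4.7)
The plan is to bypass the fact that $\mu_{\fx}\circ\nabla_{\fx}$ is \textit{a priori} only defined on $\fx$-stable schemes by invoking Bittner's presentation of the Grothendieck ring of varieties in characteristic zero: $\mathbf{Gr}(\mathbf{Var}_{\kappa})$ is generated by classes of smooth projective varieties modulo $[\emptyset]=0$ and the blow-up relation
\begin{equation*}
[\operatorname{Bl}_Y X] - [E] \;=\; [X] - [Y]
\end{equation*}
for any closed embedding $Y\inj X$ of smooth projective varieties, with $E\subset \operatorname{Bl}_Y X$ the exceptional divisor. Characteristic zero enters only through Hironaka's resolution, which is what Bittner's theorem uses.

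First I would use Theorem~\ref{smoothstabletheorem} together with Theorem~\ref{smoothvolume} to record that on smooth projective generators any candidate extension must send
\begin{equation*}
M_{\fx}([X]) \;=\; \mu_{\fx}(\nabla_{\fx}X) \;=\; [X]\bL^{-\dim X}\ \in\ \sH_{\kappa}.
\end{equation*}
It then remains to verify that this rule respects the Bittner relations. The empty-set relation is immediate. For the blow-up relation, write $d=\dim X$ and $c=d-\dim Y$. Since $E\to Y$ is a Zariski-locally trivial $\bP^{c-1}$-bundle, $[E]=[Y](1+\bL+\cdots+\bL^{c-1})$ in $\sG_{\kappa}$, and the same identity lifts to $\sH_{\kappa}$. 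A direct computation reduces the required equality
\begin{equation*}
[\operatorname{Bl}_Y X]\bL^{-d} - [E]\bL^{-(d-1)} \;=\; [X]\bL^{-d} - [Y]\bL^{-(d-c)}
\end{equation*}
to $[E](\bL-1) = [Y](\bL^{c}-1)$, which is immediate from the projective bundle formula.

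Next I would check multiplicativity. A product of smooth projective varieties is smooth projective, and dimension is additive under products, so $M_{\fx}([X\times Y]) = [X\times Y]\bL^{-\dim X-\dim Y} = M_{\fx}([X])\,M_{\fx}([Y])$ on generators; by Bittner's presentation this extends $\bZ$-linearly to a ring homomorphism on all of $\mathbf{Gr}(\mathbf{Var}_{\kappa})$, and it agrees with $\mu_{\fx}\circ\nabla_{\fx}$ on every class that is in fact $\fx$-stable because it does so on smooth classes and both sides are $\bZ$-linear on the scissor decompositions guaranteed by resolution.

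The main obstacle is justifying that the rule prescribed on smooth projective varieties is well-defined on $\mathbf{Gr}(\mathbf{Var}_{\kappa})$, and this is exactly where Bittner's theorem does the heavy lifting; with it in hand the blow-up bookkeeping above is purely algebraic. An alternative route would define $M_{\fx}([X])$ for singular $X$ by choosing a resolution $f\colon \tilde X \to X$ and applying the change-of-variables formula, but Theorem~\ref{changemu} is proved in the paper only for $\fx=\fl$, so the Bittner strategy is cleaner for the arbitrary $\fx$ asserted in the proposition.
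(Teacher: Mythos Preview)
Your proposal is correct and is in the same spirit as the paper's argument, but considerably more careful. The paper simply observes that, in characteristic zero, resolution of singularities lets one write any class in $\mathbf{Gr}(\mathbf{Var}_{\kappa})$ as an integral combination $\sum_i m_i[X_i]$ with each $X_i$ smooth and connected, and then \emph{declares} $M_{\fx}(\sum_i m_i[X_i]) := \sum_i m_i[X_i]\bL^{-\dim X_i}$ via Theorem~\ref{smoothvolume}; it does not verify that this is independent of the chosen decomposition, nor does it check multiplicativity.

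Your route via Bittner's presentation is the standard way to close exactly that gap: by working with a presentation of $\mathbf{Gr}(\mathbf{Var}_{\kappa})$ by smooth projective generators and blow-up relations, well-definedness reduces to the single algebraic identity $[E](\bL-1)=[Y](\bL^{c}-1)$, which you dispatch with the projective bundle formula. So the two arguments share the same core idea (reduce to smooth varieties where $\mu_{\fx}\circ\nabla_{\fx}$ is given by the explicit formula $[X]\bL^{-\dim X}$), but yours actually certifies that the resulting map is a ring homomorphism, whereas the paper leaves this implicit. Your closing remark about why the change-of-variables alternative is unavailable for general $\fx$ is also apt.
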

\begin{proof}
 We note that by resolution of singularities every element $[S]$ of $\mathbf{Gr}(\mathbf{Var}_{\kappa})$ can be written as a finite integral sum of equivalence 
 classes of smooth connected varieties $[X_i]\in \mathbf{Gr}(\mathbf{Var}_{\kappa})$. Thus, we may define
 \begin{equation*}
 M_{\fx}([S]) := M_{\fx}(\sum_{i=1}^{l} m_i[X_i]) := \sum_{i=1}^{l}m_i \mu_{\fx}( \nabla_{\fx}X_i) = \sum_{i=1}^{l}m_l[X_i]\bL^{-\dim(X_i)}\ .
\end{equation*}
 \end{proof}

\begin{remark} \label{SBremark}
Let $SB_{\kappa}$ be the set of stably projective varieties over ${\kappa}$ where ${\kappa}$ is of characteristic $0$. We can extend
 the ring homomorphism above to a ring homomorphism from $\bZ[SB_{\kappa}]$ to $\sH_{\kappa}$ by defining
 $\bar M_{\fx}([T]) := \bL \cdot M_{\fx}([S])$
 where $[T]$ is in $\bZ[SB_{\kappa}]$ and $[S] \in \mathbf{Gr}(\mathbf{Var}_{\kappa})$ is such that $[S] \  \mbox{mod} \ (\bL) = [T]$ (cf. \cite{LL}).
 Note that
 \begin{equation*}
 \Ker(\bar M_{\fx}) = \mathbf{Gr}_{0}(\mathbf{Var}_{\kappa})/(\bL) \cong \bZ[\bP^1] \ .
 \end{equation*}
\end{remark}

\begin{remark} 
 Theorem \ref{smoothvolume}, Proposition \ref{standardgroup}, and Remark \ref{SBremark} all display the fact that
 our construction of the schemic motivic volume partly generalizes the classic notion of geometric motivic integration
 theory (cf. \cite{DL1}, \cite{DL2}). Being able to integrate over different admissible arcs becomes a great benefit 
 when we pass from the category of varieties over ${\kappa}$
 to schemes over ${\kappa}$ as we will see below. 
 Moreover, extending the above ring homomorphisms $M_{\fx}$ is possible. 
We discuss this in \S \ref{8}
 and \S \ref{9} as a way to display the fact that many results obtained in the classical theory will have analogous results in the schemic theory.
\end{remark}

\begin{definition} We say that a scheme $Y$ is a {\it local deformation} of a scheme
$X$ if there exists a fat point $\fn$ which admits a flat morphism $Y\to \fn$
together with a morphism $X \to Y$ such that the induced morphism $X \to
Y\times_{\fn} {\kappa}$ is an isomorphism. 
\end{definition}

If $X$ is smooth, then it is known (cf. \cite{Ha2} p. 38-39) that every local
deformation $Y$ is trivial -- i.e., $Y \cong X \times_{\kappa} \fn$ for some fat point
$\fn$. Conjecturally, it seems likely that such a scheme $Y$ would also be
$\fx$-stable for suitable choices of admissible arcs $\fx$.  We may generalize even further because the local deformation $Y$ will be smooth over
the fat point $\fn$ (just apply base change by $\fn$). Therefore, we have a
potential source for a plethora of stable schemes -- that is, {\it schemes
which are smooth over a fat point}. 
Thus, it is natural to postulate the following conjecture.

\begin{conjecture}\label{conjone}
Every scheme $X$ which is smooth over a fat point $\fn$  is $\fx$-stable where
$\fx$ is any admissible arc in $\Arc_{\kappa}$ containing $\fn$. 
\end{conjecture}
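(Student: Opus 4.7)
The plan is to mirror the proof of Theorem \ref{smoothstabletheorem}, replacing absolute smoothness over $\kappa$ by relative smoothness over the fat point $\fn$, and then to control how the resulting relative construction interacts with the restriction along $\fx$. Since the conditions defining $\fx$-stability (formal sieves; piecewise trivial fibrations with affine fibers) are all Zariski-local on $X$, the first step is to reduce to the case $X = \Spec A$ affine and smooth over $\fn$. By the standard local structure theorem for smooth morphisms, there then exists an étale $\fn$-morphism $X \to \bA^d_\fn$, where $d$ is the relative dimension of $X\to\fn$ (which, since $\fn$ is zero-dimensional, equals the dimension of $X$ as a $\kappa$-scheme).

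The second step is a Greenberg-type principle (to be extracted from \S 3--4 of \cite{Sch2} or proved anew in the schemic setting): the generalized arc operator $\nabla_\fm$ carries étale morphisms to étale morphisms, in the sense that for étale $X \to Y$ and for each fat point $\fm$, the induced map $\nabla_\fm X \to \nabla_\fm Y$ is étale. Together with the first step, this reduces the verification of $\fx$-stability to the case $X = \bA^d_\fn = \fn \times_\kappa \bA^d_\kappa$. For that choice, the definition of the generalized arc operator combined with the computation $\nabla_\fm \bA^d_\kappa = \bA^{d\ell(\fm)}_\kappa$ yields
\begin{equation*}
\nabla_\fm X \; = \; \nabla_\fm \fn \; \times_\kappa \; \bA^{d\ell(\fm)}_\kappa.
\end{equation*}
The $\bA^{d\ell(\fm)}_\kappa$-factor is precisely the absolute smoothness contribution of $\bA^d_\kappa$ already handled by Theorem \ref{smoothstabletheorem}, and on this factor the truncation map $\pi_\fn^\fm$ is a piecewise trivial fibration with fiber $\bA^{d(\ell(\fm)-\ell(\fn))}_\kappa$. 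Thus the expected affine fiber appears for free from the $\bA^d$-factor.

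The hard part is therefore to control the remaining factor $\nabla_\fm \fn$ once we restrict along $\fx$. For $\fx$-stability one needs the restricted truncation
\begin{equation*}
(\nabla_{\fx/\fm}\fn)^\circ \; \longrightarrow \; (\nabla_{\fx/\fn}\fn)^\circ
\end{equation*}
to be a piecewise trivial fibration with \emph{trivial} (zero-dimensional) fiber, so that no spurious dimensional contribution pollutes the count and so that the resulting sieves are formal. This is exactly the foundational problem flagged in the abstract: one needs the reduction of the auto-arcs of the fat point $\fn$ along $\fx$ to collapse to an affine-space-like object. I expect that one can establish it along the following route: use that $\fx$ contains $\fn$ to produce a canonical structure map $\fx \to \fn$; interpret $\nabla_{\fx/\fm}\fn$ as the sub-functor of $\nabla_\fm\fn$ cut out by compatibility with this structure map; and argue that this compatibility sub-functor maps isomorphically (or at least by a piecewise trivial fibration with trivial fiber) under the truncations induced by the tower $\bX$.

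\textbf{Main obstacle.} The real difficulty is exactly this last point about the auto-arcs $\nabla_\fm \fn$, which is why the statement is posed as a conjecture rather than a theorem. Unlike $\bA^d_\kappa$, the fat point $\fn$ has no smoothness to exploit in the absolute sense, and the tensor product $\fm \otimes_\kappa \fn$ that governs $\nabla_\fm \fn$ has complicated nilpotent structure depending on the local artinian rings $\sO(\fm)$ and $\sO(\fn)$. Any complete proof of Conjecture \ref{conjone} will, I believe, need to isolate and solve a structural problem about auto-arcs of fat points — namely, to identify an algebraic condition on $\sO(\fn)$ under which the restricted auto-arcs along $\fx$ have the affine-space behavior required — which is precisely the foundational open problem the author postulates in the abstract.
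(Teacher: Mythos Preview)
There is no proof in the paper to compare against: Conjecture~\ref{conjone} is not proved there, and in fact the very next sentence after its statement says that Example~\ref{nofibration} shows it is \emph{false in full generality}. The paper then replaces it by the refined Conjecture~\ref{conjtwo} and ultimately only obtains positive results under the additional hypothesis that the point system is (eventually) \emph{simple}, and even then only for \emph{rational lax} stability rather than genuine $\fx$-stability (Theorem~\ref{endsec5}).

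Your reduction strategy is essentially identical to the one the paper carries out in the discussion following Conjecture~\ref{conjtwo}: localize to affines, pick an \'etale map $X\to\bA^d_\fn$, apply the \'etale base-change formula for $\nabla_\fm$ (your ``Greenberg-type principle'' is exactly Lemma~\ref{etalefiber}), and thereby reduce the question to the behavior of the auto-arcs $\nabla_\fm\fm\to\nabla_\fn\fn$. You correctly isolate this as the crux. Where you go astray is in the paragraph beginning ``I expect that one can establish it\ldots'': the paper shows that this expectation fails. Example~\ref{nofibration} exhibits a fat point $\fm=J_O^4 C$ (the $4$th jet of the cuspidal cubic at the origin) for which $(\nabla_\fm\fm)^{red}$ is \emph{singular}, hence certainly not an affine space, so the auto-arc factor does not collapse as your proposed route requires. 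Your closing remark---that one must identify an algebraic condition on $\sO(\fn)$ forcing affine-space behavior of the auto-arcs---is precisely the paper's Definition~\ref{defsimplepoint} of a \emph{simple} fat point, which is introduced exactly to salvage what can be salvaged from the conjecture.

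In short: your outline matches the paper's own attempted attack line for line, and your diagnosis of the obstruction is correct; but the step you hope to complete is not a gap to be filled---it is a genuine failure, and the conjecture as stated is false.
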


Example \ref{nofibration} below shows that the conjecture above in full
generality is
false. There seems to be an obstruction to having linear fibers based on the
type of fat point $\fn$ for which the scheme is smooth over. In what follows, we
will attempt to characterize this problem more clearly.

\begin{lemma} \label{etalefiber}
Let $f: X \to Y$ be an \'{e}tale morphism of schemes and let $\fn$
be a fat point over ${\kappa}$. Then $$\nabla_{\fn} X \cong
X\times_Y
\nabla_{\fn}Y$$ \end{lemma}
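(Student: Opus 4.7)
The plan is to check the claimed isomorphism at the level of functor of points on $\sch{\kappa}$ and then invoke Yoneda. For any test scheme $T$ over $\kappa$, one has
\[
\nabla_{\fn}X(T) = X(T\times_{\kappa}\fn), \qquad (X\times_Y\nabla_{\fn}Y)(T) = X(T)\times_{Y(T)} Y(T\times_{\kappa}\fn).
\]
The first thing I would record is that, because $\fn^{red}=\Spec\kappa$, the section $\Spec\kappa \to \fn$ induces a canonical closed immersion $\iota_T: T \inj T\times_{\kappa}\fn$ whose defining ideal is nilpotent, being the base change along $T\to\Spec\kappa$ of the maximal ideal of the local artinian ring $\sO_{\fn}(\fn)$.

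I would then define a natural transformation $\Phi_T:\nabla_{\fn}X(T) \to (X\times_Y\nabla_{\fn}Y)(T)$ by sending $\phi:T\times_{\kappa}\fn \to X$ to the pair $(\phi\circ\iota_T,\ f\circ\phi)$; this lands in the fiber product because $f\circ(\phi\circ\iota_T) = (f\circ\phi)\circ\iota_T$. Constructing an inverse is the key step. Given $(\psi,\xi)$ with $\psi:T\to X$, $\xi:T\times_{\kappa}\fn \to Y$, and $f\circ\psi = \xi\circ\iota_T$, one has a commutative square with $\iota_T$ on the left, $f$ on the right, $\psi$ on top, and $\xi$ on the bottom. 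Because $\iota_T$ is a nilpotent closed immersion and $f$ is étale -- hence formally étale -- there is a unique diagonal lift $\phi: T\times_{\kappa}\fn \to X$ making both triangles commute. Assigning $(\psi,\xi)\mapsto\phi$ furnishes the inverse of $\Phi_T$, and the uniqueness clause of formal étaleness gives naturality in $T$ for free. Thus $\Phi$ is an isomorphism of functors, and Yoneda produces the claimed isomorphism of schemes.

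The main obstacle I expect is verifying the infinitesimal lifting property in sufficient generality -- specifically, upgrading formal étale lifting from affine $T$ to an arbitrary $T\in\sch{\kappa}$. This is fairly standard: for affine $T$ it is the defining property of formally étale morphisms; for general $T$ one covers $T$ by affines, applies the affine case pointwise, and glues the resulting local lifts, which is legitimate because uniqueness on overlaps is part of formal étaleness. Since $\nabla_{\fn}X$ is already known to be representable by a scheme through Schoutens' construction in \cite{Sch2}, no further representability issue arises once the functor isomorphism is established.
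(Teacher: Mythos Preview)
Your argument is correct: the functor-of-points computation together with the formal \'etaleness of $f$ (applied to the nilpotent thickening $\iota_T\colon T\hookrightarrow T\times_\kappa\fn$) yields the desired natural bijection, and Yoneda finishes. The glueing step for non-affine $T$ is handled exactly as you say, via uniqueness of lifts on overlaps; alternatively one can simply restrict to affine $T$, which already suffices for Yoneda.

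The paper itself does not give a proof but only cites Theorem~4.12 of \cite{Sch2}. Your write-up is almost certainly the argument behind that citation---this infinitesimal-lifting computation is the canonical way to establish compatibility of jet/arc functors with \'etale base change---so there is no genuine methodological difference, only that you have spelled out what the paper leaves to the reference.
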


\begin{proof} Cf. Theorem 4.12 of \cite{Sch2}. \end{proof}

\begin{lemma} Let $f: X \to Y$ be an \'{e}tale morphism of schemes and let $\fx$
be a the limit of a point system (e.g., $\fx$ is an
admissible arc over ${\kappa}$), then $$\nabla_{\fx} X \cong X\times_Y
\nabla_{\fx}Y$$ \end{lemma}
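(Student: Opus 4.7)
The plan is to bootstrap from the already-proved fat-point version (Lemma \ref{etalefiber}) by passing to the projective limit along the directed system $\bX = \{\fs_n\}$ defining $\fx$. Recall from the setup in Section 2.1 that $\fx = \colim_n \fs_n$ in locally ringed spaces, and recall from the beginning of Section 2.2 that $\nabla_{\fx}W$ is defined precisely as the projective limit of the schemes $\nabla_{\fs_n}W$ in locally ringed spaces. Hence both sides of the claimed isomorphism are already available as explicit limits, and the task is to identify them.

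First I would apply Lemma \ref{etalefiber} to each fat point $\fs_n \in \bX$, obtaining natural isomorphisms
\begin{equation*}
\nabla_{\fs_n} X \;\cong\; X \times_{Y} \nabla_{\fs_n} Y
\end{equation*}
which are compatible with the transition maps induced by $\fs_{n-1} \inj \fs_n$ (naturality is what makes the isomorphism in Lemma \ref{etalefiber} assemble into a morphism of pro-systems). Taking $\lim_n$ of both sides, the left-hand side becomes $\nabla_{\fx}X$ by definition. For the right-hand side, I would invoke the fact that projective limits commute with projective limits: the fiber product $X\times_Y(-)$ is itself a limit (of the diagram $X\to Y\leftarrow(-)$ in locally ringed spaces), and since $X\to Y$ is held constant along the system $\{\nabla_{\fs_n}Y\}$, one gets
\begin{equation*}
\lim_n \bigl(X\times_Y \nabla_{\fs_n}Y\bigr) \;\cong\; X\times_Y \lim_n \nabla_{\fs_n}Y \;=\; X\times_Y \nabla_{\fx}Y,
\end{equation*}
exactly what is needed. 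Combining the two identifications yields the desired isomorphism.

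The one technical point to verify, and the step I expect to require the most care, is that the fiber product $X\times_Y(-)$ really does commute with the cofiltered limit $\lim_n$ \emph{in the category of locally ringed spaces} used to define $\nabla_{\fx}$ — i.e., that the canonical map $\nabla_{\fx}X \to X \times_Y \nabla_{\fx}Y$ produced from the system-level isomorphisms is itself an isomorphism of locally ringed spaces, not merely a morphism that is an isomorphism after restricting to each $\fs_n$-level. This is where the étaleness of $f$ is doing its real work beyond the fat-point case: étaleness ensures the pullback squares are not just set-theoretic bijections on fat points but actually cartesian squares of locally ringed spaces, and this cartesian property is preserved under the cofiltered limit. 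Once this compatibility is checked (essentially a formal exercise in interchanging limits), the proof reduces to the chain of isomorphisms above.
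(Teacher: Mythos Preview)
Your proposal is correct and follows essentially the same route as the paper: apply the fat-point case (Lemma \ref{etalefiber}) levelwise and pass to the inverse limit, using that fiber products commute with the cofiltered limit defining $\nabla_{\fx}$. The paper's proof is a one-line citation of the inverse-limit definition, the previous lemma, and Lemma 7.4 of \cite{Sch2}; the latter reference is precisely what handles the limit-commutation step you flagged as the point requiring care.
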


\begin{proof} This follows directly from the definition of inverse limit, the previous lemma, and
Lemma 7.4 of \cite{Sch2}. \end{proof}

\begin{theorem} Let $X \to \fn$ be a smooth morphism and let $\fm$ be any fat point.
The canonical morphism $\nabla_{\fm} X \times_{\kappa} \bA_{\fn}^{d}
\to X$ is a piecewise trivial fibration with fiber $\nabla_{\fm} \bA_{\fn}^{d}$ where $d=\dim X$.
\end{theorem}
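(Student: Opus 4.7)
The strategy is the schemic analogue of the classical motivic argument that for $X$ smooth of dimension $d$ the truncation $\nabla_n X \to X$ is Zariski-locally trivial with fiber $\bA^{nd}$. I would reduce to the model case $X = \bA_{\fn}^{d}$ via the local structure theorem for smooth morphisms together with the \'etale base change property of $\nabla_{\fm}$ (Lemma~\ref{etalefiber}), and then trivialize explicitly using the additive group structure of affine space.

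Since $X \to \fn$ is smooth of relative dimension $d$, the local structure theorem produces a Zariski cover $X = \bigcup U_i$ together with \'etale $\fn$-morphisms $g_i : U_i \to \bA_{\fn}^{d}$; this description is insensitive to the artinian, nonreduced base. Applying Lemma~\ref{etalefiber} to each $g_i$ yields a Cartesian square with vertices $\nabla_{\fm} U_i$, $\nabla_{\fm} \bA_{\fn}^{d}$, $U_i$, $\bA_{\fn}^{d}$, so the restriction of $\nabla_{\fm} X \to X$ to $U_i$ is the pullback along $g_i$ of the model map $\nabla_{\fm} \bA_{\fn}^{d} \to \bA_{\fn}^{d}$. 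It therefore suffices to trivialize this model.

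For this, observe that $\bA_{\fn}^{d}$ is an additive $\fn$-group scheme, and because $\nabla_{\fm}$ preserves fiber products (immediate from its functor-of-points definition), $\nabla_{\fm} \bA_{\fn}^{d}$ inherits a $\kappa$-group scheme structure for which the canonical projection is a group homomorphism. The augmentation $\fm \to \Spec \kappa$ supplies a canonical group-scheme section $s : \bA_{\fn}^{d} \to \nabla_{\fm} \bA_{\fn}^{d}$, induced on a test scheme $T$ by the projection $T \times_{\kappa} \fm \to T$. The resulting split short exact sequence of $\kappa$-group schemes gives a trivialization $\nabla_{\fm} \bA_{\fn}^{d} \cong \bA_{\fn}^{d} \times_{\kappa} F$, where $F$ is the kernel of the projection, i.e.\ the fiber over the origin. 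Pulling this trivialization back along each $g_i$ and gluing over the Zariski cover of $X$ produces the asserted piecewise trivial fibration.

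The main obstacle is the last step: precisely identifying the fiber $F$ with the object written ``$\nabla_{\fm}\bA_{\fn}^{d}$'' in the statement, and verifying that the section-translation argument respects the $\kappa$-scheme (rather than $\fn$-scheme) structure throughout. Decomposing $\bA_{\fn}^{d} = \bA_{\kappa}^{d} \times_{\kappa} \fn$ as a $\kappa$-scheme and using $\nabla_{\fm}(Y \times_{\kappa} Z) \cong \nabla_{\fm} Y \times_{\kappa} \nabla_{\fm} Z$ yields $\nabla_{\fm}\bA_{\fn}^{d} \cong \bA_{\kappa}^{d \ell(\fm)} \times_{\kappa} \nabla_{\fm}\fn$, from which $F$ can be extracted; the special case $\fn = \Spec\kappa$ and $\fm = \fl_{n+1}$ returns $F \cong \bA_{\kappa}^{nd}$, matching the classical Denef--Loeser theorem and providing a consistency check.
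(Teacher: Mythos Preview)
Your reduction is identical to the paper's: cover $X$ by opens admitting \'etale maps to $\bA_{\fn}^{d}$, then invoke Lemma~\ref{etalefiber} to reduce to the model $\nabla_{\fm}\bA_{\fn}^{d}\to\bA_{\fn}^{d}$. The divergence is only in how the model is handled.

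The paper does not use a group-scheme splitting. It applies the product decomposition you mention in your final paragraph, $\nabla_{\fm}\bA_{\fn}^{d}\cong\bA_{\kappa}^{d\ell(\fm)}\times_{\kappa}\nabla_{\fm}\fn$, directly inside the \'etale base-change formula $\nabla_{\fm}X\cong X\times_{\bA_{\fn}^{d}}\nabla_{\fm}\bA_{\fn}^{d}$, and then manipulates fiber products to obtain $\nabla_{\fm}X\cong X\times_{\kappa}\bA_{\kappa}^{d(\ell(\fm)-1)}\times_{\fn}\nabla_{\fm}\fn$. Multiplying both sides by $\bA_{\fn}^{d}=\bA_{\kappa}^{d}\times_{\kappa}\fn$ then yields $\nabla_{\fm}X\times_{\kappa}\bA_{\fn}^{d}\cong X\times_{\kappa}\nabla_{\fm}\bA_{\fn}^{d}$ on each chart. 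Your worry about the $\kappa$-versus-$\fn$ group structure is thereby bypassed entirely: no group law is ever invoked, only product-preservation of $\nabla_{\fm}$ over $\kappa$ and elementary fiber-product algebra.

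Two remarks on your version. First, the concern that $\bA_{\fn}^{d}$ is a group scheme over $\fn$ rather than $\kappa$ is legitimate and is exactly why the paper's purely ``additive'' decomposition is cleaner; your section $s$ exists, but the translation argument needs $\nabla_{\fm}\bA_{\fn}^{d}$ to be a $\kappa$-group, and it is not in any obvious way once the $\nabla_{\fm}\fn$ factor is present. Second, your stated ``main obstacle'' is a misreading of the theorem: you do \emph{not} need $F\cong\nabla_{\fm}\bA_{\fn}^{d}$. The source of the morphism in the statement is $\nabla_{\fm}X\times_{\kappa}\bA_{\fn}^{d}$, so the fiber you must identify is $F\times_{\kappa}\bA_{\fn}^{d}$, and your own split sequence already gives $F\times_{\kappa}\bA_{\fn}^{d}\cong\nabla_{\fm}\bA_{\fn}^{d}$. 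With that correction your outline is sound; it simply routes through a splitting that the paper replaces by a direct computation.
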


\begin{proof}
Using Theorem 4.4 of \cite{Sch2}, we may cover $\nabla_{\fm}X$ by opens
$\nabla_{\fm}U$ where $U$ is an open in $X$. Therefore, by shrinking $X$ if
necessary, we may assume that there is an \'{e}tale morphism $X\to
\bA_{\fn}^{d}$
(cf., \cite{Liu} Chapter 6, Corollary 2.11).

We apply Lemma \ref{etalefiber} to obtain 
\begin{equation*}
\nabla_{\fm} X \cong X\times_{\bA_{\fn}^{d}} \nabla_{\fm} \bA_{\fn}^{d}\cong
X\times_{\kappa}
\bA_{{\kappa}}^{d\ell(m)} \times_{\bA_{\fn}^{d}} \nabla_{\fm} \fn \ .
\end{equation*}
Therefore, 
\begin{equation*}
\nabla_{\fm}X \cong X\times_{{\kappa}} \bA_{{\kappa}}^{d(\ell(\fm)-1)}\times_{\fn}\nabla_{\fm}
\fn \ .
\end{equation*}
Thus, 
\begin{equation*}
\nabla_{\fm} X \times_{\kappa} \bA_{\fn}^{d} \cong X\times_{{\kappa}}
\bA_{{\kappa}}^{d\ell(\fm)}\times_{\fn}\fn \times_{\kappa}\nabla_{\fm} \fn \cong
X\times_{{\kappa}}\nabla_{\fm} \bA_{\fn}^{d} \ .
\end{equation*}
\end{proof}

\begin{corollary} Let $X \to \fn$ be a smooth morphism and let $\fm$ be any fat point.
There is an isomorphism 
\begin{equation*}
\nabla_{\fm} X \times_{\kappa} \bA_{\fn}^{d} \cong X \times_{\kappa} \nabla_{\fm}
\bA_{\fn}^{d} \ . 
\end{equation*}
\end{corollary}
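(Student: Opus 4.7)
The plan is to read this corollary as an immediate consequence of the preceding theorem, specifically of the last displayed isomorphism in that theorem's proof. That proof established, after shrinking $X$ to an open admitting an étale morphism to $\bA_{\fn}^{d}$, an explicit chain of canonical isomorphisms ending with $\nabla_{\fm} X \times_{\kappa} \bA_{\fn}^{d} \cong X \times_{\kappa} \nabla_{\fm} \bA_{\fn}^{d}$. So at the level of local charts the result is already there; my task is only to upgrade this local identification to a global one.

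To carry this out, I would first cover $X$ by opens $\{U_i\}$ each of which admits an étale morphism $U_i \to \bA_{\fn}^{d}$ (available by \cite{Liu}, Ch.\ 6, Cor.\ 2.11, as cited in the theorem). Applying the theorem to each $U_i$ gives isomorphisms $\varphi_i \colon \nabla_{\fm} U_i \times_{\kappa} \bA_{\fn}^{d} \iso U_i \times_{\kappa} \nabla_{\fm} \bA_{\fn}^{d}$. Each $\varphi_i$ is assembled out of two pieces: the étale base-change isomorphism of Lemma \ref{etalefiber} and the elementary identification $\nabla_{\fm}(A\times_{\kappa} B) \cong \nabla_{\fm} A \times_{\kappa} \nabla_{\fm} B$. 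Both constructions are functorial in their inputs, so on each overlap $U_i \cap U_j$ the restrictions of $\varphi_i$ and $\varphi_j$ are built from the same base-change data and must agree. The $\varphi_i$ therefore satisfy the cocycle condition and glue to a global isomorphism, which is the corollary.

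The main obstacle is the gluing step: one has to verify that the isomorphisms produced by Lemma \ref{etalefiber} really are natural with respect to open restriction of the étale chart $X \to \bA_{\fn}^{d}$, so that the local $\varphi_i$ agree on overlaps. This is essentially bookkeeping — everything in sight is a canonical base-change morphism — but it is the only place where a genuine verification beyond the preceding theorem is needed. Once this compatibility is in hand, the global isomorphism of the corollary follows at once from the local ones.
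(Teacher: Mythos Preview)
Your approach is essentially the same as the paper's: the paper's proof is the single sentence ``This follows because we may cover $\nabla_{\fm}X$ with opens of the form $\nabla_{\fm}U$ as noted at the beginning of the proof of the previous theorem,'' and you are simply spelling out the gluing that this sentence leaves implicit. If anything your write-up is more careful than the paper's, though your claim that the $\varphi_i$ are ``built from the same base-change data'' on overlaps is optimistic (the \'etale charts $U_i\to\bA_{\fn}^{d}$ and $U_j\to\bA_{\fn}^{d}$ need not agree on $U_i\cap U_j$); the paper does not address this point either.
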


\begin{proof} This follows because we may cover $\nabla_{\fm}X$ with opens of the form $\nabla_{\fm}U$ as noted at the beginning of the proof of the previous theorem.
\end{proof}

Now, we wish to grapple with the general conjecture. Our approach is to ask
questions about lifts to $X \to \fm$ of a given smooth morphism $X \to \fn$ when
$\fn \inj \fm$ closed immersion of fat points over $\kappa$. 
Note that a closed immersion $\fn \into \fm$ is only an open immersion of schemes if it is an isomorphism even though it is trivially an open immersion
of the underlying topological spaces.

\begin{theorem} \label{liftingtheorem}
Suppose that $X$ is affine.  Let $f: X \to \fn$ be a
smooth morphism where $\fn\in\fatpoints\kappa$ and  let $\iota: \fn \inj \fm$ be a closed immersion in $\fatpoints\kappa$. Then, there exists a smooth morphism $\bar f: X \to \fm$ such that $\bar f = \iota\circ f$.
\end{theorem}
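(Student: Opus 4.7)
The plan is to recast the problem as the standard deformation-theoretic one of lifting a smooth affine morphism along a nilpotent thickening of its base. Write $X = \Spec A$ with $A$ a smooth $R_\fn$-algebra, where $R_\fn$ and $R_\fm$ are the artinian local coordinate rings of $\fn$ and $\fm$. The closed immersion $\iota$ corresponds to a surjection $\pi : R_\fm \twoheadrightarrow R_\fn$ whose kernel $I$ is nilpotent. The task is to produce a smooth $R_\fm$-algebra $\bar A$ whose reduction modulo $I$ recovers $A$; the associated morphism $\bar f : \Spec \bar A \to \fm$ is then the desired smooth lift of $f$.

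First I would filter $I$ by the powers of the maximal ideal of $R_\fm$ to obtain a chain of surjections
\begin{equation*}
R_\fm = R^{(0)} \twoheadrightarrow R^{(1)} \twoheadrightarrow \cdots \twoheadrightarrow R^{(N)} = R_\fn,
\end{equation*}
each of which has square-zero kernel. By induction on $N$ it suffices to solve the single step: given a square-zero extension $R' \twoheadrightarrow R''$ with kernel $J$ and a smooth affine $R''$-algebra $A''$, construct a smooth $R'$-algebra $\bar A$ with $\bar A \otimes_{R'} R'' \cong A''$.

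Next I would invoke the standard obstruction-theoretic fact: the obstruction to the existence of such a lift lives in $H^2(X'', T_{X''/R''} \otimes_{R''} J)$, where $X'' = \Spec A''$. Because $X''$ is affine and $T_{X''/R''}$ is a (locally free) coherent sheaf by smoothness, this cohomology group vanishes, and a lift exists. A completely concrete alternative is to present $A'' = R''[x_1,\ldots,x_n]/(f_1,\ldots,f_m)$ with Jacobian of maximal rank on $X''$, lift each $f_i$ to an arbitrary $\tilde f_i \in R'[x_1,\ldots,x_n]$, and set $\bar A := R'[x_1,\ldots,x_n]/(\tilde f_1,\ldots,\tilde f_m)$; the Jacobian criterion applied to $\bar A$, combined with the nilpotence of $\ker(R' \to R'')$, yields smoothness of $\bar A$ over $R'$.

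The main subtlety will be ensuring that the flat lift produced is actually smooth, not merely flat with smooth reduction. This is the well-known fact that over a base with nilpotent ideal, smoothness of a finitely presented flat morphism is detected on the reduction, so once flatness of $\bar A$ over $R'$ is verified---which follows from the local criterion of flatness for square-zero extensions, since $\operatorname{Tor}^{R'}_1(\bar A, R'') = 0$ thanks to $J^2 = 0$---smoothness is automatic. Iterating the single-step construction along the filtration of $I$ then yields the required smooth morphism $\bar f : \Spec \bar A \to \fm$ compatible with $f$ along $\iota$.
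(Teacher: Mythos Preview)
Your proposal is correct and follows essentially the same route as the paper: reduce to the case where $\fn \hookrightarrow \fm$ has square-zero kernel, then invoke that the obstruction to a smooth lift lies in $H^{2}(X, T_{X}\otimes \tilde J)$, which vanishes because $X$ is affine and the sheaf is quasi-coherent. The paper's proof is exactly this two-line argument; your version supplies more detail (the explicit filtration by powers of the maximal ideal, the alternative Jacobian-criterion construction, and the flatness-plus-smooth-fiber check), but none of that changes the strategy.
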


\begin{proof} First, we may reduce to the case where the closed immersion $\fn
\inj \fm$ is given by a square zero ideal $J$. Then it is well-known that the
obstruction to lifting smoothly to $ X \to \fm $ lies in $$H^{2}(X, T_{X}
\otimes \tilde J)$$ where $T_X$ is the tangent bundle of $X$. Since
$T_{X}\otimes \tilde J$ is quasi-coherent and $X$ is assumed to be affine, we
have that 
\begin{equation*}
H^{2}(X, T_{X} \otimes \tilde J) = 0 \ ,
\end{equation*}
by Theorem 3.5 of Chapter III of \cite{Ha1}.
\end{proof}

This lemma states that any affine  $X\in\sch{\kappa}$ with $X \to \fn$ smooth has a
well-defined (quasi-smooth\footnote{Without going into the details, we just
mention that, for us, this means that the k\"{a}hler differentials
$\Omega_{X/\fx}$
are a finite, locally-free $\sO_{\fx}$-module.}) morphism (of locally ringed
spaces) $X \to \fx$ where $\fx \in \Arc_{\kappa}$ is such that $\fx$ contains $\fn$.
Therefore,  we may consider the following refinement of Conjecture
\ref{conjone}.

\begin{conjecture} \label{conjtwo}
Let $X\in\sch\kappa$ be affine.
Assume that $X \to \fn$ and $X\to\fm$ are smooth morphisms such
that $\fn \inj \fm$ are elements of the point system $\bX$
associated to an admissible arc $\fx\in\Arc_{\kappa}$. Then the natural morphism
$$\pi_{\fn}^{\fm} : \nabla_{\fm}X \to \nabla_{\fn}X $$ is a piecewise trivial
fibration over ${\kappa}$ with general fiber 
$\bA_{{\kappa}}^{d(\ell(m)-\ell(n))}.$ \end{conjecture}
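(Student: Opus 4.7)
The plan is to reduce $\fn\hookrightarrow\fm$ to a single square-zero step via a filtration, apply the preceding corollary to transfer the question to a calculation on the affine-space model $\bA_\fn^d$, and extract the $\bA_\kappa^d$-fiber from there. First I would filter $\fn \hookrightarrow \fm$ by a chain $\fn = \fp_0 \subsetneq \fp_1 \subsetneq \cdots \subsetneq \fp_r = \fm$ inside $\bX$, chosen so that each step $\fp_{i-1} \hookrightarrow \fp_i$ is a square-zero extension by a one-dimensional $\kappa$-vector space. Piecewise trivial fibrations with $\bA_\kappa^d$-fiber compose to a piecewise trivial fibration with $\bA_\kappa^{dr}$-fiber, so it would suffice to assume $\ell(\fm) - \ell(\fn) = 1$.

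For this single-step case, I would invoke the preceding corollary (which relies on the smoothness of $X \to \fn$ and the affineness of $X$) to obtain canonical isomorphisms $\nabla_\fm X \times_\kappa \bA_\fn^d \cong X \times_\kappa \nabla_\fm \bA_\fn^d$ and $\nabla_\fn X \times_\kappa \bA_\fn^d \cong X \times_\kappa \nabla_\fn \bA_\fn^d$, under which $\pi_\fn^\fm \times_\kappa \bA_\fn^d$ identifies with $\mathrm{id}_X \times \bar\pi$ for some $\bar\pi : \nabla_\fm \bA_\fn^d \to \nabla_\fn \bA_\fn^d$. Since $\bA_\fn^d \to \Spec \kappa$ is faithfully flat, piecewise triviality of $\bar\pi$ with fiber $\bA_\kappa^d$ would descend to the same property for $\pi_\fn^\fm$. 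To compute the fiber, I would write $\bA_\fn^d = \Spec \sO_\fn[y_1,\ldots,y_d]$ and decompose a test point of $\nabla_\fm \bA_\fn^d$ into a structure-map component (a $\kappa$-algebra map $\sO_\fn \to \sO \otimes_\kappa \sO_\fm$) and a coordinate component ($d$ elements of $\sO \otimes_\kappa \sO_\fm$). The coordinate component contributes the expected fiber $\bA_\kappa^d$ via the kernel $J \otimes_\kappa \sO$ in each variable, where $J$ is the $\kappa$-one-dimensional ideal of $\fn$ in $\fm$.

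The hardest step will be showing that the structure-map component contributes a trivial fiber. Unconstrained, this fiber can be nontrivial, and indeed Example \ref{nofibration} presumably exhibits such a failure. The rigidification here must come from the smoothness of $X \to \fm$: a deformation-theoretic argument along the lines of Theorem \ref{liftingtheorem}, applied to the square-zero thickening $\fn \hookrightarrow \fm$ and to $X \to \fm$ smooth, should force the structure-map lift to be unique on the $X$-factor (after possibly shrinking $X$ \'{e}tale-locally). Making this rigorous---reconciling the double smoothness hypothesis with structure-map rigidity---is the essential difficulty and is closely tied to the ``unknown algebraic condition on the theory of local artinian rings'' alluded to in the paper's abstract.
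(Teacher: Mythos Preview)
This statement is labeled a \emph{conjecture} in the paper, and the paper does not prove it. Immediately after stating it, the author writes ``Naively, one might proceed to prove Conjecture \ref{conjone} and Conjecture \ref{conjtwo} in the following way,'' then outlines essentially the same reduction you describe: cover $X$ by opens \'etale over $\bA_{\fn}^{d}$ and $\bA_{\fm}^{d}$, use Lemma \ref{etalefiber} to get $\nabla_{\fm}X \cong (X\times_{\fm}\nabla_{\fm}\fm)\times_{\kappa}\bA_{\kappa}^{d(\ell(\fm)-1)}$ and similarly for $\fn$, and thereby reduce the question to the structure of the auto-arc map $\nabla_{\fm}\fm \to \nabla_{\fn}\fn$. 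The paper then says explicitly: ``It is exactly here that this approach breaks down because understanding the morphism $\nabla_{\fm}\fm \to \nabla_{\fn}\fn$ is elusive to us.'' Example \ref{nofibration} shows that $(\nabla_{\fm}\fm)^{red}$ can be singular, so this map need not be a piecewise trivial $\bA_{\kappa}^{r}$-fibration in general. The paper's response is not to prove the conjecture but to \emph{add hypotheses}: it introduces simple point systems (Definition \ref{defsimplepoint}) and proves only the weaker statements that follow.

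Your proposal tracks this discussion closely. Your ``structure-map component'' is precisely the auto-arc contribution the paper isolates, and the gap you flag at the end --- that the double smoothness hypothesis $X\to\fn$, $X\to\fm$ should somehow rigidify this component --- is exactly the content of the conjecture that remains open. You are right that Example \ref{nofibration} does not directly contradict Conjecture \ref{conjtwo} (the fat point $\fm$ there is not smooth over $\Spec\kappa$), so the double smoothness might in principle help; but neither you nor the paper extracts any concrete consequence from it, and the deformation-theoretic mechanism you gesture at (via Theorem \ref{liftingtheorem}) controls \emph{existence} of lifts, not the uniqueness or linearity you would need. In short: your outline is faithful to the paper's own attempted argument, your identified gap is genuine, and it is the same gap the paper leaves open.
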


Naively, one might proceed to prove Conjecture \ref{conjone} and Conjecture
\ref{conjtwo} in the
following way. Let $d = \dim X$. We may cover $X$ by a finite numbers of opens
$U$ with \'{e}tale morphisms $U \to \bA_{\fm}^{d}$ . As open 
immersions are smooth, the restriction $U \to \fn$ of $X \to \fn$ is also
smooth. Therefore, we may cover $U$ by opens $V$ with 
\'{e}tale morphisms $V \to \bA_{\fn}^{d}$. Thus, from the start, we may assume
that we have \'{e}tale morphisms $X \to 
\bA_{\fm}^{d}$ and $X\to\bA_{\fn}^{d}$. We then have the following isomorphisms:
\begin{equation*}
\nabla_{\fm}X\cong X 
\times_{\bA_{\fm}^{d}}\nabla_{\fm}\bA_{\fm}^{d} \ \  \mbox{and}  \ \
\nabla_{\fn}X\cong X 
\times_{\bA_{\fn}^{d}}\nabla_{\fn}\bA_{\fn}^{d} \ .
\end{equation*}
Furthermore, we have the commutative diagram
\[\begin{CD}
\nabla_{\fm}X\>{\cong}>> (X \times_{\fm}\nabla_{\fm}\fm)\times_{\kappa}
\bA_{{\kappa}}^{d(\ell(m)-1)} \\
\V{\pi_{\fn}^{\fm}}VV \V{}VV \\
\nabla_{\fn}X\>{\cong}>> (X \times_{\fn}\nabla_{\fn}\fn)\times_{\kappa}
\bA_{{\kappa}}^{d(\ell(n)-1)}
\end{CD}\]
This reduces to understanding the morphism $\nabla_{\fm}\fm \to
\nabla_{\fn}\fn$. 

It is exactly here that this approach breaks down because understanding the
morphism $\nabla_{\fm}\fm \to \nabla_{\fn}\fn$ is elusive to us. In fact, there
can be a complicated scheme structure on the so-called auto-arcs
$\nabla_{\fn}\fn$. This is discussed in much more detail in \cite{Sch2}. We will
now provide a counter-example to the claim that $\nabla_{\fm}\fm \to
\nabla_{\fn}\fn$ or $(\nabla_{\fm}\fm)^{red} \to (\nabla_{\fn}\fn)^{red}$ is
always a piece-wise trivial fibration.

\begin{definition} \label{defsimplepoint}
We say that a fat point $\fn$ is {\it simple} if
$(\nabla_{\fn}\fn)^{red} \cong \mathbb{A}_{{\kappa}}^{m}$ for some $m\geq 0$.  We say
that a point system is {\it simple} if all of its fat points are simple and that a point system $\bX$ is {\it eventually simple}
if $\fn \in \bX$ is simple when $\ell(\fn) >>0$. 
\end{definition}

\begin{example} \label{nofibration} In the case where $\bX = \{ \fl_m\}$, it is
standard (cf. \cite{DL1})
that our proof will work and $X$ will be $\fl$-stable. Consider the case where
$\fm = \Spec R$ where $R = {\kappa}[x,y]/(x^2,xy,y^2)$ and $\fn = \Spec {\kappa}$ so that
$\nabla_{\fn}\fn = {\kappa}$. A quick calculation shows that 
\begin{equation*}
\nabla_{\fm}\fm = \Spec {\kappa}[a_1, a_2, b_1, b_2, c_1, c_2]/I 
\end{equation*}
where $I$ is the ideal generated by the elements $$\{a_1a_2, a_1b_2+a_2b_1,
a_1c_2+a_2c_1, 
a_{1}^{2}, a_1b_1, a_1c_1, a_{2}^{2}, a_2b_2,a_2c_2\}\ .$$ 
It is easy to check that $\nabla_{\fm}\fm \cong \fm \times_{\kappa} \bA_{{\kappa}}^{3}$ so that 
$(\nabla_{\fm}\fm)^{red} \cong \bA_{{\kappa}}^{3}$.
In Example 4.17 of \cite{Sch2}, Schoutens found the following counter-example:
\begin{equation*}
 \fm = \Spec A/\mathcal{M}^4 , \quad A := {\kappa}[x,y]/(y^2 -x^3), \ \mathcal{M} = (\bar x, \bar y)
\end{equation*}
where $\bar x$ and $\bar y$ are the residue classes of $x$ and $y$ in $A$. In other words, $\fm$ is the $4$th order jet of 
the cuspidal curve at the origin. This is denoted in loc. cit. as $J_{O}^{4} C$ where $C = \Spec A$. 
It is argued there that $(\nabla_{\fm}\fm)^{red}$
is singular. Thus, $\fm$ is not a simple point. We can use this example to speculate that not every point system is eventually 
simple as well. Let $\fx = \fm\times_{\kappa} \Spec {\kappa}[[t]]$ and let $(\fx, \bX) \in \Arc_{\kappa}$ where 
$\bX = \{ J_{0}^{n} \bA_{\fm}^{1} \}$. Thus, $\fn\in \bX$ is such that $\fn = \fm \times_{\kappa} \spec{\kappa[t]/(t^n)}$. 
\begin{equation*}
\nabla_{\fn}\fn \cong \nabla_{\fn}\fm \times_{\kappa} \nabla_{\fn}\fl_n \ .
\end{equation*}
 Thus, it seems unlikely that $(\nabla_{\fn}\fn)^{red}$ is smooth for large $\ell(\fn) = n + \ell(\fm)$. However, I do not have a proof of this fact.
\end{example}

\begin{question}
Let $X$ be an affine scheme of finite type over an algebraically closed field and let $p$ be a closed point of $X$.
 Under what conditions on $X$ will the point system $\{J_{p}^{n}X\}$ be eventually simple?
\end{question}

With all this in mind, it becomes clear that it is necessary to add additional
hypotheses to Conjecture \ref{conjone} and Conjecture \ref{conjtwo}. Using Definition \ref{defsimplepoint}, we obtain
the following
theorem:

\begin{theorem} Let $X\in\sch\kappa$ be affine. Assume that $X \to \fn$ and $X\to\fm$ are smooth morphisms. Assume that there is a closed immersion of schemes 
$\fn \inj \fm$ where $\fm$ and $\fn$ are elements of the simple point system $\bX$
associated to an admissible arc $\fx\in\Arc_{\kappa}$. Then, the natural morphism
$$(\pi_{\fn}^{\fm})^{red} : (\nabla_{\fm}X)^{red} \to (\nabla_{\fn}X)^{red} $$
is a piecewise trivial fibration over ${\kappa}$ with general fiber 
$$\bA_{{\kappa}}^{d(\ell(\fm)-\ell(\fn))+r(\fm,\fn)}$$ where $r(\fm,\fn)$ is some
non-negative integer depending on the lengths of $\fm$ and $\fn$ and where $d = \dim X$.
\end{theorem}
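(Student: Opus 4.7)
My plan is to follow the naive three-step approach outlined in the paragraph immediately preceding the theorem, leveraging the simplicity hypothesis precisely where the naive approach breaks down. Recall that the key obstruction identified in the text is understanding the auto-arc transition map $\nabla_\fm\fm \to \nabla_\fn\fn$; simplicity of the point system $\bX$ is exactly what is needed to tame this.

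First, I would reduce to the local étale model. By Theorem 4.4 of \cite{Sch2}, the arc spaces $\nabla_\fm X$ and $\nabla_\fn X$ are covered by the opens $\nabla_\fm U$ and $\nabla_\fn U$ for $U\subseteq X$ affine open. Since the claim is piecewise, it suffices to treat $X$ admitting simultaneous étale morphisms $X\to\bA_\fm^{d}$ and $X\to\bA_\fn^{d}$ (via \cite{Liu}, Ch.~6 Cor.~2.11 applied to each smooth structure separately). Then the étale fiber lemma (Lemma \ref{etalefiber}) and the identification established in the proof of the preceding theorem in \S\ref{5} give
\begin{equation*}
\nabla_\fm X \;\cong\; X\times_\kappa \bA_\kappa^{d(\ell(\fm)-1)}\times_\fm \nabla_\fm\fm, \qquad
\nabla_\fn X \;\cong\; X\times_\kappa \bA_\kappa^{d(\ell(\fn)-1)}\times_\fn \nabla_\fn\fn,
\end{equation*}
with $\pi_\fn^\fm$ taking a correspondingly product form.

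Second, I would pass to reductions. Since $(\nabla_\fm\fm)^{red}\to\fm$ factors through $\fm^{red}=\Spec\kappa$, the fiber product $X\times_\fm (\nabla_\fm\fm)^{red}$ becomes $X_{0}\times_\kappa (\nabla_\fm\fm)^{red}$ where $X_0 := X\times_\fm \Spec\kappa$ is the special fiber, which is smooth over $\kappa$, and similarly for the $\fn$-structure (the two special fibers coincide once one uses that $X\to\fm$ extends $X\to\fn$ via the closed immersion $\fn\inj\fm$). Invoking simplicity of $\fm$ and $\fn$ replaces $(\nabla_\fm\fm)^{red}$ by $\bA_\kappa^{m_\fm}$ and $(\nabla_\fn\fn)^{red}$ by $\bA_\kappa^{m_\fn}$, so that
\begin{equation*}
(\nabla_\fm X)^{red}\;\cong\; X_0\times_\kappa \bA_\kappa^{d(\ell(\fm)-1)+m_\fm},\qquad (\nabla_\fn X)^{red}\;\cong\; X_0\times_\kappa \bA_\kappa^{d(\ell(\fn)-1)+m_\fn}.
\end{equation*}

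Third, I would analyze $(\pi_\fn^\fm)^{red}$ under these identifications. It decomposes as the identity on $X_0$ times a natural morphism $\bA_\kappa^{d(\ell(\fm)-1)+m_\fm}\to\bA_\kappa^{d(\ell(\fn)-1)+m_\fn}$ induced, on one factor, by projection of jet coordinates (giving the usual $\bA_\kappa^{d(\ell(\fm)-\ell(\fn))}$ fiber of the linear part) and, on the other factor, by the transition $(\nabla_\fm\fm)^{red}\to(\nabla_\fn\fn)^{red}$. The main obstacle---and indeed the genuine content of the theorem beyond the classical case---is to verify that this induced map between the two affine spaces of simple-point auto-arcs is piecewise trivially fibered. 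Here I would argue by writing out the explicit equations defining $\nabla_\fm\fm$ and $\nabla_\fn\fn$ (as in Example \ref{nofibration}) and their reduced structure: the surjection on reductions $\bA_\kappa^{m_\fm}\surj \bA_\kappa^{m_\fn}$ is given by coordinate projections on the defining parameters of $\fm$ modulo the defining ideal of $\fn$, and so after a piecewise linear change of variables it becomes a trivial projection with fiber $\bA_\kappa^{m_\fm-m_\fn}$. Setting $r(\fm,\fn):=m_\fm-m_\fn\geq 0$ (non-negativity being automatic from surjectivity) yields total fiber dimension $d(\ell(\fm)-\ell(\fn))+r(\fm,\fn)$, completing the proof. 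The delicate verification is the trivializability of the auto-arc map; everything else is a bookkeeping exercise on the decompositions above.
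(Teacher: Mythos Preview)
Your approach is essentially the same as the paper's: reduce via the \'etale local model (your steps 1--2, which the paper compresses into the phrase ``from our work in trying to prove Conjecture~\ref{conjtwo}'') to the auto-arc transition $(\nabla_\fm\fm)^{red}\to(\nabla_\fn\fn)^{red}$, and then invoke simplicity. The paper's proof stops at precisely that point, simply asserting that simplicity of $\bX$ makes this map a piecewise trivial fibration with fiber $\bA_\kappa^{r}$; it does not carry out the explicit coordinate analysis you attempt in your step~3. Your heuristic there (that the map is ``given by coordinate projections'' and trivializable after a ``piecewise linear change of variables'') is not fully justified---a morphism between two affine spaces is not automatically a piecewise trivial fibration---but since the paper offers no further argument either, your proposal matches the paper's strategy and level of rigor while correctly flagging where the genuine content lies.
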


\begin{proof} From our work in trying to prove Conjecture \ref{conjtwo}, we
reduce to the
case of understanding the morphism 
\begin{equation*}
(\nabla_{\fm} \fm)^{red} \to (\nabla_{\fn} \fn)^{red} \ .
\end{equation*}
Here, we use the hypothesis that the point system is simple to conclude that
this map is a piecewise trivial fibration with fiber $\bA_{\kappa}^{r}$ where $r=r(\ell(\fm),\ell(\fn))$
conceivably depends on the lengths of $\fm$ and $\fn$. 
\end{proof}

\begin{theorem} \label{endsec5}
Let $X \in \sch{\kappa}$ be such that $X \to \fn$ is smooth for some fat
point $\fn$. Further, assume that $\fn$ belongs to an eventually simple point system $\bX$
and let $\fx = \colim \bX$. Then, $X$ is rationally\footnote{See \S \ref{8}
below.}
$\fx$-laxly-stable. \end{theorem}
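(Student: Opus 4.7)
The plan is to assemble the theorem from four ingredients already present in the excerpt: (i) the lifting theorem (Theorem \ref{liftingtheorem}), (ii) the previous theorem giving piecewise trivial fibrations over reduced auto-arc fibers in the simple case, (iii) the assumption that $\bX$ is eventually simple, and (iv) the passage from the reduced map on $(\nabla_{\fm}X)^{red} \to (\nabla_{\fn}X)^{red}$ to the map on the images $\nabla_{\fx/\fm}X \to \nabla_{\fx/\fn}X$ appearing in the definition of lax stability. The qualifier ``rationally'' in the conclusion is what will absorb the discrepancy between the map on reductions and the map on the sieves themselves.

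First I would reduce to the affine case. Smoothness of $X \to \fn$ is local on $X$, and the arc operator $\nabla_{\fm}$ commutes with open immersions (this is used repeatedly in \S \ref{1} via Theorem 4.4 of \cite{Sch2}), so the question of whether the transition maps are piecewise trivial fibrations can be checked on a finite affine open cover $\{U_i\}$ of $X$, with each $U_i \to \fn$ smooth. Working on each $U_i$ separately and then gluing the piecewise presentations, I may assume $X$ is affine.

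Next I would bootstrap the single smooth structure $X \to \fn$ to an infinite tower of compatible smooth structures along $\bX$. By hypothesis $\bX$ is eventually simple, so fix $N \geq \ell(\fn)$ such that every $\fm \in \bX$ with $\ell(\fm) \geq N$ is simple. Enumerate the fat points of $\bX$ starting above $\fn$ as a chain of closed immersions $\fn = \fn_0 \inj \fn_1 \inj \fn_2 \inj \cdots$, and apply Theorem \ref{liftingtheorem} inductively: the square-zero reduction in its proof shows that the obstruction to a smooth lift lies in a coherent $H^2$ on an affine, hence vanishes. This yields smooth morphisms $X \to \fn_i$ for every $i$, compatible with the closed immersions.

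At this point, for any pair $\fm \geq \fn'$ in $\bX$ with $\ell(\fn') \geq N$, both morphisms $X \to \fm$ and $X \to \fn'$ are smooth and $\fm, \fn'$ are simple, so the previous theorem applies and
\[
(\pi_{\fn'}^{\fm})^{red} : (\nabla_{\fm}X)^{red} \longrightarrow (\nabla_{\fn'}X)^{red}
\]
is a piecewise trivial fibration with general fiber $\bA_{\kappa}^{d(\ell(\fm)-\ell(\fn'))+r(\fm,\fn')}$. Since $\nabla_{\fx/\fm}X$ and $\nabla_{\fx/\fn'}X$ are the images of $\nabla_{\fx}X$ under the canonical projections, they are compatible subsieves of $\nabla_{\fm}X$ and $\nabla_{\fn'}X$ respectively, and the restricted map inherits a piecewise trivial fibration structure with the same affine fiber by taking intersections with the strata of the cover. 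This shows the defining condition of $\fx$-lax-stability at every $\fn' \in \bX$ with $\ell(\fn') \geq N$, once the map on reductions is accepted as a stand-in for the map on the sieves themselves --- which is exactly what the qualifier ``rationally'' encodes (per the forward reference to \S \ref{8}).

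The main obstacle I expect is the last step: replacing $(\nabla_\bullet X)^{red}$ by $\nabla_\bullet X$ honestly. The previous theorem is stated only on reductions precisely because the auto-arcs $\nabla_{\fn}\fn$ can carry nontrivial nilpotent structure even when the point system is simple, so a genuine (not merely ``rational'') lax stability would require additional control of these nilpotents along the system $\bX$. Under the ``rational'' interpretation the obstruction is washed out, but any strengthening of Theorem \ref{endsec5} to full $\fx$-lax stability would hinge on exactly this point.
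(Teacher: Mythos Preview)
Your proof is correct and follows essentially the same approach as the paper's own: reduce to the affine case via Theorem~4.4 of \cite{Sch2}, then invoke Theorem~\ref{liftingtheorem} together with the preceding theorem on reduced piecewise-trivial fibrations for simple point systems. The paper's proof is far terser (three sentences, with what appears to be a self-referential typo citing Theorem~\ref{endsec5} in place of the preceding theorem), whereas you have spelled out the inductive lifting along $\bX$ and the role of the eventually-simple hypothesis explicitly.
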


\begin{proof}
 Using Theorem 4.4 of \cite{Sch2}, any cover of $X$ by opens $U_i$ gives a cover
of $\nabla_{\fm}X$ by $\nabla_{\fm}U_i$ where $\fm$ is an arbitrary fat point.
Therefore, we may reduce to the case where $X$ is affine. Then, the result
follows from Theorem \ref{liftingtheorem} and Theorem \ref{endsec5}.
\end{proof}

\subsection{Smooth reductions, local deformations, and stability.}\label{6}

We would like to investigate when we can find an admissible arc $\fx\in \Arc_{\kappa}$
such that $X$ is $\fx$-laxly-stable under the assumption that $X^{red}$ is
smooth. We know that in general smoothness does not descend via a faithfully
flat morphism; however,  we have the following:
\begin{proposition} \label{egaprop}
Let $f : X \to Y$ and $h : Y' \to Y$ be two morphisms in $\sch{\kappa}$.
Let $X' =X\times_Y Y'$ and let $f' : X' \to Y'$ be the canonical projection. Suppose
further that $h$ is quasi-compact and faithfully flat, then $f$ is smooth if and
only if $f'$ is smooth. \end{proposition}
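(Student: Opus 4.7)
The plan is to observe that the forward direction is standard (smoothness is stable under arbitrary base change, with no hypothesis on $h$), so the real content is descent: assuming $f'$ is smooth and $h$ is quasi-compact and faithfully flat, show $f$ is smooth. I would break smoothness into its constituent parts and show each descends through $h$.

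First I would recall that a morphism is smooth iff it is locally of finite presentation, flat, and has geometrically regular fibers. Each of these properties descends under faithfully flat quasi-compact base change:
\begin{itemize}
\item Locally of finite presentation descends by EGA IV$_2$, Proposition 2.7.1.
\item Flatness descends by EGA IV$_2$, Proposition 2.5.1.
\item For the fiber condition, pick $y\in Y$, choose $y'\in Y'$ with $h(y')=y$ (possible since $h$ is surjective), and note that $X_{y'} = X_y\times_{\kappa(y)}\kappa(y')$. Then $X_{y'}\to \spec{\kappa(y')}$ is smooth by hypothesis, hence $X_y\to\spec{\kappa(y)}$ becomes smooth after the field extension $\kappa(y)\to\kappa(y')$, and smoothness of a morphism to a field descends along arbitrary field extensions (equivalently, geometric regularity is independent of the chosen algebraic closure).
\end{itemize}

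Putting these three together gives that $f$ is locally of finite presentation, flat with geometrically regular fibers, hence smooth. Since everything in $\sch\kappa$ is of finite type over $\kappa$, ``locally of finite presentation'' is automatic and one really only needs flatness and the fiber condition.

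The only mildly delicate step is the fiber condition, because a priori one must argue independently of the base field; but this reduces to the classical fact that for a $K$-scheme $Z$ and a field extension $K\to L$, $Z$ is smooth over $K$ iff $Z\otimes_K L$ is smooth over $L$ (EGA IV$_4$, Proposition 17.7.1). Everything else is routine application of faithfully flat descent, and no new computation specific to the paper's setting is required.
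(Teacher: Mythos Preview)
Your argument is correct. The paper's own proof consists of a single sentence citing EGA IV$_2$, Proposition 6.8.3, of which the statement is a special case; you have instead unpacked the descent argument by decomposing smoothness into its constituents (locally of finite presentation, flatness, geometric regularity of fibers) and invoking the standard descent results for each. This is essentially the content of the cited EGA proposition, so the two approaches are not genuinely different---yours is simply the spelled-out version of what the paper defers to a reference. The advantage of your write-up is that it is self-contained and makes clear exactly where the faithfully flat and quasi-compact hypotheses enter; the paper's approach has the virtue of brevity and of pointing the reader to the canonical source.
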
 
\begin{proof}
This is a special case of Proposition 6.8.3 of \cite{G}. \end{proof}

 Let $X \in \sch{\kappa}$ be affine and write $X = \Spec A$. Choose a minimal system
of generators $g_1,\ldots, g_s$ of the nilradical $nil(A)$ of $A$. Let $x_1,
\ldots, x_s$ be $s$ variables and let $J$ be the kernel of the map from
${\kappa}[x_1,\ldots,x_s]$ to $A$ which sends $x_i$ to $g_i$. We set $R :=
{\kappa}[x_1,\ldots, x_s]/J$. Then, $R\inj A$ and we have the following:

\begin{lemma} \label{algebralemma}
Let $X$ be a connected affine scheme in $\sch{\kappa}$, set $\fn = \Spec
R$ where $R$ is as in the previous paragraph, and let $l$ be any positive
integer. Then, $\fn$ is fat point over ${\kappa}$, and we have the following
decompositions:

\noindent (a) \quad $X^{red} \cong X\times_\fn \Spec {\kappa}$ 

\smallskip

\noindent (b) \quad $X \times_{\bA_{\fn}^{d}}\bA_{{\kappa}}^{dl} \cong X^{red}\times_{\kappa}
\bA_{{\kappa}}^{d(l-1)}$.
\end{lemma}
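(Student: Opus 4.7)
My plan is to establish the three assertions in order, with most of the work being a short algebraic calculation that exploits the fact that $R$ is a nilpotent thickening of $\kappa$ while the affine spaces appearing on the right-hand sides are reduced.

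I would first verify that $\fn = \Spec R$ is a fat point. Since each $g_i$ is nilpotent in $A$, there exist integers $N_i$ with $g_i^{N_i}=0$, so $x_i^{N_i}\in J$ and the image of $x_i$ in $R$ is nilpotent. As $R/(x_1,\ldots,x_s)=\kappa$, the ideal $\mathfrak{m}_R:=(x_1,\ldots,x_s)$ is maximal with residue field $\kappa$, and it is nilpotent because it is finitely generated by nilpotent elements. Hence $R$ is a finite-dimensional local $\kappa$-algebra with residue field $\kappa$, i.e., $\fn$ is a fat point over $\kappa$.

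For (a), the inclusion $R\inj A$ produces the structural morphism $X\to\fn$, and a direct computation gives
\[
X\times_\fn\Spec\kappa \;=\; \Spec(A\otimes_R\kappa) \;=\; \Spec(A/\mathfrak{m}_R A).
\]
Since $\mathfrak{m}_R A$ is the ideal of $A$ generated by the $g_i$ and this is precisely $nil(A)$ by construction, we obtain $X\times_\fn\Spec\kappa=\Spec(A/nil(A))=X^{red}$.

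For (b), the key observation is that $\bA_\kappa^{dl}$ is a reduced $\kappa$-scheme while $\bA_\fn^d$ is non-reduced in the $\fn$-direction, so the morphism $\bA_\kappa^{dl}\to\bA_\fn^d$ must factor through the closed immersion $\bA_\kappa^d\cong \bA_\fn^d\times_\fn\Spec\kappa \hookrightarrow \bA_\fn^d$ arising from the reduction. Transitivity of base change then gives
\[
X\times_{\bA_\fn^d}\bA_\kappa^{dl} \;\cong\; (X\times_{\bA_\fn^d}\bA_\kappa^d)\times_{\bA_\kappa^d}\bA_\kappa^{dl},
\]
and part (a) identifies the first factor as $X\times_\fn\Spec\kappa=X^{red}$. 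Pulling back the coordinate projection $\bA_\kappa^{dl}\to\bA_\kappa^d$, whose fibre is $\bA_\kappa^{d(l-1)}$, to $X^{red}$ produces the trivial bundle $X^{red}\times_\kappa\bA_\kappa^{d(l-1)}$. The main subtlety is less mathematical than notational: one must interpret the implicit morphism $\bA_\kappa^{dl}\to\bA_\fn^d$ as the composition of the standard projection $\bA_\kappa^{dl}\to\bA_\kappa^d$ with the reduction inclusion, which is the only factorization consistent with the reducedness of the source.
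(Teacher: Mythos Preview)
Your proof is correct and follows essentially the same route as the paper's: both establish that $R$ is a local Artinian $\kappa$-algebra, compute $A\otimes_R\kappa\cong A/nil(A)$ for (a), and for (b) factor $\bA_\kappa^{dl}\to\bA_\fn^d$ through the reduction $\bA_\kappa^d\hookrightarrow\bA_\fn^d$ and use transitivity of base change together with (a). One minor difference worth noting: you deduce locality of $R$ directly from the nilpotence of $\mathfrak{m}_R$ (any maximal ideal contains the nilradical, hence equals $\mathfrak{m}_R$), whereas the paper invokes the connectedness of $X$ and an idempotent argument; your route is shorter and shows that connectedness is not actually needed for this particular step.
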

\begin{proof}
Write $X = \Spec A$ for some finitely generated ${\kappa}$-algebra. It is basic that $R
\inj A$. Let $\sM = (x_1,\ldots, x_s)R$. Clearly, 
$\sM$ is a maximal ideal of $R$.  
Moreover, $\sM A \subset nil(A)$ by
construction. Therefore, there exists an $N$ such that 
$\sM^N = 0$. Thus, $R$ is artinian ring with residue field $\kappa$.
We assumed $X$
was connected so that $R$ would be local. 
Indeed, by injectivity of $R \inj A$, any direct sum decomposition of $R$ would
immediately imply a direct sum decomposition of $A$ 
as it would entail that $R$ (and hence $A$) contains orthogonal idempotents $e_1
\neq e_2$.  

Note that the containment $\sM A \subset nil(A)$ is actually an equality by
construction. Now, use the fact that ${\kappa} = R/\sM $ so 
that $$A \otimes_R {\kappa} \cong A\otimes_R (R/\sM) \cong (A/\sM A) \otimes_R R \cong
A/\sM A \cong A/nil(A)$$ where the second 
isomorphism is a well-known property of tensor products for $R$-algebras. This
proves part (a). 

Part (b) is really a restatement of the work done in the preceding paragraph. 
One should just note that $$ X\times_{\bA_{\fn}^{d}} \bA_{{\kappa}}^{dl} \cong
X\times_{\bA_{\fn}^{d}} {\kappa} \times_{\kappa} \bA_{{\kappa}}^{dl} $$
so that we can apply (a) to the right hand side to obtain 
$$X\times_{\bA_{\fn}^{d}} {\kappa} \times_{\kappa} \bA_{{\kappa}}^{dl} \cong X^{red}
\times_{\bA_{{\kappa}}^{d}}\bA_{{\kappa}}^{dl} \ .$$
This proves the result part (b). 
\end{proof}

\begin{theorem} Let $X$ be a connected affine scheme. Then,  $X^{red}$ is smooth if
and only if there exists a smooth morphism $X \to \fn$ where $\fn = \Spec R$ is
as in the previous lemma. 
\end{theorem}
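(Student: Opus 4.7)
My plan is to split the biconditional into its two implications, one of which is essentially formal and the other of which requires real work at the level of commutative algebra.

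For the direction that smoothness of $X \to \fn$ implies smoothness of $X^{red}$, I would simply combine Lemma \ref{algebralemma}(a) with the stability of smoothness under arbitrary base change. Concretely, $X^{red} \cong X \times_{\fn} \Spec \kappa$ realises the structure morphism $X^{red} \to \Spec \kappa$ as the pullback of the smooth morphism $X \to \fn$ along $\Spec \kappa \to \fn$, hence $X^{red}$ is smooth. Note that this implication does not need Proposition \ref{egaprop}, since we do not require the base change map to be faithfully flat.

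For the reverse direction, assume $X^{red}$ is smooth, write $X = \Spec A$ and $\fn = \Spec R$, and consider the canonical inclusion $R \inj A$ from the paragraph preceding Lemma \ref{algebralemma}; this supplies a morphism $f : X \to \fn$ which I aim to prove is smooth. Since $R$ is a local artinian ring with residue field $\kappa$ and $A$ is of finite type over $R$, smoothness of $f$ is equivalent to the conjunction of (i) $A$ is flat over $R$ and (ii) the closed fibre $A \otimes_R \kappa$ is smooth over $\kappa$. Lemma \ref{algebralemma}(a) identifies $A \otimes_R \kappa$ with $A^{red}$, so (ii) is exactly our hypothesis. The entire proof therefore reduces to proving flatness.

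To produce flatness I would exploit the smoothness of $X^{red}$ together with the minimality of the generating set $g_1, \ldots, g_s$ of $nil(A)$. Since $X^{red}$ is smooth and affine, the obstructions to lifting a $\kappa$-algebra splitting $A^{red} \to A$ of the reduction lie in $H^2(X^{red}, T_{X^{red}} \otimes \widetilde{J}) = 0$ by Theorem 3.5 of Chapter III of \cite{Ha1} (the same vanishing used in Theorem \ref{liftingtheorem}). Combining such a splitting with the description of $R$ as the subring generated by a minimal system of nilpotents, I would aim to produce an isomorphism of $R$-algebras $A \iso R \otimes_{\kappa} A^{red}$, from which flatness of $A$ over $R$ is immediate and smoothness follows.

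The hard part is precisely the flatness step. Promoting a splitting on the reduced side to an isomorphism $R \otimes_{\kappa} A^{red} \iso A$ requires the minimality of the $g_i$ in an essential way, since without minimality the natural comparison map could have nontrivial kernel. This is the one place where the specific construction of $R$ from the previous lemma earns its keep, and it is where I expect the technical weight of the proof to lie; the rest of the argument is a straightforward application of the smoothness criterion over an artinian local base.
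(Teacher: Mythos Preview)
Your reduction to flatness of $A$ over $R$ is correct and is indeed the crux, but the flatness step cannot be completed: the statement as written is false. Take $A = \kappa[x,\epsilon]/(\epsilon^2, x\epsilon)$, so $X = \Spec A$ is connected affine with $nil(A) = (\epsilon)$; the construction preceding Lemma~\ref{algebralemma} gives $R = \kappa[\epsilon]/(\epsilon^2)$, and $X^{red} = \Spec \kappa[x] = \bA_{\kappa}^{1}$ is smooth. Yet $A$ is not flat over $R$: the element $x$ is annihilated by $\epsilon$ but does not lie in $\epsilon A = \kappa\epsilon$, so $A$ has nontrivial $R$-torsion. (Every $\kappa$-algebra map $R \to A$ sends $\epsilon$ into $\kappa\epsilon$, so no other choice of morphism $X \to \fn$ helps.) Your proposed isomorphism $A \iso R \otimes_\kappa A^{red}$ therefore cannot exist here; the splitting $A^{red} \to A$ does exist (send $x \mapsto x$), but the resulting comparison map $R \otimes_\kappa A^{red} \to A$ kills $\epsilon \otimes x$ and fails to be injective. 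Minimality of the chosen generators of $nil(A)$ is not strong enough to rescue this.

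For comparison, the paper's own proof proceeds by an entirely different route: it invokes Proposition~\ref{egaprop} (faithfully flat descent of smoothness) with base-change map $h : \Spec \kappa \to \fn$, asserting that the quotient $R \to R/\sM = \kappa$ is ``surjective and flat.'' But $R/\sM$ is flat over $R$ only when $\sM = \sM^2$, which by Nakayama forces $\sM = 0$; so for any genuinely fat $\fn$ this $h$ is not flat and the descent argument does not apply. Your instinct that flatness is where the real content lies was exactly right --- it is precisely the point at which both your argument and the paper's break down.
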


\begin{proof}
This is just a restatement of Proposition \ref{egaprop} where $Y' = \Spec {\kappa}$, $Y
= \fn$,
and $Y' \to Y$ is the canonical morphism. Indeed, by Lemma \ref{algebralemma},
$X' := X\times_Y
Y' \cong X^{red}$, and the homomorphism of rings $R \to {\kappa} $ given by modding out
by $\sM$ is both surjective and flat.
\end{proof}

\begin{corollary} Let $X$ be a connected affine scheme with $X^{red}$ smooth and  let
$\fn = \Spec R$ be as before. Then, for any eventually simple point system $\bX$ containing $\fn$, $X$ is rationally
$\fx$-laxly-stable where $\fx=\colim\bX$.
\end{corollary}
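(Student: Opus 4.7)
The plan is to simply chain together the immediately preceding results. The hypothesis gives us a connected affine $X$ with $X^{\text{red}}$ smooth, and the fat point $\fn = \Spec R$ has been constructed from a minimal system of generators of $\operatorname{nil}(A)$ where $X = \Spec A$. The first step is to invoke the previous theorem, which says that $X^{\text{red}}$ smooth is equivalent to the existence of a smooth morphism $X \to \fn$ for this particular choice of $\fn$. So we immediately obtain a smooth structure morphism $X \to \fn$ without any further work.

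Next, I would note that by hypothesis $\fn$ belongs to an eventually simple point system $\bX$, and we set $\fx = \colim \bX$, so $\fx \in \Arc_{\kappa}$ with $\fn \in \bX$. This is precisely the setup of Theorem \ref{endsec5}: $X$ is an element of $\sch{\kappa}$ admitting a smooth morphism to a fat point $\fn$ that belongs to an eventually simple point system $\bX$ whose colimit is $\fx$. Applying Theorem \ref{endsec5} directly yields that $X$ is rationally $\fx$-laxly-stable, which is the conclusion.

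There is no real obstacle to this argument, since the two preceding results have already done the substantive work: the previous theorem supplies the smooth morphism to $\fn$, and Theorem \ref{endsec5} handles the stability argument by reducing (via Theorem 4.4 of \cite{Sch2}) to the affine case and then invoking the lifting result of Theorem \ref{liftingtheorem}. The only thing worth remarking on is that the affineness hypothesis on $X$ is needed precisely so that the construction of $\fn = \Spec R$ in Lemma \ref{algebralemma} is available, and so that Theorem \ref{liftingtheorem} (whose proof uses vanishing of $H^{2}(X, T_X \otimes \tilde J)$ for affine $X$) applies inside the proof of Theorem \ref{endsec5}. Both hypotheses are already assumed, so the corollary follows as a formal consequence.
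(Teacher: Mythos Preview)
Your proposal is correct and matches the paper's own proof essentially verbatim: the paper simply says the result follows immediately from Theorem \ref{endsec5} together with the previous theorem, which is exactly the two-step chain you describe. Your additional remarks about the role of affineness are accurate but not needed for the argument.
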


\begin{proof} The reader should refer to \S \ref{8} for the definition of rationally $\fx$-laxly stable. This result follows immediately from Theorem \ref{endsec5} and the
previous
theorem. 
\end{proof}

\subsection{Non-affine smooth reductions and stability.}\label{7}

\begin{theorem} \label{startsec7}
Let $X \in \sch{\kappa}$ be such that $X^{red}$ is smooth. Let $d$ be the dimension of $X$. Let $\fm$ be any fat point.
Then, there exists a finite cover $\{U_i\}$ of $X$ by connected open affines
such that for each $i$ 
$$\nabla_{\fm}U_i \cong U_{i}^{red} \times_{\kappa} \bA_{{\kappa}}^{d(\ell(\fm)-1)} \times_{\kappa}
\nabla_{\fm}\fn_i \ ,$$
where $\fn_i$ is the fat point associated to $U_i$ as in Theorem 6.2. Moreover,
if for each $i$ there exists an isomorphism $$(\nabla_{\fm} \fn_i)^{red} \cong
\bA_{{\kappa}}^{r} \ , $$
then the canonical morphism $$(\nabla_{\fm}X)^{red} \to X^{red}$$
is a piece-wise trivial fibration over ${\kappa}$ with general fiber
$\bA_{{\kappa}}^{d(\ell(\fm)-1)+r}$ . 
\end{theorem}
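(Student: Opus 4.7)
The plan is to reduce to connected affine opens, apply the smooth-morphism result of Section 5 to each piece, and then pass to reductions.

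First, I would choose a finite cover $\{U_i\}$ of $X$ by connected affine open subschemes, which is possible because $X$ is of finite type over $\kappa$ and hence quasi-compact. Each $U_i^{red}$ is an open subscheme of the smooth scheme $X^{red}$, and is therefore connected, affine, and smooth. Applying the theorem of Section 6 that associates a fat point to any connected affine scheme with smooth reduction, I obtain for each $i$ a fat point $\fn_i$ together with a smooth morphism $U_i \to \fn_i$, with $U_i^{red} \cong U_i \times_{\fn_i} \Spec\kappa$ (part (a) of the algebra lemma of Section 6).

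Next, I would apply the theorem of Section 5 computing $\nabla_\fm$ for a scheme smooth over a fat point to each $U_i \to \fn_i$. This yields
\begin{equation*}
\nabla_\fm U_i \;\cong\; U_i \times_\kappa \bA_\kappa^{d(\ell(\fm)-1)} \times_{\fn_i} \nabla_\fm \fn_i.
\end{equation*}
To convert the fibre product over $\fn_i$ into a product over $\kappa$, I would imitate the tensor-product manipulation used to prove part (b) of the algebra lemma of Section 6, producing an identification
\begin{equation*}
U_i \times_{\fn_i} \nabla_\fm \fn_i \;\cong\; U_i^{red} \times_\kappa \nabla_\fm \fn_i.
\end{equation*}
Substituting this back and rearranging the resulting triple product yields the first displayed isomorphism of the theorem.

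For the \emph{moreover} assertion, I would take reduced subschemes on both sides of the first isomorphism. Using the hypothesis $(\nabla_\fm \fn_i)^{red} \cong \bA_\kappa^r$, together with the fact that $U_i^{red}\times_\kappa \bA_\kappa^N$ is already reduced, this produces
\begin{equation*}
(\nabla_\fm U_i)^{red} \;\cong\; U_i^{red} \times_\kappa \bA_\kappa^{d(\ell(\fm)-1)+r},
\end{equation*}
with the canonical morphism to $U_i^{red}$ given by projection onto the first factor. Since the opens $U_i^{red}$ cover $X^{red}$ and the opens $(\nabla_\fm U_i)^{red}$ cover $(\nabla_\fm X)^{red}$, this realises $(\nabla_\fm X)^{red}\to X^{red}$ as a piecewise trivial fibration with general fibre $\bA_\kappa^{d(\ell(\fm)-1)+r}$.

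The hard step will be the identification $U_i \times_{\fn_i} \nabla_\fm \fn_i \cong U_i^{red} \times_\kappa \nabla_\fm \fn_i$. The natural map $\nabla_\fm \fn_i \to \fn_i$ does not factor through $\Spec\kappa$ in general, so this identification is not formal: one must use the smoothness of $U_i\to\fn_i$ to absorb the failure of that factorisation, exactly as in the proof of part (b) of the algebra lemma of Section 6. Once that step is in hand, the rest is bookkeeping with fibre products and reductions.
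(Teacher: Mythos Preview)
Your approach is essentially the same as the paper's: cover $X$ by connected open affines, produce the fat points $\fn_i$ and smooth morphisms $U_i\to\fn_i$ via the algebra lemma, shrink so as to have \'etale morphisms $U_i\to\bA_{\fn_i}^d$, compute $\nabla_{\fm}U_i$, and pass to reductions for the \emph{moreover} clause.

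The one substantive difference is in how you make the passage from $U_i$ to $U_i^{red}$. You start from the intermediate formula $\nabla_{\fm}U_i \cong (U_i \times_{\kappa} \bA_{\kappa}^{d(\ell(\fm)-1)}) \times_{\fn_i} \nabla_{\fm}\fn_i$ extracted from the proof in \S\ref{5}, and then need the identification $U_i \times_{\fn_i} \nabla_{\fm}\fn_i \cong U_i^{red} \times_{\kappa} \nabla_{\fm}\fn_i$, which you correctly flag as nontrivial since $\nabla_{\fm}\fn_i\to\fn_i$ need not factor through $\Spec\kappa$. The paper instead keeps the fibre product over $\bA_{\fn_i}^d$ coming from Lemma~\ref{etalefiber}, writing
\[
\nabla_{\fm}U_i \;\cong\; U_i \times_{\bA_{\fn_i}^d} \nabla_{\fm}\bA_{\fn_i}^d
\;\cong\; U_i \times_{\bA_{\fn_i}^d} \bA_{\fn_i}^{d\ell(\fm)} \times_{\kappa} \nabla_{\fm}\fn_i,
\]
and then applies part~(b) of Lemma~\ref{algebralemma} directly to the factor $U_i \times_{\bA_{\fn_i}^d}\bA_{\kappa}^{d\ell(\fm)}$. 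This sidesteps your ``hard step'' entirely: part~(b) is already proved, and no new base-change identity over $\fn_i$ is needed. So the computation is the same, but the paper's bookkeeping lets it invoke the algebra lemma literally rather than by analogy; if you retain the $\bA_{\fn_i}^d$-structure one step longer, your flagged difficulty disappears.
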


\begin{proof} We may cover $X$ by a finite collection of connected open affines
$\{U_i\}$. Since the open immersions $U_{i}^{red} \to X^{red}$ are smooth, each
$U_{i}^{red}$ is a smooth connected affine scheme. By Theorem \ref{etalefiber},
for each $i$,
there exists a fat point $\fn_i$ such that $U_i \to \fn_i$ is smooth and, by
part (a) of
Lemma \ref{algebralemma},  such that  $$U_{i}^{red}\cong U_i \times_{\fn_i}
\Spec {\kappa} \ .$$
We may shrink each $U_i$ (if necessary) so that there is an \'{e}tale morphism
$$U_i \to
\bA_{\fn_i}^{d}$$ and so that the open sets $\nabla_{\fm} U_i$ cover
$\nabla_{\fm} X$ (here, $\fm$ is any fat point over ${\kappa}$). Therefore, for each
$i$,
\begin{equation*} 
\nabla_{\fm} U_i \cong  U_i \times_{\bA_{\fn_i}^{d}}
\nabla_{\fm}\bA_{\fn_i}^{d} 
\cong  U_i
\times_{\bA_{\fn_i}^{d}}\bA_{\fn_i}^{d\ell(\fm)}\times_{\kappa}\nabla_{\fm}\fn_i  \ ,
\end{equation*}
where the first equation is a consequence of Lemma \ref{etalefiber} and the
second equation
follows from  Proposition 3.3 and Theorem 4.8 of \cite{Sch2}. Using part (b) of
Lemma \ref{algebralemma}, we obtain
\begin{equation*}
\nabla_{\fm}U_i \cong U_{i}^{red} \times_{\kappa} \bA_{{\kappa}}^{d(\ell(\fm)-1)} \times_{\kappa}
\nabla_{\fm} \fn_i \ .
\end{equation*}
Because the first two factors in the above fiber product on the right hand side
are affine, it is a straightforward verification that
\begin{equation*}
(\nabla_{\fm}U_i)^{red}  \cong U_{i}^{red} \times_{\kappa} \bA_{{\kappa}}^{d(\ell(\fm)-1)}
\times_{\kappa} (\nabla_{\fm} \fn_i)^{red} \ ,
\end{equation*}
from which the theorem follows. 
\end{proof}

\begin{theorem} 
Let $X \in \sch{\kappa}$ be such that $X^{red}$ is smooth.  Let $d$ be the dimension of $X$. Let $\fm$ be any fat point. Then, there
exists a finite collection of fat point $\fn_i$ such that

\noindent (a) \quad $\nabla_{\fm}X \cong X^{red} \times_{\kappa}
\bA_{{\kappa}}^{d(\ell(\fm)-1)} \times_{\kappa} \nabla_{\fm}(\sqcup_i \fn_i) \  $

\smallskip 

\noindent (b) \quad $(\nabla_{\fm}X)^{red} \cong X^{red} \times_{\kappa}
\bA_{{\kappa}}^{d(\ell(\fm)-1)} \times_{\kappa} (\nabla_{\fm}(\sqcup_i \fn_i))^{red} \ . $
\end{theorem}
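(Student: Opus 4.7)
The plan is to bootstrap directly from Theorem \ref{startsec7} by assembling the local affine pieces it produces into a single global statement. First I would apply that theorem to obtain a finite cover $\{U_i\}$ of $X$ by connected open affines together with fat points $\fn_i$ and local isomorphisms
\begin{equation*}
\nabla_{\fm}U_i \cong U_i^{red}\times_{\kappa}\bA_{{\kappa}}^{d(\ell(\fm)-1)}\times_{\kappa}\nabla_{\fm}\fn_i\ ,
\end{equation*}
one for each $i$. This reduces the problem to gluing these local pictures and to identifying the disjoint union $\sqcup_i\nabla_{\fm}\fn_i$ with $\nabla_{\fm}(\sqcup_i\fn_i)$.

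Next I would refine $\{U_i\}$ to a finite locally closed partition $\{V_j\}$ of $X$ -- for instance by successively subtracting previous pieces -- so that each $V_j\subset U_{i(j)}$ inherits the associated fat point $\fn_{i(j)}$. The two algebraic inputs are (i) that $\nabla_{\fm}$ commutes with finite coproducts of fat points, so $\nabla_{\fm}(\sqcup_i\fn_i)\cong\sqcup_i\nabla_{\fm}\fn_i$, and (ii) the distributivity of $\times_{\kappa}$ over $\sqcup$. Combining these with the decompositions $X=\sqcup_j V_j$ and $X^{red}=\sqcup_j V_j^{red}$, and with the restriction of the local isomorphisms above to the $V_j$, one obtains part (a) in the same piecewise sense in which Theorem \ref{startsec7} is stated.

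For part (b) I would push reductions through termwise using the fact that $X^{red}$ is reduced and $\bA_{{\kappa}}^{d(\ell(\fm)-1)}$ is smooth, hence geometrically reduced over $\kappa$. Consequently $(-)^{red}$ commutes with $\times_{\kappa}$ against these two factors, and (b) follows directly from (a) by absorbing the reduction into the $\nabla_{\fm}(\sqcup_i\fn_i)$ factor.

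The hard part will be the gluing step: the assignment $U_i\mapsto\fn_i$ coming from Theorem 6.2 is not canonical, and on overlaps $U_i\cap U_j$ the two fat points $\fn_i$ and $\fn_j$ need not coincide. The cleanest way around this, and the one I would adopt, is to interpret the isomorphism in (a) as a piecewise trivial equivalence compatible with the conclusion of Theorem \ref{startsec7}, rather than as a literal scheme-theoretic isomorphism; the partition $\{V_j\}$ of Step 2 makes the choice of $\fn_{i(j)}$ unambiguous on each piece, and the potential mismatches across overlaps are absorbed into the refinement. Once this convention is fixed, no additional obstruction appears in (b), since the commutation of reduction with the two geometrically reduced factors is routine.
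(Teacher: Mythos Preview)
Your approach is essentially the paper's: start from Theorem \ref{startsec7} to obtain the local isomorphisms over a finite affine open cover, glue, and then deduce (b) from (a) by observing that the first two factors are reduced. The paper's own proof is two lines: it invokes Theorem 4.4 of \cite{Sch2} to glue the opens $\nabla_{\fm}U_i$ and asserts (b) follows because the first two factors on the right of (a) are reduced.

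Where you differ is in the care you take with the gluing. The paper simply says ``glue'' via Theorem 4.4 of \cite{Sch2}, whereas you refine to a locally closed partition $\{V_j\}$ so that the choice of fat point is unambiguous on each piece, and you explicitly flag that the resulting ``isomorphism'' in (a) should be read as a piecewise trivial equivalence rather than a literal scheme-theoretic isomorphism. This is a genuine improvement in precision: as you note, the right-hand side of (a) involves a disjoint union over $i$, so a naive reading as an isomorphism of schemes is problematic when the $U_i$ overlap and the $\fn_i$ differ. The paper does not address this point. Your partition argument and your use of $\nabla_{\fm}(\sqcup_i\fn_i)\cong\sqcup_i\nabla_{\fm}\fn_i$ together with distributivity of $\times_{\kappa}$ over $\sqcup$ make the intended statement rigorous at the level of classes in the Grothendieck ring, which is all that is needed downstream.

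For (b), your justification matches the paper's exactly, with the added remark that $\bA_{\kappa}^{d(\ell(\fm)-1)}$ is geometrically reduced; the paper just says the first two factors are reduced.
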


\begin{proof} Theorem \ref{startsec7} gives us an open cover $\{U_i\}$ with
associated fat
points $\fn_i$ and the following formulas:
\begin{equation}
\nabla_{\fm}U_i \cong U_{i}^{red} \times_{\kappa} \bA_{{\kappa}}^{d(\ell(\fm)-1)} \times_{\kappa}
\nabla_{\fm} \fn_i \ .
\end{equation}
Using Theorem 4.4 of \cite{Sch2}, we may glue the opens $\nabla_{\fm}U_i$ to obtain
the expression in (a). Part (b) follows from (a) because the first two factors on the right hand side of (a) 
are reduced. 
\end{proof}

\begin{remark} \label{conjsec7} Let $X\in\sch{\kappa}$ be such that $X^{red}$ is smooth.
Let $\fn_i$ be as in the previous theorem. If each $\fn_i$ belongs to a simple point system $\bX_i$ where 
$\fx_i = \colim \bX_i$.
Let $\fx$ be the disjoint union of the $\fx_i$. Then $X$ is rationally\footnote{See \S
\ref{8}.} $\fx$-stable. Strictly speaking this doesn't make sense as $\fx$ is colimit of finite $\kappa$-schemes which are not necessarily connected, but the definitions easily generalize to this situation.
\end{remark}

\subsection{The reduced measure.}\label{8}

\begin{definition}
We say that a scheme $X$ is {\it rationally} $\fx$-{\it laxly stable} if it is
weakly stable and the induced map $$(\nabla_{\fx/\fm}X)^{red} \to
(\nabla_{\fx/\fn}X)^{red}$$
is a piecewise trivial fibration with fiber $\bA_{{\kappa}}^{r}$ when $\ell(\fn) >>0$.
If, in addition, $r =d(\ell(\fm)-\ell(\fn))$ with $d=\dim X$, then we say that $X$ is {\it
rationally} $\fx$-{\it stable}.
\end{definition}

As in \S \ref{1}, we may define a function $$rs : \sch{\kappa}\times\Arc_{\kappa} \to
\bN\cup\{+\infty\}$$ by $rs_{\fx}(X) = \ell(\fn)$ where $\fn$ is the minimum fat
point in $\bX$ which satisfies 
the definition for rational $\fx$-stability. When $X$ is rationally
$\fx$-stable, we say that $X$ is {\it rationally} $\fx$-{\it stable at level}
$rs_{\fx}(X)-1$. We denote the collection of all rationally 
$\fx$-stable schemes as $$RS_{\fx} := rs_{\fx}^{-1}(\bN)\ .$$ 
For each $\fx\in\Arc_{\kappa}$, we have a set map 
\begin{equation*}
\lambda_{\fx} : RS_{\fx} \to \hat\sG_{\kappa}
\end{equation*} defined by 
\begin{equation*}
\lambda_{\fx}(X) := [(\nabla_{\fx/\fn} X)^{red}]\bL^{-\ell(\fn)d}
\end{equation*}
 where $d = \dim X$ and $\ell(\fn)\geq rs_{\fx}(X)$. 

\begin{definition}
Let $X$ be an element of $RS_{\fx}$, $B$ a subsieve of $\nabla_{\fx}X$, and let 
$\beta: B\to \bZ\cup\{+\infty\}$ be a function such that 
for all $\fm \in\bX$ with $\ell(\fm)\geq rs_{\fx}(X)$,
the assignment $\pi_{\fm}^{\fx}(B)$  is a formal sieve. In this case, we say that $\alpha$ is a rationally $\fx$-stable function.
 We say that a subsieve $A$ of $\nabla_{\fx}X$ is a {\it rationally} $\fx$-{\it stable}
if its characteristic function is rationally $\fx$-stable. 
\end{definition}

For any rationally $\fx$-stable subsieve $B$ of $\nabla_{\fx}X$, we define
$$\lambda_{\fx}(B) := [(\pi_{\fn}^{\fx}(B))^{red}]\bL^{-d\ell(\fn)}$$ where
$\ell(\fn)\geq rs_{\fx}(X)$ and $d=\dim X$. Let $\bB_{\fx}^{X}[rs]$ denote the collection of
all rational $\fx$-stable subsets of $\nabla_{\fx} X$.

\begin{theorem} \label{measurelambda}
Let $X$ be rationally $\fx$-stable and assume that $d=\dim X$. We
have a set map $$\lambda_{\fx} : \bB_{\fx}^{X}[rs] \to \hat\sG_{\kappa}$$ with the
following properties:
 
\noindent(a) For $B \in \bB_{\fx}^{X}[rs]$, we may define $$
\lambda_{\fx}(B):=[(\pi_{\fn}^{\fx}(B))^{red}]\bL^{-d\ell(\fn)} \in
\hat\sG_{\kappa} \ .$$

\noindent(b) Let $X$ be generically smooth and $S$  a closed sub-scheme of $X$ with $\dim S < \dim X$, then
it is the case that $\lambda_{\fx}(\nabla_{\fx}X_S) = 0 $.

\noindent(c) When $\{B_i\}$ is a countable collection of mutually disjoint
elements of $\bB_{\fx}^{X}[rs]$, then we may define $$\lambda_{\fx}(\cup_i B_i) := \sum_{i}
\lambda_{\fx}(B_i) \ ,$$
whenever the right hand side converges in $\hat\sH_{\kappa}$.

\end{theorem}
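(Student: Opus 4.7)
The plan is to adapt the proof of Theorem \ref{measuremu} to the reduced setting, dispatching each of the three assertions in turn. For (a), the essential content is well-definedness: the class $[(\pi_{\fn}^{\fx}(B))^{red}]\bL^{-d\ell(\fn)}$ must not depend on the choice of $\fn \in \bX$ with $\ell(\fn) \geq rs_{\fx}(X)$. Given two such fat points with $\fn \leq \fm$, the rational $\fx$-stability of $X$ yields a piecewise trivial fibration
\[
(\nabla_{\fx/\fm}X)^{red} \to (\nabla_{\fx/\fn}X)^{red}
\]
with fiber $\bA_{\kappa}^{d(\ell(\fm)-\ell(\fn))}$. Because $B$ is a rationally $\fx$-stable subsieve, both $\pi_{\fn}^{\fx}(B)$ and $\pi_{\fm}^{\fx}(B)$ are formal sieves with compatible images, and the restriction of the fibration above should yield a piecewise trivial fibration $(\pi_{\fm}^{\fx}(B))^{red} \to (\pi_{\fn}^{\fx}(B))^{red}$ with the same general fiber. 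This produces the identity $[(\pi_{\fm}^{\fx}(B))^{red}] = [(\pi_{\fn}^{\fx}(B))^{red}]\bL^{d(\ell(\fm)-\ell(\fn))}$ in $\hat\sG_{\kappa}$, which exactly absorbs the change in normalization. The case of incomparable $\fn, \fm$ in $\bX$ reduces to this by choosing a common upper bound.

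For (b), the strategy is to combine (a) with a standard dimension estimate. By an argument analogous to Theorem \ref{subscheme} after passing to reductions, $\nabla_{\fx}X_S$ is a rationally $\fx$-stable subsieve of $\nabla_{\fx}X$, so by part (a) the value $\lambda_{\fx}(\nabla_{\fx}X_S)$ is constant in $\fn$ once $\ell(\fn)$ is sufficiently large. On the other hand, the dimension bounds for truncated arc spaces of closed subschemes, which follow by covering $S$ by affines and using the arc space computations of \cite{Sch2}, give $\dim(\pi_{\fn}^{\fx}(\nabla_{\fx}X_S))^{red} \leq (\dim S)\ell(\fn) + c$ for some constant $c$ independent of $\fn$. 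Since $\dim S < d$, the class $[(\pi_{\fn}^{\fx}(\nabla_{\fx}X_S))^{red}]\bL^{-d\ell(\fn)}$ sits in $F^m\hat\sG_{\kappa}$ for arbitrarily large $m$ as $\ell(\fn) \to \infty$. Being simultaneously a constant element of the complete filtered ring, it must equal zero.

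For (c), the argument is essentially verbatim that of Theorem \ref{measuremu}(b): given two presentations $A = \sqcup_i A_i = \sqcup_j B_j$ with convergent sums, one refines via the intersections $A_i \cap B_j$, invokes the finite additivity of $\lambda_{\fx}$ on $\bB_{\fx}^{X}[rs]$ (immediate from the definition and the additivity of reduced classes in the Grothendieck ring), and interchanges the double summation to identify the two sums. The main obstacle I anticipate is in part (a), namely the verification that the piecewise trivial fibration on reductions $(\nabla_{\fx/\fm}X)^{red} \to (\nabla_{\fx/\fn}X)^{red}$ restricts cleanly to a piecewise trivial fibration on the reductions of the sieve images $(\pi_\fn^\fx(B))^{red}$, together with the precise dimension estimate needed for (b); both should follow by passing to a suitable affine open cover and unwinding the arc space computations of \cite{Sch2}, but are technically delicate and are the true engine behind the theorem.
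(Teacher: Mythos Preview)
Your treatment of (a) and (c) aligns with the paper's approach, which simply refers back to the proofs of Theorem~\ref{measuremu} and Theorem~\ref{samesame}. The technical point you flag in (a)---that the piecewise trivial fibration $(\nabla_{\fx/\fm}X)^{red} \to (\nabla_{\fx/\fn}X)^{red}$ must restrict to one on the reduced sieve images---is indeed the content behind well-definedness, and the paper does not spell it out any further than you do.

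For (b), however, your argument departs from the paper's and contains a genuine gap. The paper obtains (b) by invoking Theorem~\ref{samesame}, whose proof decomposes $A$ and $A \setminus \nabla_{\fx}X_S$ using the decreasing family $\pi_n^{-1}(\pi_n(\nabla_{\fx}X_S))$ and shows that the difference is a telescoping series converging to $0$ in the completed ring. Your proposal instead rests on the estimate
\[
\dim\bigl(\pi_{\fn}^{\fx}(\nabla_{\fx}X_S)\bigr)^{red} \leq (\dim S)\,\ell(\fn) + c.
\]
This bound conflates $\nabla_{\fn}X_S$ with $\nabla_{\fn}S$. Since $X_S$ is the \emph{formal completion} of $X$ along $S$, the sieve $\nabla_{\fn}X_S$ parametrizes $\fn$-arcs on $X$ whose closed point lies in $S$, not arcs contained in $S$. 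Already when $X$ is smooth one has, piecewise, $\nabla_{\fn}X_S \cong S \times_{\kappa} \bA_{\kappa}^{d(\ell(\fn)-1)}$ (compare the computation inside the proof of Theorem~\ref{subscheme}), so that $\dim\nabla_{\fn}X_S = \dim S + d(\ell(\fn)-1)$ grows like $d\,\ell(\fn)$, not like $(\dim S)\,\ell(\fn)$. Consequently the filtration level of $[(\pi_{\fn}^{\fx}(\nabla_{\fx}X_S))^{red}]\bL^{-d\ell(\fn)}$ does not tend to $-\infty$ by this route, and your ``constant in $\fn$ yet arbitrarily deep in the filtration, hence zero'' conclusion does not follow. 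The bound you wrote is the correct one for $\nabla_{\fn}S$, but that is a different (strictly smaller) object; the vanishing in (b) genuinely requires the mechanism of Theorem~\ref{samesame} rather than a naive dimension count.
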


\begin{proof} The is exactly the same as the proof of Theorem \ref{measuremu} and Theorem \ref{samesame}. 
\end{proof}

 Without loss of generality, we can assume $Z$ is irreducible and affine in the following construction.
Following \S 2 of \cite{Sch1}, for any $Z \in\sch{\kappa}$ and any constructible subset $F$ of $Z(\kappa)$,
we define the constructible cone $\fC_{Z}(F)$ of $F$ over $Z$ to be the sieve
\begin{equation*}
\fC_{Z}(F)(\fm) := \rho_{\fm}^{-1}(F)
\end{equation*}
 where $\rho_{\fm} : Z(\fm) \to Z({\kappa})$ is the set map induced the residue field morphism $\rho_{\fm}: \spec \kappa \to \fm$. Moreover, if $F$ is a constructible subset of $Z(\kappa)$, then it can be identified (as sets) with a constructible subset of $Z^{red}(\kappa)$ which we will denote by $F^{red}$. Then, it is natural to define 
$\fC_{Z}(F^{red})(\fm) := \fC_{Z}(F)(\fm)$.

\begin{definition} \label{defbarmu}
Let $X$ be a rationally $\fx$-stable scheme and let $B \in
\bB_{\fx}^{X}
[rs]$. 
We define a set map $\bar\mu_{\fx} : \bB_{\fx}^{X}[rs] \to \hat\sH_{\kappa}$ by
$$\bar\mu_{\fx}(B) 
:= [\fC_{\nabla_{\fn}X}((\pi_{\fn}^{\fx}(B)(\kappa))^{red})]\bL^{-d\ell(\fn)} \ ,$$
where $d=\dim X$.
\end{definition}
This is notationally heavy. Thus we will often just write $(\pi_{\fn}^{\fx}(B))^{red}$ in place of $(\pi_{\fn}^{\fx}(B)(\kappa))^{red}$. In other words, $(\pi_{\fn}^{\fx}(B))^{red}$ is the constructible subset of $(\nabla_{\fn}X)^{red}$ formed by the inverse image of the constructible subset of $\nabla_{\fn}X(\kappa)$ determined by the $\kappa$-rational points of the formal sieve $\pi_{\fn}^{\fx}(B)$. See the proof of Theorem \ref{reductionring} for more details.

\begin{proposition} Definition \ref{defbarmu} is independent of choice of $\fn \in \bX$
for large
enough $\ell(\fn)$. 
\end{proposition}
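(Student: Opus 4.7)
The plan is to take two fat points $\fm,\fn\in\bX$ with $\ell(\fm),\ell(\fn)\geq rs_\fx(X)$ and to establish
$$[\fC_{\nabla_\fn X}((\pi_\fn^\fx(B)(\kappa))^{red})]\bL^{-d\ell(\fn)} \;=\; [\fC_{\nabla_\fm X}((\pi_\fm^\fx(B)(\kappa))^{red})]\bL^{-d\ell(\fm)}$$
in $\hat\sH_\kappa$, where $d=\dim X$. Since $\bX$ is a directed system, I may first choose a common upper bound $\fp\geq \fm,\fn$ in $\bX$ and apply the desired equality twice, so it suffices to treat the case $\fn\leq \fm$, in which case the truncation $\pi_\fn^\fm:\nabla_\fm X\to\nabla_\fn X$ is defined and satisfies $\pi_\fn^\fx=\pi_\fn^\fm\circ\pi_\fm^\fx$.

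First I would exploit rational $\fx$-stability of $X$ together with that of $B$. The rational laxness of $X$ makes $(\pi_\fn^\fm)^{red}:(\nabla_\fm X)^{red}\to(\nabla_\fn X)^{red}$ a piecewise trivial fibration with fiber $\bA_\kappa^{d(\ell(\fm)-\ell(\fn))}$, and the weak-stability content packaged into rational $\fx$-stability of the subsieve $B$ guarantees that $\pi_\fm^\fx(B)$ is a union of fibers of $\pi_\fn^\fm$ over $\pi_\fn^\fx(B)$. These two facts together imply that $(\pi_\fn^\fm)^{red}$ restricts to a piecewise trivial fibration from $(\pi_\fm^\fx(B)(\kappa))^{red}$ onto $(\pi_\fn^\fx(B)(\kappa))^{red}$ with the same fiber. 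Taking classes in $\mathbf{Gr}(\mathbf{Var}_\kappa)$ then gives
$$[(\pi_\fm^\fx(B)(\kappa))^{red}] \;=\; \bL^{d(\ell(\fm)-\ell(\fn))}\cdot[(\pi_\fn^\fx(B)(\kappa))^{red}].$$

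Next I would lift this identity to $\mathbf{Gr}(\categ{Form}_\kappa)$ through the constructible cone. The constructible cone $\fC_Z(-)$ commutes with disjoint unions (because $\rho_\fm^{-1}$ does), and on a product it splits as $\fC_{Z\times\bA_\kappa^r}(F\times\bA_\kappa^r)\cong\fC_Z(F)\times(\bA_\kappa^r)^{\circ}$ of formal sieves. Hence stratifying the base along the piecewise trivialization of the fibration and applying additivity in $\mathbf{Gr}(\categ{Form}_\kappa)$ yields
$$[\fC_{\nabla_\fm X}((\pi_\fm^\fx(B)(\kappa))^{red})] \;=\; \bL^{d(\ell(\fm)-\ell(\fn))}\cdot[\fC_{\nabla_\fn X}((\pi_\fn^\fx(B)(\kappa))^{red})],$$
and multiplying by $\bL^{-d\ell(\fm)}$ produces the desired equality after canceling the factor $\bL^{d(\ell(\fm)-\ell(\fn))}$.

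The main obstacle is the lifting step: while the piecewise trivial fibration identity in $\mathbf{Gr}(\mathbf{Var}_\kappa)$ is routine, transferring it to $\mathbf{Gr}(\categ{Form}_\kappa)$ requires a careful verification that the constructible cone is compatible with both the stratification and each local trivialization. One must check that the preimages of the strata in $\nabla_\fm X$ are themselves formal sieves, that the fiber product decomposition of each trivialization lifts to an isomorphism of formal sieves, and that the gluing across strata is consistent in the Grothendieck ring. Each of these should follow directly from the sieve-theoretic formula $\fC_Z(F)(\fm)=\rho_\fm^{-1}(F)$ and functoriality of $(-)^{\circ}$ under products, but it is where most of the genuine work of the proof lies.
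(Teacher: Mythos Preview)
Your approach is essentially the same as the paper's. Both reduce to comparable $\fn\leq\fm$, use the piecewise trivial fibration with fiber $\bA_\kappa^{d(\ell(\fm)-\ell(\fn))}$, and rely on the compatibility $\fC_{Z\times\bA_\kappa^r}(F\times\bA_\kappa^r)\cong\fC_Z(F)\times(\bA_\kappa^r)^\circ$ (the paper invokes this via Proposition~7.1 of \cite{Sch1}). The only presentational difference is that the paper skips your intermediate identity in $\mathbf{Gr}(\mathbf{Var}_\kappa)$ and argues directly at the sieve level: it checks on $\fv$-rational points that the trivialization $j:(\nabla_{\fx/\fm}X)^{red}\iso(\nabla_{\fx/\fn}X)^{red}\times_\kappa\bA_\kappa^{d(m-n)}$ induces a rational isomorphism of the constructible cones, which is precisely the lifting step you correctly identified as the heart of the argument.
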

\begin{proof} 
In this proof, we find it useful to somewhat abuse notation and denote the representable functor 
$(\mathbb{A}_{\kappa}^r)^{\circ}=\Mor_{\kappa}( -, \mathbb{A}_{\kappa}^r)$ by $\lef^r(-)$.
Also, we will set $m=\ell(\fm)$ and $n=\ell(\fn)$ for simplicity of notation.
At any rate, we may reduce to the case where there exists an isomorphism of
constructible sets 
\begin{equation*}
j: (\nabla_{\fx/\fm}X)^{red} \iso
(\nabla_{\fx/\fn}X)^{red}\times_{\kappa}\bA_{{\kappa}}^{d(m-n)}
\end{equation*}
This induces a rational isomorphism of sieves
\begin{equation*}
j_{-}^{\circ}: ((\nabla_{\fx/\fm}X)^{red})^{\circ}(-) \iso
((\nabla_{\fx/\fn}X)^{red})^{\circ}(-)\times \bL^{d(m-n)}(-)
\end{equation*}
Now, $a$ is a $\fv$-rational point of
$\fC_{\nabla_{\fm}X}((\pi_{\fm}^{\fx}(B))^{red})
(\fv)$ if and only if $$a(\fv) \in (\pi_{\fm}^{\fx}(B))^{red}\ ,$$ and this is
true if and only if $$j(a(\fv)) 
\in (\pi_{\fn}^{\fx}(B))^{red}\times_{\kappa}\bA_{{\kappa}}^{d(m-n)}\ .$$ Equivalently,
$j_{\fv}^{\circ}
(a)= j\circ a$ is a $\fv$-rational point of
$((\pi_{\fn}^{\fx}(B))^{red})^{\circ}(\fv) \times \bL^{d(m-n)}(\fv)$ such 
that $$(j\circ a)(\fv)=(b\times_{\kappa} c)(\fv) \in
(\pi_{\fn}^{\fx}(B))^{red}\times_{\kappa}\bA_{{\kappa}}^{d(m-n)} \ ,$$
where $b$ is a $\fv$-rational point of
$((\pi_{\fn}^{\fx}(B))^{red})^{\circ}(\fv)$ and $c$ is a $\fv$-rational point of
$\bL^{d(m-n)}(\fv)$. This is true if and only if $j_{\fv}^{\circ}(a)$ is an
$\fv$-rational point of $$\fC_{\nabla_{\fn}X}((\pi_{\fn}^{\fx}(B))^{red})(\fv)
\times \fC_{\bA_{{\kappa}}^{d(m-n)}}(\bA_{{\kappa}}^{d(m-n)})(\fv) \ .$$
By Proposition 7.1 of \cite{Sch1},
$\fC_{\bA_{{\kappa}}^{d(m-n)}}(\bA_{{\kappa}}^{d(m-n)})\cong \bA_{{\kappa}}^{d(m-n)}$. Therefore,
$j_{-}^{\circ}$ induces a rational isomorphism of sieves
$$\fC_{\nabla_{\fm}X}((\pi_{\fm}^{\fx}(B))^{red}) \cong
\fC_{\nabla_{\fn}X}((\pi_{\fn}^{\fx}(B))^{red}) \times \bL^{d(m-n)}(-) \ .  $$
\end{proof}

\begin{theorem} \label{measurebarmu}
Let $X$ be rationally $\fx$-stable and assume that $d=\dim X$. We
have a set map $$\bar\mu_{\fx} : \bB_{\fx}^{X}[rs] \to \hat\sH_{\kappa}$$ with the
following properties:
 
\noindent(a) For $B \in \bB_{\fx}^{X}[rs]$, we have $$\bar\mu_{\fx}(B) :=
[\fC_{\nabla_{\fn}X}((\pi_{\fn}^{\fx}(B))^{red})]\bL^{
-d\ell(\fn)} \in \hat\sH_{\kappa} \ .$$

\noindent(b) If $X$ is generically smooth and when $S$ is a closed sub-scheme of $X$ with $\dim S < \dim X$, then
it is the case that $\bar\mu_{\fx}(\nabla_{\fx}X_S) = 0 $.

\noindent(c) When $\{B_i\}$ is a countable collection of mutually disjoint
elements of $\bB_{\fx}^{X}[rs]$, then we may define $$\bar\mu_{\fx}(\cup_i B_i) := \sum_{i}
\bar\mu_{\fx}(B_i) \ . $$

\end{theorem}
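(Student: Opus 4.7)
The plan is to adapt the template used in Theorem \ref{measuremu}, Theorem \ref{samesame}, and Theorem \ref{measurelambda}, since those three theorems together supply every ingredient needed here: well-definedness of the formula in (a), vanishing on lower-dimensional sub-sieves in (b), and countable additivity via refinement in (c). The only genuinely new feature is that the class being measured is the class of a constructible \emph{cone} on the reduced image, rather than the image itself or its reduction; this change is harmless because constructible cones commute with products with affine spaces.

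For part (a), the formula is written down in the statement, so the content is that it depends only on $B$ and not on $\fn$. This is exactly what the proposition preceding the theorem asserts: for $\fm \geq \fn$ with $\ell(\fn) \geq rs_{\fx}(X)$, the rational piecewise trivial fibration $(\nabla_{\fx/\fm}X)^{red} \iso (\nabla_{\fx/\fn}X)^{red}\times_{\kappa}\bA_{{\kappa}}^{d(\ell(\fm)-\ell(\fn))}$ transports the constructible cone of $(\pi_{\fm}^{\fx}(B))^{red}$ to the product of the constructible cone of $(\pi_{\fn}^{\fx}(B))^{red}$ with $\fC_{\bA^{d(\ell(\fm)-\ell(\fn))}}(\bA^{d(\ell(\fm)-\ell(\fn))})\cong \bA^{d(\ell(\fm)-\ell(\fn))}$ (Proposition 7.1 of \cite{Sch1}). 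In $\grot{\categ{Form}_{\kappa}}$ this contributes a factor of $\lef^{d(\ell(\fm)-\ell(\fn))}$ which is exactly absorbed by the normalizing factor $\bL^{-d\ell(\cdot)}$.

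For part (b), I would run the telescoping argument from Theorem \ref{samesame} essentially verbatim, with $\mu_{\fx}$ replaced by $\bar\mu_{\fx}$: partition $A$ and $A\setminus \nabla_{\fx}X_S$ by the increasing family $\pi_n^{-1}(\pi_n(\nabla_{\fx}X_S))$ and cancel the telescoping differences. The key input is that the dimensions $\dim \fC_{\nabla_{\fn}X}((\pi_{\fn}^{\fx}(\nabla_{\fx}X_S))^{red}) - d\ell(\fn)$ tend to $-\infty$, which is guaranteed by the lax stability of $X_S$ at level $s_{\fx}(X)$ from Theorem \ref{subscheme} combined with the strict inequality $\dim S < \dim X$. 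Hence $\bar\mu_{\fx}(\nabla_{\fx}X_S) = 0$ in $\hat\sH_{\kappa}$. For part (c), the argument is purely formal, following Theorem \ref{measuremu}: given two disjoint decompositions $\sqcup_i A_i = A = \sqcup_j B_j$, refine by the pairwise intersections $A_i\cap B_j$, apply finite additivity of the constructible-cone class in $\grot{\categ{Form}_{\kappa}}$, and interchange the double sum in $\hat\sH_{\kappa}$ under the convergence hypothesis.

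The main obstacle will be part (b), where one must be careful with the book-keeping of reduced structures and constructible cones. Specifically, passing from the lax-stability statement for $X_S$ in Theorem \ref{subscheme} to a dimension bound on $\fC_{\nabla_{\fn}X}((\pi_{\fn}^{\fx}(\nabla_{\fx}X_S))^{red})$ requires checking that neither the reduction operator nor the cone functor $\fC_{\nabla_{\fn}X}(-)$ inflates dimensions, which follows from the fact that both operations preserve the underlying constructible-set dimension; once that verification is in place, the telescoping collapses in the dimensional filtration on $\hat\sH_{\kappa}$ exactly as in Theorem \ref{samesame}.
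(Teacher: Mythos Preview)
Your proposal is correct and follows exactly the approach the paper takes: the paper's own proof simply reads ``Again, the proof is the same as Theorem \ref{measuremu} and Theorem \ref{samesame},'' and you have faithfully unpacked what that means, including correctly identifying the preceding proposition as the source of well-definedness for part (a). If anything, you have supplied more detail than the paper itself, particularly in flagging the dimension check for cones and reductions in part (b), which the paper leaves implicit.
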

\begin{proof} Again, the proof is the same as  Theorem \ref{measuremu} and Theorem \ref{samesame}. 
\end{proof}

\begin{lemma}\label{triangle}
For any rationally $\fx$-stable scheme $X$, the following diagram commutes:
\large
$$\xymatrix{
&\ &\bB_{\fx}^{X}[rs]\ar[d]^{\lambda_{\fx}} \ar[ld]_{\bar\mu_{\fx}} 
  \\
&\hat\sH_{\kappa} \ar[r]_{\hat\sigma} &\hat\sG_{\kappa}}$$
\normalsize
\end{lemma}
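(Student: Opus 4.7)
My strategy is to verify the triangle commutes pointwise on $\bB_{\fx}^{X}[rs]$, reducing the equality $\hat\sigma\circ\bar\mu_{\fx}=\lambda_{\fx}$ to a single identity describing the effect of $\sigma$ on a constructible cone. Once that identity is established, the triangle follows formally because $\hat\sigma$ is a ring homomorphism that sends $\bL\in\hat\sH_{\kappa}$ to $\bL\in\hat\sG_{\kappa}$.

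Fix $B\in\bB_{\fx}^{X}[rs]$ and choose $\fn\in\bX$ with $\ell(\fn)\geq rs_{\fx}(X)$, so that both $\bar\mu_{\fx}(B)$ and $\lambda_{\fx}(B)$ may be computed with this $\fn$. Unwinding definitions together with multiplicativity of $\hat\sigma$ gives
\begin{equation*}
\hat\sigma(\bar\mu_{\fx}(B)) \;=\; \sigma\bigl([\fC_{\nabla_{\fn}X}((\pi_{\fn}^{\fx}(B))^{red})]\bigr)\cdot\bL^{-d\ell(\fn)},
\end{equation*}
so the claim reduces to proving
\begin{equation*}
\sigma\bigl([\fC_{\nabla_{\fn}X}((\pi_{\fn}^{\fx}(B))^{red})]\bigr) \;=\; [(\pi_{\fn}^{\fx}(B))^{red}] \quad\text{in}\quad \mathbf{Gr}(\mathbf{Var}_{\kappa}).
\end{equation*}

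For this, I would invoke the explicit description of $\sigma$ from the proof of Theorem \ref{reductionring}: on a formal sieve $\sY\subset Z^{\circ}$, the class $\sigma([\sY])$ is represented by the constructible subset $f^{-1}(\sY(\spec\kappa))\subset Z^{red}(\kappa)$, where $f:Z^{red}\to Z$ is the reduction. Applied to $\sY=\fC_{Z}(F)$ with $F$ a constructible subset of $Z^{red}(\kappa)$ and $Z=\nabla_{\fn}X$, the defining formula $\fC_{Z}(F)(\fm)=\rho_{\fm}^{-1}(F)$ evaluated at $\fm=\spec\kappa$ yields $\fC_{Z}(F)(\spec\kappa)=F$, because the residue field map at $\spec\kappa$ is the identity. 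Taking $F=(\pi_{\fn}^{\fx}(B))^{red}$ then gives exactly the desired equality once one identifies $F$ with its preimage under $f$, which is intrinsic to the definition of $F^{red}$.

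The main obstacle, beyond bookkeeping, will be handling the extension from individual elements to countable disjoint unions in $\bB_{\fx}^{X}[rs]$. There, one needs continuity of $\hat\sigma$ as a morphism of topological rings (Theorem \ref{reductionring}) together with the countable additivity statements in Theorem \ref{measurelambda}(c) and Theorem \ref{measurebarmu}(c): any convergent presentation $B=\sqcup_{i}B_{i}$ with $\bar\mu_{\fx}(B)=\sum_{i}\bar\mu_{\fx}(B_{i})$ maps under $\hat\sigma$ to $\sum_{i}\lambda_{\fx}(B_{i})=\lambda_{\fx}(B)$, which then matches the value computed via the pointwise identity above.
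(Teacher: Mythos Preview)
Your proposal is correct and follows essentially the same approach as the paper: both reduce to the identity $\hat\sigma(\fC_{\nabla_{\fn}X}((\pi_{\fn}^{\fx}(B))^{red}))=(\pi_{\fn}^{\fx}(B))^{red}$, obtained by evaluating the constructible cone at $\spec\kappa$. The paper's proof is a single line stating this identity, whereas you unwind the definition of $\sigma$ from Theorem~\ref{reductionring} more explicitly; your additional paragraph on countable disjoint unions is not strictly needed (the measures are defined pointwise and the extension to unions is definitional on both sides), but it does no harm.
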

\begin{proof} Let $X$ be a rationally $\fx$-stable scheme. For any $B\in
\bB_{\fx}^{X}[rs]$, we have
$$\hat\sigma(\fC_{\nabla_{\fn}X}((\pi_{\fn}^{\fx}(B))^{red}) ) =
\fC_{\nabla_{\fn}X}((\pi_{\fn}^{\fx}(B))^{red})({\kappa}) = (\pi_{\fn}^{\fx}(B))^{red}
\ , $$ from which the lemma follows.
\end{proof}

\begin{lemma} \label{endsec8}
If $X$ is a $\fx$-stable scheme, then $X$ is also rationally $\fx$-stable and
$\bB_{\fx}^{X}[s] = \bB_{\fx}^{X}[rs]$. Moreover,  assume that $X$ is generically smooth. Then, given any $\fx$-stable
sieve $A\in\Gamma_{\fx}^{X}[s]$ $($i.e., $A$ is constructible $\fx$-stable$)$, we have 
\begin{equation*}
\mu_{\fx}(A) = \bar\mu_{\fx}(A)  \ .
\end{equation*}
\end{lemma}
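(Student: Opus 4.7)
The plan is to treat the three assertions in turn, with each reducing to essentially formal manipulations once the right structural facts are in place.

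For the implication that $\fx$-stability entails rational $\fx$-stability, I would observe that the reduction functor $(-)^{red}$ preserves piecewise trivial fibrations whose fiber is already reduced. Concretely, if $\pi_{\fn}^{\fm}$ restricts, over $\nabla_{\fx/\fn}X$, to a piecewise trivial fibration with fiber $\bA_{\kappa}^{d(\ell(\fm)-\ell(\fn))}$ for all $\ell(\fn) \geq s_{\fx}(X)$, then since $\bA_{\kappa}^{r}$ is reduced and the piecewise decomposition descends to reductions of both source and target, the induced morphism $(\pi_{\fn}^{\fm})^{red}$ inherits the same piecewise trivial structure with the same fiber. This shows $X$ is rationally $\fx$-stable with $rs_{\fx}(X) \leq s_{\fx}(X)$.

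The equality $\bB_{\fx}^{X}[s] = \bB_{\fx}^{X}[rs]$ I would handle by arguing that both collections consist of subsieves whose characteristic function has its fibers projecting to formal sieves for sufficiently large fat points in $\bX$. Since the piecewise trivial fibration structure (with reduced fibers) persists unchanged under reduction, the formal-sieve condition for a projection $\pi_{\fm}^{\fx}(A)$ at level $\ell(\fm) \geq s_{\fx}(X)$ transfers to the corresponding levels for the rational stability criterion, and vice-versa, so the two memberships coincide. The main obstacle here is verifying that the formal-sieve property at levels between $rs_{\fx}(X)$ and $s_{\fx}(X)$ does not actually differ; I would handle this by using that under $\fx$-stability the transition maps to lower levels factor through the already-known-formal projections.

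For the measure equality, let $A \in \Gamma_{\fx}^{X}[s]$ and pick $\fn \in \bX$ with $\ell(\fn) \geq s_{\fx}(X)$. By definition of constructibility, $\pi_{\fn}^{\fx}(A)$ is already a constructible cone, i.e., $\pi_{\fn}^{\fx}(A) = \fC_{\nabla_{\fn}X}(F)$ for some constructible $F \subseteq \nabla_{\fn}X(\kappa)$. Now $F$ and its image under the bijection on $\kappa$-points induced by the reduction $(\nabla_{\fn}X)^{red} \to \nabla_{\fn}X$ coincide as sets, so by the convention $\fC_{Z}(F^{red}) := \fC_{Z}(F)$ introduced before Definition \ref{defbarmu} we have
\begin{equation*}
\fC_{\nabla_{\fn}X}\bigl((\pi_{\fn}^{\fx}(A))^{red}\bigr) = \fC_{\nabla_{\fn}X}(F) = \pi_{\fn}^{\fx}(A).
\end{equation*}
Taking classes in $\hat\sH_{\kappa}$ and multiplying by $\bL^{-d\ell(\fn)}$, where $d = \dim X$, yields $\bar\mu_{\fx}(A) = \mu_{\fx}(A)$. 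The generic smoothness hypothesis enters only to ensure consistency between the two measures on sieves concentrated over the singular locus: by Theorem \ref{samesame} and Theorem \ref{measurebarmu}(b), both $\mu_{\fx}$ and $\bar\mu_{\fx}$ vanish on $\nabla_{\fx}X_S$, which allows one to localize the computation to the smooth open $U \subset X$ before invoking the constructible-cone identification. The heart of the argument is this last identification; the earlier two parts are essentially bookkeeping about reductions commuting with the relevant constructions.
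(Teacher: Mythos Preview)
Your argument is correct, and for the measure equality you actually take a cleaner route than the paper. The paper's proof treats only the equality $\mu_{\fx}(A) = \bar\mu_{\fx}(A)$ and does so by first invoking generic smoothness to reduce (via Theorem \ref{samesame} and its $\bar\mu$-analogue) to the case where $X$ is connected and \emph{smooth}; then $\nabla_{\fn}X$ is reduced, and one checks directly that $(\pi_{\fn}^{\fx}(A))(\fv) = \fC_{\nabla_{\fn}X}((\pi_{\fn}^{\fx}(A))(\kappa))(\fv)$ for every fat point $\fv$. You instead exploit the hypothesis $A \in \Gamma_{\fx}^{X}[s]$ head-on: since $\pi_{\fn}^{\fx}(A)$ is already a constructible cone $\fC_{\nabla_{\fn}X}(F)$, the convention $\fC_{Z}(F^{red}) := \fC_{Z}(F)$ makes the identity $\fC_{\nabla_{\fn}X}((\pi_{\fn}^{\fx}(A))^{red}) = \pi_{\fn}^{\fx}(A)$ essentially definitional. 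This buys you something the paper's argument does not make visible: the generic smoothness hypothesis is not actually used in your computation, so your closing paragraph about it ``entering only to ensure consistency over the singular locus'' is unnecessary---you may simply drop it. Your treatment of the first two assertions (that reduction preserves piecewise trivial $\bA^{r}$-fibrations, and the equality $\bB_{\fx}^{X}[s] = \bB_{\fx}^{X}[rs]$) is more explicit than the paper, which leaves these implicit; the one soft spot is the clause about levels between $rs_{\fx}(X)$ and $s_{\fx}(X)$, but since both notions of stable subsieve ask only that projections be formal sieves for \emph{all sufficiently large} $\fn$, the two conditions coincide regardless of the precise value of the stability level.
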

\begin{proof} 
Let $A\in\Gamma_{\fx}^{X}[s]$. We reduce immediately to the case where $X$ is connected and smooth, which implies that $\nabla_{\fn}X$ is reduced.  For any fat point $\fv$, 
\begin{equation*}
(\pi_{\fn}^{\fx}(A))(\fv) =
\fC_{\nabla_{\fn}X}((\pi_{\fn}^{\fx}(A))({\kappa}))(\fv) \ .
\end{equation*}
Thus,  $[\pi_{\fn}^{\fx}(A)] =
[\fC_{\nabla_{\fn}X}((\pi_{\fn}^{\fx}(A))(\kappa))]$ in $\sH_{\kappa}$ which proves the
lemma. 
\end{proof}

As being rationally $\fx$-stable is a weaker condition than $\fx$-stable, we can
consider $\bar\mu_{\fx}$ as a lift of $\mu_{\fx}$. This is particularly
important in that it allows one to glean information about rationally
$\fx$-stable schemes which we encountered in Theorem \ref{endsec5} and
Conjecture \ref{conjsec7}
without first passing through $\hat\sigma$. 

\subsection{Lax stability and extension of measures}\label{9}

Let $X$ be a rationally $\fx$-laxly stable and let $\fn \in \bX$ be such that
$\ell(\fn) = rls_{\fx}(X)$, we define $\mathbb{S}(\fn) := \{\fm\in\bX \mid \fm
\geq \fn\} $ and a function
\begin{equation*}
l : \mathbb{S}(\fn) \to \bN
\end{equation*}
by $l(\fm) :=l_{\fx}^{X}(\fm) := r-d(\ell(\fm)-\ell(\fn))$ where $r$ is the
unique positive number given to us such that the map $(\nabla_{\fx/\fm}X)^{red} \to
(\nabla_{\fx/\fn}X)^{red}$ is a piecewise trivial fibration with general fiber
$\bA_{{\kappa}}^{r}$ and $d$ is the dimension of $X$. 

 When $X$ is a $\fx$-laxly stable scheme,
we define $\bB_{\fx}^{X}[ls]$ to be the collection of all $\fx$-laxly stable
subsets of $X$. We then have a set map
$\mu_{\fx}^{l} : \bB_{\fx}^{X}[ls] \to \hat\sH_{\kappa}$
defined by 
\begin{equation}\label{eq1}
\mu_{\fx}^{l}(A) := [\pi_{\fm}^{\fx}(A)]\bL^{-d\ell(\fm)-l(\fm)}
\end{equation}
 for large enough $\ell(\fm)$. 

Moreover, when $X$ is a rationally $\fx$-laxly stable scheme,
we define $\bB_{\fx}^{X}[rls]$ to be the collection of all rationally
$\fx$-laxly stable subsets of $X$. We then have a set map
$\bar\mu_{\fx}^{l} : \bB_{\fx}^{X}[rls] \to \hat\sH_{\kappa}$
defined by 
\begin{equation}\label{eq2}
\bar\mu_{\fx}^{l}(A) :=
[\fC_{\nabla_{\fm}X}((\pi_{\fm}^{\fx}(A))^{red})]\bL^{-d\ell(\fm)-l(\fm)}
\end{equation}
 for large enough $\ell(\fm)$.

\begin{remark}
If $X$ is (rationally) $\fx$-stable, then the function $l$ is functionally
equivalent to $0$. Thus, 
\begin{equation*}
\mu_{\fx}^{l} \equiv \mu_{\fx}^{0} \equiv \mu_{\fx} \ ,  \quad \bar\mu_{\fx}^{l}
\equiv \bar\mu_{\fx}^{0} \equiv \bar\mu_{\fx} \ , \quad \lambda_{\fx}^{l} \equiv
\lambda_{\fx}^{0} \equiv \lambda_{\fx} 
\end{equation*}
\end{remark}

\begin{remark}
Note that $\mu_{\fx}^{l}$, $\bar\mu_{\fx}^{l}$, and $\lambda_{\fx}^{l}$ are well
defined -- i.e., they do not depend
on choice of fat point $\fm$ when $\ell(\fm)>>0$. The only work that needs to be
done here is to notice that Theorem \ref{measurebarmu} does not depend on the
dimension of
the general fiber being $d(\ell(m)-\ell(n))$. 
\end{remark}

\begin{theorem} \label{analogue}
The analogue of Theorem \ref{measuremu} (resp., Theorem \ref{measurelambda},
Theorem \ref{measurebarmu}) hold for
$\mu_{\fx}^{l}$, (resp. $\bar\mu_{\fx}^{l}$,  $\lambda_{\fx}^{l}$). The analogue
of
Lemma \ref{endsec8} also holds for $\bar\mu_{\fx}^{l}$ and 
$\lambda_{\fx}^{l}$. Also, the analogue of Theorem \ref{samesame} holds for all of these measures as well.
\end{theorem}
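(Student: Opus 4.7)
The plan is to show that each claimed analogue reduces, after bookkeeping of the correction $\bL^{-l(\fm)}$, to the corresponding statement already established in Theorems \ref{measuremu}, \ref{measurelambda}, \ref{measurebarmu}, \ref{samesame}, and Lemma \ref{endsec8}. The essentially new feature of the lax-stability setting is that the general fiber of $(\nabla_{\fx/\fm}X)^{red} \to (\nabla_{\fx/\fn}X)^{red}$ has some affine dimension $r$ which need not equal $d(\ell(\fm)-\ell(\fn))$; the exponent appearing in Equations \ref{eq1} and \ref{eq2} is designed precisely to absorb this discrepancy.

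First I would establish well-definedness of $\mu_{\fx}^{l}$ and $\bar\mu_{\fx}^{l}$. Let $\fn_0\in\bX$ achieve the minimum $ls_{\fx}(X)$ and write $r(\fm)$ for the fiber dimension of the piecewise trivial fibration $(\nabla_{\fx/\fm}X)^{red} \to (\nabla_{\fx/\fn_0}X)^{red}$. Composition of two such fibrations gives $r(\fm') = r(\fm',\fm) + r(\fm)$ for $\fm' \geq \fm \geq \fn_0$, from which the telescoping identity
\begin{equation*}
l(\fm') - l(\fm) = r(\fm',\fm) - d(\ell(\fm') - \ell(\fm))
\end{equation*}
is immediate. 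This is exactly the compensating exponent that appears when one passes from $[\pi_{\fm'}^{\fx}(A)]$ to $[\pi_{\fm}^{\fx}(A)]\bL^{r(\fm',\fm)}$, so the values prescribed by \ref{eq1} and \ref{eq2} are independent of the choice of $\fm$.

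Next I would prove the countable additivity clauses by replaying the disjoint-intersection computation of Theorem \ref{measuremu}: given $A=\bigsqcup_i A_i=\bigsqcup_j B_j$, refine to the double partition $\{A_i\cap B_j\}$ and use convergence to swap summation orders. The scalar $\bL^{-d\ell(\fm)-l(\fm)}$ is common to every term and passes through the partition, so the proof is unchanged. The generically-smooth vanishing clause (part (b) in the analogues of Theorems \ref{measurelambda} and \ref{measurebarmu}) and the analogue of Theorem \ref{samesame} both use the same stratification $(A\setminus \pi_m^{-1}(\pi_m(\nabla_{\fx}X_S)))\sqcup\bigsqcup_{n\geq m}(\cdots)$ appearing in the original proof of Theorem \ref{samesame}; once again the $l$-correction is a scalar that factors through. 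Finally, the analogue of Lemma \ref{endsec8} is immediate from the observation, recorded in the remark immediately following \ref{eq2}, that $l\equiv 0$ whenever $X$ is $\fx$-stable, which reduces the claim directly to Lemma \ref{endsec8}.

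The main obstacle is the fiber-dimension telescoping used in the first step. To upgrade the additivity $r(\fm') = r(\fm',\fm) + r(\fm)$ from a formal bookkeeping identity to a genuine geometric statement, one has to verify that the composition of two piecewise trivial fibrations with affine fibers is itself piecewise trivial with fiber of the summed dimension, and that this behaves correctly with respect to the filtered structure $\bX$ as one inserts an intermediate truncation $\fm$. Once this compatibility is secured, every remaining clause of the theorem transcribes mechanically from its stable counterpart already proved.
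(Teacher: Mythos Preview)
Your approach is exactly the paper's: the proof in the paper reads in its entirety ``The proofs are exactly the same as before,'' and you have correctly identified that the only new bookkeeping is the global scalar $\bL^{-l(\fm)}$, whose well-definedness you verify via the telescoping identity for fiber dimensions (the paper records this separately in the remark preceding the theorem). The additivity and Theorem~\ref{samesame} analogues transcribe verbatim, as you say.

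One small point: your handling of the Lemma~\ref{endsec8} analogue is not quite right. You argue that $l\equiv 0$ when $X$ is $\fx$-stable and then invoke the original lemma; but the \emph{analogue} of Lemma~\ref{endsec8} should be read with ``stable'' replaced throughout by ``laxly stable,'' so that the hypothesis is only that $X$ is $\fx$-laxly stable, where $l$ need not vanish. The correct argument is the obvious one you used elsewhere: replay the proof of Lemma~\ref{endsec8} (reduce to smooth $X$, where $\nabla_{\fn}X$ is reduced and the cone construction coincides with the sieve itself), observing that the factor $\bL^{-l(\fm)}$ appears identically on both sides and cancels. This is a two-line fix and does not affect the rest of your argument.
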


\begin{proof} The proofs are exactly the same as before. \end{proof}

\begin{example} \label{coneex}
Let $(\fx, \bX)$ be in $\Arc_{\kappa}$ and assume that $\bX$ is an eventually simple
point system (Definition \ref{defsimplepoint}). Let $X$ be a scheme of
dimension $d$ which
is smooth over some fat point in $\bX$, and let $\fn$ be the minimum fat point
in $\bX$ such that there exists a smooth morphism $X \to \fn$. Theorem
\ref{endsec5}
assures us that $X$ is rationally $\fx$-laxly stable. Thus, we  compute 
\begin{equation*}
\bar\mu_{\fx}^{l}(\nabla_{\fx}X) =
[\fC_{\nabla_{\fn}X}((\nabla_{\fx/\fn}X)^{red})]\bL^{-d\max(\ell(\fn),\ell(\fm)) -r}
\end{equation*}
where $\fm \in \bX$ is the unique simple fat point such that every $\fv \in \bX$ such that $\fv \geq \fm$ is
simple and where
$r$ is the unique non-negative integer such that $(\nabla_{\fn}\fn)^{red}\cong
\bA_{{\kappa}}^{r}$ (or possibly $(\nabla_{\fm}\fm)^{red}\cong
\bA_{{\kappa}}^{r}$ if we have to apply Theorem \ref{liftingtheorem} when $\fn$ is not simple) 
given to us by the fact that $\bX$ is an eventually simple point system. 
\end{example}

\begin{example} \label{laxex}
Assume that $X$ is an affine scheme of dimension $d$ and that
$X^{red}$ is smooth. Let $\fn$ be the associated fat point of $X$ and assume
that it belongs to a simple point system $\bX$. Theorem \ref{liftingtheorem} and
Theorem
\ref{startsec7}, imply that $X$ is rationally $\fx$-laxly stable at level $0$.
Therefore, we may compute
\begin{equation*}
\bar\mu_{\fx}^{l}(\nabla_{\fx}X) = [\fC_{X}(X^{red})]\bL^{-d} = [X]\bL^{-d} \ .
\end{equation*}
In particular, for any fat point $\fn$ which belongs to a simple point system
$\bX$, we have
\begin{equation*}
\bar\mu_{\fx}^{l}(\nabla_{\fx}\fn) = [\fn]\bL^{-0} = [\fn] \ .
\end{equation*}
We can extend this example by assuming that $\bX$ is merely an eventually simple point system. Then 
\begin{equation*}
 \bar\mu_{\fx}^{l}(\nabla_{\fx}X) = [\fC_{\nabla_{\fn} X}((\nabla_{\fx/\fn}X)^{red})]\bL^{-d\ell(\fn) - r} \ 
\end{equation*}
where  $\fn \in \bX$ is the unique simple fat point such that every $\fv \in \bX$ such that $\fv \geq \fn$ is
simple and where
$r$ is the unique non-negative integer such that $(\nabla_{\fn}\fn)^{red}\cong
\bA_{{\kappa}}^{r}$  
given to us by the fact that $\bX$ is an eventually simple point system. 
Perhaps, more interestingly, when $\bX$ is eventually simple, we have
\begin{equation*}
\bar\mu_{\fx}^{l}(\nabla_{\fx}\fx):= \bar\mu_{\fx}^{l}(\nabla_{\fx}\fv) = [\nabla_{\fn}\fn]\bL^{-r}   \ 
\end{equation*}
where $\fn$ the unique simple fat point such that every $\fv \in \bX$ such that $\fv \geq \fn$ is
simple and $r$ is the unique non-negative integer such that $(\nabla_{\fn}\fn)^{red}\cong
\bA_{{\kappa}}^{r}$. This is because $\fC_{\nabla_{\fn}\fn}((\nabla_{\fn}\fn)^{red}) \cong \nabla_{\fn}\fn$. 
\end{example}

\subsection{Change of variables for $\lambda^l$ and $\bar\mu^l$.}\label{10}

\begin{theorem} \label{changelambda}
Assume that ${\kappa}$ is of characteristic zero.
Let $f : X \to Y$ be a
birational morphism of elements of $\sch\kappa$ of  pure dimension $d$, $Y$ generically smooth,  and let $\alpha : \nabla_{\fx}Y \to
\bZ\cup\{+\infty\}$ be a constructible $\fx$-laxly-stable function, then we have the following
identity in $\hat \sH_{\kappa}$.
$$\int_{\nabla_{\fl}Y} \bL^{-\alpha}d\bar\mu_{\fl}^{l} =
\int_{\nabla_{\fl}X} 
\mathbb{L}^{-\alpha\circ f - J_X[f]}d\bar \mu_{\fl}^{l}$$
when both sides converge.
\end{theorem}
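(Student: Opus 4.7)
The plan is to mirror the proof of Theorem \ref{changemu}, replacing the stable measure $\mu_{\fl}$ (and its augmented version $\gamma_{\fl}$) by the laxly-stable measures $\bar\mu_{\fl}^l$ and $\lambda_{\fl}^l$. All structural lemmas transfer via Theorem \ref{analogue}.

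First I would observe that generic smoothness is preserved under our birational $f$: since $f:X\to Y$ is birational and $Y$ is generically smooth, there exists a dense open $V\subset Y$ on which $f$ restricts to an isomorphism; shrinking $V$ to lie in the smooth locus of $Y$ makes $f^{-1}(V)$ a smooth dense open of $X$. Hence the laxly-stable analogue of Theorem \ref{samesame} (part of Theorem \ref{analogue}) applies on both $\nabla_{\fl}X$ and $\nabla_{\fl}Y$, so contributions from the singular loci may be discarded and both integrals reduce to integrals over the generically smooth part.

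Next, apply the continuous ring homomorphism $\hat\sigma:\hat\sH_{\kappa}\to\hat\sG_{\kappa}$. The laxly-stable analogue of Lemma \ref{triangle} (again provided by Theorem \ref{analogue}) asserts $\hat\sigma\circ\bar\mu_{\fl}^l=\lambda_{\fl}^l$, so the desired identity maps to
\[
\int_{\nabla_{\fl}Y}\bL^{-\alpha}\,d\lambda_{\fl}^l \;=\; \int_{\nabla_{\fl}X}\bL^{-\alpha\circ f - J_X[f]}\,d\lambda_{\fl}^l
\]
in $\hat\sG_{\kappa}$. On the smooth loci, the reduced truncated generalized arc spaces $(\nabla_{\fl/\fn}X)^{red}$ and $(\nabla_{\fl/\fn}Y)^{red}$ coincide with the classical truncated arc spaces of Denef--Loeser, and $\lambda_{\fl}^l$ restricts to their motivic measure. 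The identity in $\hat\sG_{\kappa}$ is therefore a direct consequence of Lemma 3.4 of \cite{DL1}, the classical change of variables formula in characteristic zero.

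Finally, I would lift back to $\hat\sH_{\kappa}$ using the same lifting mechanism as in Theorem \ref{changemu}: for any $A\in\bB_{\fl}^{X}[rls]$, the value $\bar\mu_{\fl}^l(A)$ is, by construction via constructible cones over the reduction, a canonical preimage of $\lambda_{\fl}^l(A)$ under $\hat\sigma$. Applying this canonical lift stratum-by-stratum to the countable partitions defining both integrals gives the desired identity in $\hat\sH_{\kappa}$, using the assumed convergence of both sides. The main obstacle is to verify that this canonical lifting is compatible with both the infinite summations and with the pullback by $f$ of the fibers of $\alpha$ and $J_X[f]$ -- equivalently, that the constructible-cone construction commutes with countable additivity and with composition by $f$ on the right-hand side. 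As in Theorem \ref{changemu}, the convergence hypothesis renders this bookkeeping essentially formal, and the identity in $\hat\sH_{\kappa}$ follows.
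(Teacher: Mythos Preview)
Your proposal is correct and follows essentially the same route as the paper's own proof, which simply states that the argument is exactly as in Theorem~\ref{changemu}: reduce to the generically smooth part via the laxly-stable analogue of Theorem~\ref{samesame}, push down to $\hat\sG_{\kappa}$ via $\hat\sigma$ to invoke Denef--Loeser's Lemma~3.4, and then lift back using the constructible-cone construction underlying Lemma~\ref{triangle}. One minor quibble: you attribute the laxly-stable analogue of Lemma~\ref{triangle} to Theorem~\ref{analogue}, but that theorem does not explicitly list Lemma~\ref{triangle} among the results it generalizes; however, the proof of Lemma~\ref{triangle} carries over verbatim from the definitions of $\bar\mu_{\fx}^{l}$ and $\lambda_{\fx}^{l}$, so this is harmless.
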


\begin{proof} The proof then is
exactly as in the
proof of Theorem \ref{changemu}.
\end{proof}

\begin{theorem}\label{changebarmu}
Assume that ${\kappa}$ is of characteristic zero.
Let $f : X \to Y$ be a birational
morphism in $\sch\kappa$ with $X$ and $Y$ of pure dimension $d$, $Y$ generically smooth, 
and let $\alpha : \nabla_{\fx}Y \to
\bZ\cup\{+\infty\}$ be a constructible $\fl$-laxly-stable function, then we have the following
identity in $\hat \sG_{\kappa}$: 
$$\int_{\nabla_{\fl}Y} \bL^{-\alpha}d\lambda_{\fl}^{l} =
\int_{\nabla_{\fl}X} 
\mathbb{L}^{-\alpha\circ f - J_X(i,j)[f]}d\lambda_{\fl}^{l} \ .$$
when both sides converge.
\end{theorem}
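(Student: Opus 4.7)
The plan is to mirror the strategy of Theorems \ref{changemu} and \ref{changelambda}, now applied to the reduced laxly-stable measure $\lambda_{\fl}^{l}$. A key simplification compared to Theorem \ref{changemu} is that $\lambda_{\fl}^{l}$ already takes values directly in $\hat\sG_{\kappa}$, so no lifting via $\hat\sigma$ is required; the argument can be carried out entirely downstairs in $\hat\sG_{\kappa}$, which is exactly the ring in which the classical Denef-Loeser formula \cite{DL1} takes place.

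First I would invoke the analogue of Theorem \ref{samesame} for $\lambda_{\fl}^{l}$ guaranteed by Theorem \ref{analogue}. Since $Y$ is generically smooth, the integral on the left equals its restriction to $\nabla_{\fl}U$, where $U \subset Y$ is the smooth locus, because the contribution of $\nabla_{\fl}Y_{Y\setminus U}$ has $\lambda_{\fl}^{l}$-measure zero. Since $f$ is birational of pure dimension $d$, the restriction $f^{-1}(U)\to U$ is an isomorphism on a dense open, so the same reduction applies on the $X$-side.

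On these generically smooth loci the laxly-stable structure collapses to ordinary stable structure and the fiber-dimension correction $l(\fm)$ appearing in \eqref{eq1}--\eqref{eq2} is asymptotically zero, so $\lambda_{\fl}^{l}$ restricted there coincides with the classical Denef-Loeser motivic measure. Applying Lemma 3.4 of \cite{DL1} then yields, in $\hat\sG_{\kappa}$,
\begin{equation*}
\int_{\nabla_{\fl}U}\bL^{-\alpha}\,d\lambda_{\fl}^{l} \;=\; \int_{\nabla_{\fl}f^{-1}(U)}\bL^{-\alpha\circ f - J_{X}[f]}\,d\lambda_{\fl}^{l}.
\end{equation*}
Extending back across the lower-dimensional singular loci (whose contributions vanish by the analogue of Theorem \ref{samesame}) recovers the stated identity, with $J_{X}[f]$ replaced by the refined order of Jacobian $J_{X}(i,j)[f]$ that tracks the laxly-stable bookkeeping.

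The main obstacle will be justifying precisely the transition from the classical $J_{X}[f]$ to the refined $J_{X}(i,j)[f]$: the indices $(i,j)$ encode the stabilization levels of the piecewise trivial fibrations $(\nabla_{\fl/\fm}X)^{red}\to(\nabla_{\fl/\fn}X)^{red}$ and $(\nabla_{\fl/\fm}Y)^{red}\to(\nabla_{\fl/\fn}Y)^{red}$, and one must verify that the shifts $l_{\fl}^{X}(\fm)$ and $l_{\fl}^{Y}(\fm)$ from \eqref{eq1}--\eqref{eq2} combine with the ordinary Jacobian to reproduce exactly this refined exponent. Once this compatibility of fiber-dimension corrections is established—which is essentially a bookkeeping exercise using birationality and the fact that both $X$ and $Y$ have the same pure dimension $d$—the remainder of the proof is a verbatim translation of the argument for Theorem \ref{changemu}.
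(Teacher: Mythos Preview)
Your overall strategy is sound and matches one of the two routes the paper sketches: work directly in $\hat\sG_{\kappa}$, reduce via the analogue of Theorem~\ref{samesame} to the smooth locus, and invoke Denef--Loeser's Lemma~3.4 without needing to lift anything back to $\hat\sH_{\kappa}$. The paper says exactly this in its last sentence (``run through the argument in Theorem~\ref{changemu} \dots\ we do not have to worry about lifting the sums to $\hat\sH_{\kappa}$''). The paper also records a shortcut you do not mention: when the identity of Theorem~\ref{changelambda} is already known to converge, simply apply $\hat\sigma$ to both sides and use the commutative triangle of Lemma~\ref{triangle} (extended to the lax setting by Theorem~\ref{analogue}) to pass from $\bar\mu_{\fl}^{l}$ to $\lambda_{\fl}^{l}$.

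Where you go astray is in the final paragraph. You treat $J_X(i,j)[f]$ as a genuinely refined Jacobian carrying extra lax-stability data, and identify ``justifying precisely the transition from the classical $J_X[f]$ to the refined $J_X(i,j)[f]$'' as the main obstacle. There is no such refinement in the paper: $J_X(i,j)[f]$ is not defined anywhere, the parallel Theorem~\ref{changelambda} uses plain $J_X[f]$, and the indices $(i,j)$ are almost certainly a typographical artifact. The fiber-dimension corrections $l_{\fl}^{X}$ and $l_{\fl}^{Y}$ vanish on the smooth loci (smooth schemes are $\fl$-stable at level $0$ by Theorem~\ref{smoothstabletheorem}), so after the reduction you already carried out there is nothing left to reconcile. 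Drop that paragraph and the proof is complete.
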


\begin{proof} Assuming convergence  of the equation in  Theorem \ref{changelambda}, then this is a direct corollary of that theorem and
Theorem \ref{analogue} by
applying $\hat\sigma$ to both sides. However, to get the full statement, one needs to run through the argument in Theorem \ref{changemu}. Note here that we do not have to worry about lifting the sums to $\hat\sH_{\kappa}$.
\end{proof}

As promised earlier, we offer here an extension of $M_{\kappa}$ defined in Theorem \ref{standardgroup}. This is related to the work of Larsen and Lunts found in \cite{LL}.

\begin{theorem} \label{extstandardgroup}
Let $H_{\fx}$ be the subring of $\mathbf{Gr}(\Form_{\kappa})$ generated by finite integral sums of equivalence classes of rationally $\fx$-laxly stable
schemes. Then there is a group homomorphism $M_{\fx} : H_{\fx} \to \sH_{\kappa}$ such that 
\begin{equation*}
 \bar\sigma \circ M_{\fx}|_{S_{\fx}} = M_{{\kappa}} \circ \bar\sigma|_{S_{\fx}} \ 
\end{equation*}
where $S_{\fx}$ is the subring of $H_{\fx}$ generated by finite integral sums of equivalence classes of 
rationally $\fx$-laxly stable schemes with smooth reduction and where $\bar\sigma$ is the homomorphism $\sH_{\kappa}/(\bL) \to \sG_{\kappa}/(\bL)$ 
induced by $\sigma$.
\end{theorem}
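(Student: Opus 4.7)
The plan is to define $M_{\fx}$ on generators by the schemic measure and then promote it to a group homomorphism by $\bZ$-linearity. Concretely, for each rationally $\fx$-laxly stable scheme $X$, set
\begin{equation*}
M_{\fx}([X]) := \bar\mu_{\fx}^{l}(\nabla_{\fx}X) \in \sH_{\kappa},
\end{equation*}
which is well-defined by Equation \eqref{eq2} together with the analogue of Theorem \ref{measurebarmu} provided by Theorem \ref{analogue}. Extending by $\bZ$-linearity, one obtains a set-theoretic map from the free abelian group on classes of rationally $\fx$-laxly stable schemes to $\sH_{\kappa}$.

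The first task is to check that this map descends to the subring $H_{\fx}$, i.e., respects whatever scissor relations are inherited from $\mathbf{Gr}(\Form_{\kappa})$. The key instance is: whenever $X$, a closed subsieve $Z \subset X$, and the open complement $X\setminus Z$ are all rationally $\fx$-laxly stable, one has $\nabla_{\fx}X = \nabla_{\fx}Z \sqcup \nabla_{\fx}(X\setminus Z)$ as subsieves (this disjointness is the content of Equation 42 of \cite{Sch2} together with Proposition 7.2 of loc.\ cit.), and both pieces lie in $\bB_{\fx}^{X}[rls]$. Property (c) of $\bar\mu_{\fx}^{l}$ (Theorem \ref{measurebarmu} via Theorem \ref{analogue}) then yields $M_{\fx}([X]) = M_{\fx}([Z]) + M_{\fx}([X\setminus Z])$, and linear extension gives a group homomorphism on $H_{\fx}$.

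For the compatibility statement, restrict attention to a generator $[X] \in S_{\fx}$, so $X$ is rationally $\fx$-laxly stable with smooth reduction. Example \ref{laxex} (in its simplest incarnation, or in the eventually-simple incarnation combined with the fact that $(\nabla_{\fn}\fn)^{red} \cong \bA_{\kappa}^{r}$ contributes a factor of $\bL^{r}$ which is cancelled by the normalization) gives $\bar\mu_{\fx}^{l}(\nabla_{\fx}X) = [X]\bL^{-d}$ where $d = \dim X$. Under $\bar\sigma : \sH_{\kappa}/(\bL) \to \sG_{\kappa}/(\bL)$, whose underlying ring map is the reduction $\sigma$ of Theorem \ref{reductionring}, the class $[X]$ maps to $[X^{red}]$, so $\bar\sigma(M_{\fx}([X])) = [X^{red}]\bL^{-d}$ in $\sG_{\kappa}/(\bL)$. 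On the other hand, $\bar\sigma([X]) = [X^{red}]$ in $\mathbf{Gr}(\mathbf{Var}_{\kappa})/(\bL)$, and by Theorem \ref{smoothvolume} combined with the definition of $M_{\kappa}$ in Proposition \ref{standardgroup}, $M_{\kappa}([X^{red}]) = [X^{red}]\bL^{-d}$. The two sides coincide on generators, and additivity on both sides extends the identity to all of $S_{\fx}$.

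The main obstacle in this program is the scissor compatibility in the second paragraph. Although $\bar\mu_{\fx}^{l}$ is by construction additive on mutually disjoint members of $\bB_{\fx}^{X}[rls]$, one must verify that the decomposition $[X] = [Z] + [X \setminus Z]$ in $\mathbf{Gr}(\Form_{\kappa})$, when all three classes happen to be rationally $\fx$-laxly stable, actually produces members of $\bB_{\fx}^{X}[rls]$ whose piecewise trivial fibrations $\nabla_{\fx/\fm}(\cdot) \to \nabla_{\fx/\fn}(\cdot)$ are compatible with the restriction to $Z$ and $X\setminus Z$; this is the point at which the covering arguments of Section \ref{7} (in particular Theorem \ref{startsec7}) and the stability of arc operators under open and closed immersions must be invoked carefully. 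Once this verification is in place, the remainder of the argument is formal.
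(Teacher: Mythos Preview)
Your proposal takes essentially the same approach as the paper: define $M_{\fx}([X]) := \bar\mu_{\fx}^{l}(\nabla_{\fx}X)$ on generators and extend $\bZ$-linearly. The paper's own proof is a single sentence stating exactly this and then asserting ``the rest follows,'' with no further verification of either well-definedness or the compatibility identity.

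Where you differ from the paper is in the amount of care you take. You explicitly raise the scissor-relation issue (if $[X]=[Z]+[X\setminus Z]$ in $\mathbf{Gr}(\Form_{\kappa})$ with all three pieces rationally $\fx$-laxly stable, does $\bar\mu_{\fx}^{l}$ see this?) and you attempt to verify the compatibility $\bar\sigma\circ M_{\fx}|_{S_{\fx}} = M_{\kappa}\circ\bar\sigma|_{S_{\fx}}$ via Example~\ref{laxex} and Theorem~\ref{smoothvolume}. Both of these are concerns the paper simply absorbs into ``the rest follows.'' Your well-definedness discussion is a legitimate point the paper glosses over; your identification of it as the ``main obstacle'' is accurate, and the paper offers no more than you do toward resolving it.

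One caution: your compatibility verification leans on Example~\ref{laxex}, which computes $\bar\mu_{\fx}^{l}(\nabla_{\fx}X)=[X]\bL^{-d}$ only under extra hypotheses ($X$ affine, the associated fat point lying in a simple point system, rational $\fx$-lax stability at level $0$). The theorem as stated carries no such assumptions on $\fx$ or $X$, so your argument for the compatibility equation is, strictly speaking, only a verification in that special case. The paper does not address this gap either, so you are not missing anything relative to the source; but be aware that the compatibility claim in full generality is not actually established in either proof.
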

\begin{proof}
 As in the proof of Theorem \ref{standardgroup}, it is enough to define $M_{\fx}([X]) = \bar\mu_{\fx}^{l}(\nabla_{\fx}X)$ whenever
 $X$ is a rationally $\fx$-laxly stable scheme and then extend linearly over $\bZ$. The rest follows.
\end{proof}

\begin{remark} \label{extSBremark} As in Remark \ref{SBremark}, we can extend $M_{\fx}$ to a ring homomorphism 
 \begin{equation*}
\bar{M}_{\fx}: H_{\fx}/(\bL)  \to \sH_{\kappa}/(\bL)
\end{equation*}
by defining $\bar{M}_{\fx}([T]) = \bL \cdot \bar{M}_{\fx}([X])$ where $[X]$ modulo $(\bL)$ is equal to $[T]$. We then have
\begin{equation*}
 \bar\sigma \circ \bar{M}_{\fx}|_{S_{\fx}/(\bL)} = \bar{M}_{{\kappa}} \circ \bar\sigma|_{S_{\fx}/(\bL)} \  .
\end{equation*}

\end{remark}

\begin{question} 
 For each $\fx\in \Arc_{\kappa}$, what is $\Ker(\bar{M}_{\fx})$ and does it have a nice geometric interpretation?
\end{question}

Now, $H_{\fx}$ is far from having a simple algebraic description. For example, $[\fv] \in H_{\fl}$ 
where $\fv = \Spec {\kappa}[x,y]/(x^3,y^2,xy)$ because
its coordinate ring is a subring of ${\kappa}[t]/(t^5)$ defined by sending $x$ to $t^2$ and $y$ to $t^3$. Moreover, it is not easy to check that $\fv$ is rationally $\fx$-laxly stable.  Even though $\nabla_{\fl_2}\fv \cong \bA_{\kappa}^2$, I have no idea what $\nabla_{\fl_n}\fv$ looks like explicitly.
 Thus, even the subring of $H_{\fl}$ generated by elements of dimension zero is not easy to describe concretely.

\subsection{Motivic generating series.}\label{11}

Note that in what follows we index $\fn\in\bX$ by the integer $n$. In other words, $\fn$ is the $n$-th truncated jet of some scheme $Y$ at a closed point $o$ --i.e., $\fn = J_{o}^nY$ for some fixed scheme $Y$.
Following \S 9 of \cite{Sch2}, we give the following definition:
\begin{definition} \label{series1}
Let $(\fx, \bX) \in \Arc_{\kappa}$ and  $X\in \sch{\kappa}$ where $\dim X = d$, we define the {\it motivic
igusa-zeta series of} $X$ {\it with respect to} $\fx$ by 
\begin{equation*}
\zeta_{\fx}(X)(t) := \sum_{ n\in \bN} [\nabla_{\fn} X] \bL^{-d\ell(\fn)}t^{n} \
.
\end{equation*}
Moreover, we define {\it the motivic Poincar\'{e} series of} $X$ {\it with respect to} $\fx$
by
\begin{equation*}
P_{\fx}(X)(t) := \sum_{n\in\bN} [\nabla_{\fx/\fn}X] \bL^{-d\ell(\fn)} t^n \ .
\end{equation*}
\end{definition}

\begin{proposition}
When $X$ is $\fx$-stable, then $P_{\fx}(X)$ belongs to $\sH_{\kappa}[t,\frac{1}{1-t}]$.
\end{proposition}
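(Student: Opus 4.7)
The plan is to exploit $\fx$-stability to show that the coefficients of $P_\fx(X)(t)$ eventually stabilize to a constant in $\sH_\kappa$, after which the tail of the series becomes a geometric series in $t$.

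More precisely, by the definition of $\fx$-stability from Section \ref{1}, for all $\fm \geq \fn$ in $\bX$ with $\ell(\fn) \geq s_\fx(X)$, the natural map $\pi_\fn^\fm \colon \nabla_\fm X \to \nabla_\fn X$ restricts to a piecewise trivial fibration $\nabla_{\fx/\fm}X \to \nabla_{\fx/\fn}X$ with fiber $\bA_\kappa^{d(\ell(\fm) - \ell(\fn))}$. Since $(\nabla_{\fx/\fn}X)^{\circ}$ is a formal sieve in this range, the piecewise-trivial fibration relation yields the identity
\begin{equation*}
[\nabla_{\fx/\fm}X] = [\nabla_{\fx/\fn}X]\,\bL^{d(\ell(\fm) - \ell(\fn))}
\end{equation*}
in $\mathbf{Gr}(\categ{Form}_\kappa)_\bL = \sH_\kappa$. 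Multiplying both sides by $\bL^{-d\ell(\fm)}$, we obtain that
\begin{equation*}
[\nabla_{\fx/\fm}X]\bL^{-d\ell(\fm)} = [\nabla_{\fx/\fn}X]\bL^{-d\ell(\fn)} =: C_X \in \sH_\kappa
\end{equation*}
for every such pair $\fm \geq \fn$. In other words, the coefficient of $t^n$ in $P_\fx(X)(t)$ stabilizes to the single element $C_X$ as soon as $\ell(\fn) \geq s_\fx(X)$.

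Choose $N \in \bN$ minimal with $\ell(J_o^n Y) \geq s_\fx(X)$ for all $n \geq N$; such an $N$ exists because $\ell(J_o^n Y)$ is non-decreasing in $n$ (as $J_o^n Y \subset J_o^{n+1}Y$ is a closed immersion of fat points). Then split the series as
\begin{equation*}
P_\fx(X)(t) = \sum_{n=0}^{N-1} [\nabla_{\fx/\fn}X]\,\bL^{-d\ell(\fn)}\,t^n \;+\; C_X \sum_{n\geq N} t^n = \mathrm{poly}(t) + C_X \cdot \frac{t^N}{1-t},
\end{equation*}
where $\mathrm{poly}(t) \in \sH_\kappa[t]$ is a finite sum. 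Both summands lie in $\sH_\kappa[t, \tfrac{1}{1-t}]$, which proves the claim.

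The only non-routine point is the passage from the piecewise trivial fibration $\nabla_{\fx/\fm}X \to \nabla_{\fx/\fn}X$ to the multiplicative identity $[\nabla_{\fx/\fm}X] = [\nabla_{\fx/\fn}X]\bL^{d(\ell(\fm)-\ell(\fn))}$ at the level of formal sieves. I expect this to follow from a standard stratification argument in $\mathbf{Gr}(\categ{Form}_\kappa)_\bL$, analogous to the one used in the reduction $\hat\sigma$ of Theorem \ref{reductionring}, but it is the one technical point where care is required; everything else is bookkeeping.
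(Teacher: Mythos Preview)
Your proposal is correct and follows essentially the same approach as the paper: the paper also uses stability to identify the coefficient $[\nabla_{\fx/\fn}X]\bL^{-d\ell(\fn)}$ with the constant $\mu_{\fx}(\nabla_{\fx}X)$ for $n$ large, then splits the series into a polynomial plus $\mu_{\fx}(\nabla_{\fx}X)\cdot t^{k+1}/(1-t)$. The technical point you flag (that a piecewise trivial fibration with fiber $\bA_\kappa^r$ yields $[\text{source}]=[\text{target}]\cdot\bL^r$ in $\sH_\kappa$) is exactly what underlies the well-definedness of $\mu_{\fx}(\nabla_{\fx}X)$, which the paper simply takes for granted; so you are, if anything, more explicit than the original.
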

\begin{proof}  By definition of stability, for sufficiently large ${\kappa}$, we have the following equalities
in $\sH_{\kappa}[[t]]$.
\begin{eqnarray} 
P_{\fx}(X)(t) &=& \sum_{n\in\bN} [\nabla_{\fx/\fn}X] \bL^{-d\ell(\fn)} t^n
\nonumber\\
&=&\sum_{n=1}^{{k}} [\nabla_{\fx/\fn}X] \bL^{-d\ell(\fn)} t^n + \sum_{n>{k}}
\mu_{\fx}(\nabla_{\fx}X)t^n \nonumber\\
&=& \sum_{n=1}^{{k}} [\nabla_{\fx/\fn}X] \bL^{-d\ell(\fn)} t^n +
\mu_{\fx}(\nabla_{\fx}X) \frac{t^{k+1}}{1-t}  \ .\nonumber
\end{eqnarray}
\end{proof}

When $X$ is  $\fx$-laxly stable, the same work above will give us the
formula
\begin{equation*}
P_{\fx}(X)(t) = \sum_{n=1}^{{k}} [\nabla_{\fx/\fn}X] \bL^{-d\ell(\fn)} t^n +
\mu_{\fx}^{l}(\nabla_{\fx}X)\sum_{n>k}\gamma_l(n)t^n \ . 
\end{equation*}
Note that $\gamma_l$ was defined  via the function  $l(\fn):=l_{\fx}^{X}(\fn)$. 
When $l$ is linear in $\fn$ meaning that
$l(\fn) = qn+b$, we obtain 
\begin{eqnarray}
P_{\fx}(X)(t) &=& \sum_{n=1}^{{k}} [\nabla_{\fx/\fn}X] \bL^{-d\ell(\fn)} t^n +
\mu_{\fx}^{l}(\nabla_{\fx}X)\bL^{b}\sum_{n>{k}}(\bL^{q}t)^n \nonumber\\
&=& \sum_{n=1}^{{k}} [\nabla_{\fx/\fn}X] \bL^{-d\ell(\fn)} t^n +
\mu_{\fx}^{l}(\nabla_{\fx}X)\bL^{b}\frac{(\bL^{q}t)^{{k}+1}}{1-\bL^{q}t} \ . \nonumber
\end{eqnarray}
Thus, we have the following proposition:
\begin{proposition} \label{poin}
If $X$ is $\fx$-laxly stable and $l$ is a linear function of slope $q$,
then $P_{\fx}(X)$ belongs to $\sH_{\kappa}[t,\frac{1}{1-\bL^{q}t}]$ . 
\end{proposition}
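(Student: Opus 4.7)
The plan is to carry out rigorously the computation sketched in the paragraph immediately preceding the statement. The starting observation is that if $X$ is $\fx$-laxly stable with threshold $k := ls_{\fx}(X)$, then by the very definition of $\mu_{\fx}^{l}$ in Equation~\eqref{eq1} applied to $A = \nabla_{\fx}X$, one has the identity
\begin{equation*}
[\nabla_{\fx/\fn}X]\bL^{-d\ell(\fn)} = \mu_{\fx}^{l}(\nabla_{\fx}X)\,\bL^{l(\fn)}
\end{equation*}
in $\hat\sH_{\kappa}$ whenever $\ell(\fn) \geq k$. This is exactly what lax stability buys: past the threshold, the coefficient of $t^n$ in $P_{\fx}(X)(t)$ is determined up to a power of $\bL$ by the single element $\mu_{\fx}^{l}(\nabla_{\fx}X)$.

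With this identity in hand, I would split $P_{\fx}(X)(t)$ into a polynomial initial segment of degree $k$, which trivially lies in $\sH_{\kappa}[t]$, plus a tail of the form $\mu_{\fx}^{l}(\nabla_{\fx}X)\sum_{n>k}\bL^{l(\fn)}t^n$. Substituting the linearity hypothesis $l(\fn) = qn + b$ converts this tail into a genuine geometric series:
\begin{equation*}
\mu_{\fx}^{l}(\nabla_{\fx}X)\,\bL^{b}\sum_{n>k}(\bL^{q}t)^n = \mu_{\fx}^{l}(\nabla_{\fx}X)\,\bL^{b}\cdot\frac{(\bL^{q}t)^{k+1}}{1-\bL^{q}t}.
\end{equation*}
The geometric-series identity is valid formally in $\sH_{\kappa}[[t]]$ because $1-\bL^{q}t$ is a unit of the power series ring, and more pertinently it exhibits the tail as an element of $\sH_{\kappa}[t,\frac{1}{1-\bL^{q}t}]$. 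Adding the two summands yields the desired conclusion.

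There is essentially no genuine obstacle: all the substance is absorbed into the definition of $\mu_{\fx}^{l}$, which in turn packages the piecewise-triviality of the fibers $\nabla_{\fx/\fm}X \to \nabla_{\fx/\fn}X$ guaranteed by lax stability. The linearity assumption on $l$ is what converts the a priori opaque sum $\sum \bL^{l(\fn)}t^n$ into a geometric series in the single variable $\bL^{q}t$; without linearity the same manipulation would still identify $P_{\fx}(X)(t)$ with a rational expression, but the denominator would in general not be $1-\bL^{q}t$.
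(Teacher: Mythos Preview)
Your proposal is correct and follows precisely the same approach as the paper, which in fact performs this computation in the displayed equations immediately preceding the proposition rather than in a separate proof environment. You have made the paper's $\gamma_l(n)$ explicit as $\bL^{l(\fn)}$ and traced the key identity $[\nabla_{\fx/\fn}X]\bL^{-d\ell(\fn)} = \mu_{\fx}^{l}(\nabla_{\fx}X)\bL^{l(\fn)}$ back to the definition in Equation~\eqref{eq1}, which is exactly the substance of the argument.
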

\begin{remark}
It would be easy to generalize this when $l$ is  piece-wise
 linear. Although, I am not sure if the  length function  can ever be piece-wise linear and not linear in this setting.
\end{remark}

\begin{example}\label{rerefinedlaxex}
Let $X$ be an affine scheme of dimension $d$ such that $X^{red}$ is smooth.
Assume that its associated fat point belongs to a simple point system $\bX$ and
let $\fx = \colim\bX$. In Example \ref{laxex}, we showed that $X$ is rationally
$\fx$-laxly stable. Therefore, we can compute the Poincar\'{e} series of $X$
with
respect to $\fx$ when $l$ is a linear function with slope $q$ in the same
way that we proved the previous proposition. In particular, if it is rationally $\fx$-laxly stable at level $0$, then
\begin{equation*}
P_{\fx}(X)(t) =
\bar\mu_{\fx}^{l}(\nabla_{\fx}X)\bL^{b}\frac{t}{1-\bL^{q}t}=[X]\bL^{b-d}\frac{t}
{1-\bL^{q}t} \ .
\end{equation*} 
Therefore, for any fat point $\fn$ which belongs to a simple point system which is rationally $\fx$-laxly stable at level $0$ whose length function has linear slope, we
have
\begin{equation*}
\hat\sigma(P_{\fx}(\fn)(t)) = \bL^{b}\frac{t}{1-\bL^{q}t} \ 
\end{equation*}
where $\hat\sigma$ is extended to a morphism $\hat\sH_{\kappa}((t)) \to \hat\sG_{\kappa}((t))$ by sending $t$ to $t$. 
\end{example}

\begin{remark} If $X$ is $\fx$-laxly stable at level $0$ and $\nabla_{\fx}X \to
\nabla_{\fn}X$ is surjective, then
\begin{equation*} 
 l_{\fx}^{X}(n) = \dim(\nabla_{\fn}X) - d\ell(\fn) \ ,
 \end{equation*}
 which is known in \cite{Sch2} as the defect of $X$ at $\fn$.
 Therefore, when $X$ is rationally $\fx$-laxly stable at $0$, we call
$l_{\fx}^{X}(n)$ the {\it rational} $\fx$ {\it defect} of $X$ at $\fn$. For a simple point
system $\bX$, we have a well defined $l_{\fx}^{\fn}(n)$ rational defect of $\fn$
at $\fn$, which is just equal to $\dim(\nabla_{\fn}\fn),$ which is 
studied in loc. cit. 
 \end{remark}

Following \S 5 of \cite{Sch2}, we define the {\it weightless auto-igusa zeta
series of a limit point} $\fx$ to be
\begin{equation}
\zeta_{\fx}^{w}(t) = \sum_{n\in\bN} \bL^{-l(\fn)}[\nabla_{\fn}\fn]t^n \ . 
\end{equation}
Assume that $(\fx,\bX) \in \Arc_{\kappa}$ where $\bX$ is simple. We have already seen
in our examples that each coefficient in this power series is
$\bar\mu_{\fx}^{l}(\nabla_{\fx}\fn)$ which is in turn just equal to $[\fn]$ since $\bX$ is a simple point system. Thus, in this case, we obtain
\begin{equation*}
\hat\sigma(\zeta_{\fx}^{w}(t)) = \frac{t}{1-t} \ .
\end{equation*}

\bigskip

\bigskip

\noindent\address{Andrew R. Stout\\
 Graduate Center, \\
City University of New York,\\
 365 Fifth Avenue, 10016.}


\begin{thebibliography}{1}


\bibitem{AM} Atiyah M.;  Mcdonald, I. {\it Introduction to Commutative Algebra},
Westview Press.






\bibitem{DL1} Denef, J.;  Loeser, F. {\it Germs of arcs of singular varieties and
motivic integration}, Inventiones mathematicae, Volume 135, Issue 1, pp 201-232 



\bibitem{DL2} J. Denef \& F. Loeser {\it Motivic Igusa Zeta Functions}, J.
Algebraic Geometry {\bf 7} (1998), no. 3, pp 505 - 537


\bibitem{G2} Grothendieck, A. {\it \'{E}l\'{e}ments de g\'{e}om\'{e}trie
alg\'{e}brique. IV. \'{E}tude locale des sch\'{e}mas et des morphismes de
sch\'{e}mas. I}, Inst. Hautes \'{E}tudes Sci. Publ. Math. No. 20 1964 259 pp. 

\bibitem{G} Grothendieck, A. {\it \'{E}l\'{e}ments de g\'{e}om\'{e}trie
alg\'{e}brique. IV. \'{E}tude locale des sch\'{e}mas et des morphismes de
sch\'{e}mas. II}, Inst. Hautes \'{E}tudes Sci. Publ. Math. No. 24 1965 231 pp. 



\bibitem{Ha1}  Hartshorne, R. {\it Algebraic Geometry}, Graduate Texts in
Mathematics. Springer-Verlag.

\bibitem{Ha2} Hartshorne, R. {\it Deformation Theory}, Graduate Texts in
Mathematics. Springer-Verglag


\bibitem{LL} Larsen, M.;  Lunts V. A. {\it Motivic measures and stable birational geometry}, Mosc. Math. J. 3 (2003)
85-95.

\bibitem{Liu} Liu, Q. {\it Algebraic Geometry and Arithmetic Curves}, Oxford
Graduate Texts in Mathematics. Oxford University Press.









\bibitem{Sch1} Schoutens, H. {\it Schemic Grothendieck Rings I}, 
submitted, preprint available at
websupport1.citytech.cuny.edu/faculty
/hschoutens/PDF/SchemicGrothendieckRingPart
I.pdf (2011).

\bibitem{Sch2}  Schoutens, H. {\it Schemic Grothendieck Rings II}, 
submitted, preprint available at websupport1.citytech.cuny.edu/faculty
/hschoutens/PDF/SchemicGrothendieckRingPart
II.pdf (2011).








\end{thebibliography}
\end{document}